\begin{document}

\newtheorem{thm}{Theorem}
\newtheorem*{thm*}{Theorem}
\newtheorem{cor}[thm]{Corollary}
\newtheorem{lemma}[thm]{Lemma}
\newtheorem{prop}[thm]{Proposition}
\theoremstyle{definition}
\newtheorem{quest}[thm]{Question}
\newtheorem{ex}[thm]{Example}

\newcommand{\linnum}{\stepcounter{thm}\tag{\thethm}}

\newcommand{\bN}{\mathbb N}
\newcommand{\bR}{\mathbb R}
\newcommand{\cA}{\mathcal{A}}
\newcommand{\cC}{\mathcal{C}}
\newcommand{\cD}{\mathcal{D}}
\newcommand{\cE}{\mathcal{E}}
\newcommand{\cN}{\mathcal{N}}
\newcommand{\cO}{\mathcal{O}}
\newcommand{\cR}{\mathcal{R}}
\newcommand{\cS}{\mathcal{S}}
\newcommand{\cU}{\mathcal{U}}
\newcommand{\cV}{\mathcal{V}}
\newcommand{\cW}{\mathcal{W}}
\newcommand{\cY}{\mathcal{Y}}
\newcommand{\tx}{\tilde{x}}
\newcommand{\tv}{\tilde{v}}
\newcommand{\wtD}{{\widetilde{D}}}
\newcommand{\wtgamma}{\widetilde{\gamma}}
\newcommand{\wtphi}{\widetilde{\phi}}
\newcommand{\wtU}{{\widetilde{U}}}
\newcommand{\wtV}{{\widetilde{V}}}
\newcommand{\za}{\alpha}
\newcommand{\zb}{\beta}
\newcommand{\zf}{\phi}
\newcommand{\zg}{\gamma}
\newcommand{\zd}{\delta}
\newcommand{\zh}{\eta}
\newcommand{\zj}{\psi}
\newcommand{\zl}{\lambda}
\newcommand{\zm}{\mu}
\newcommand{\zn}{\nu}
\newcommand{\zp}{\pi}
\newcommand{\zr}{\rho}
\newcommand{\zs}{\sigma}
\newcommand{\zS}{\Sigma}
\newcommand{\zt}{\tau}
\newcommand{\zv}{\varphi}
\newcommand{\zJ}{\Psi}
\newcommand{\zG}{\Gamma}
\newcommand{\zL}{\Lambda}
\newcommand{\lcm}{\mbox{lcm}}
\newcommand{\co}{\colon\thinspace}
\newcommand{\SR}{S_{\mathcal{R}}}
\newcommand{\SRp}{S_{\mathcal{R'}}}
\newcommand{\expm}{\varphi}
\newcommand{\subm}{\sigma_{\mathcal{R}}}
\newcommand{\defn}{\emph}
\newcommand{\Chat}{\widehat{\mathbb C}}
\newcommand{\Otilde}{\widetilde{\mathcal{O}}}

\newcommand{\gap}{\vspace{5pt}}      
\newcommand{\mtwo}[4]                
{\mbox{$\left(\begin{array}{cc}      
#1 & #2 \\
#3 & #4
\end{array}
\right)$}}

\newcommand{\pf}{\noindent {\bf Proof: }}
\newcommand{\id}{\mbox{\rm id}}
\newcommand{\interior}{\mbox{int}}   
\newcommand{\diam}{\mbox{\rm diam}}
\newcommand{\dist}{\mbox{\rm dist}}
\newcommand{\per}{\mbox{\rm per}}
\newcommand{\qedspecial}[1]{\nopagebreak \begin{flushright}
        \rule{2mm}{2.5mm}{\bf #1} \end{flushright}}

\newcommand{\nosubsections}{\renewcommand{\thethm}{\thesection.\arabic{thm}}
           \setcounter{thm}{0}}

\newcommand{\bdry}{\partial}

\title{Expansion properties for finite subdivision rules I}

\author{W. J. Floyd}
\address{Department of Mathematics\\ Virginia Tech\\
Blacksburg, VA 24061\\ U.S.A.}
\email{floyd@math.vt.edu}
\urladdr{http://www.math.vt.edu/people/floyd}

\author{W. R. Parry}
\address{Department of Mathematics\\ Eastern Michigan University\\
Ypsilanti, MI 48197\\ U.S.A.}
\email{walter.parry@emich.edu}

\author{K. M. Pilgrim}
\address{Department of Mathematics\\ Indiana University\\
Bloomington, IN 47405\\ U.S.A.}
\email{pilgrim@indiana.edu}

\keywords{finite subdivision rule, expanding map, postcritically
finite, Thurston map}
\subjclass[2000]{Primary 37F10, 52C20; Secondary 57M12}
\date\today
\maketitle

\begin{abstract}
Among Thurston maps (orientation-preserving, postcritically finite
branched coverings of the 2-sphere to itself), those that arise as
subdivision maps of a finite subdivision rule form a special family.
For such maps, we investigate relationships between various notions of
expansion---combinatorial, dynamical, algebraic, and
coarse-geometric.
\end{abstract}

\tableofcontents

\centerline{\dedicatory{Dedicated to the memory of William P. Thurston}}

\section{Introduction}\label{sec:intro}

This is the first in a series of papers devoted to expansion
properties of Thurston maps and of finite subdivision rules (fsr's).

It is a robust principle in discrete-time dynamical systems that
expanding maps are determined, up to topological conjugacy, by a
finite amount of combinatorial data.  For example, the topological
conjugacy class of a smooth expanding map $g: S^1 \to S^1$ is
determined by its degree.  Also, in many settings, homotopy or other
similar classes of maps which contain smooth expanding maps can be
identified combinatorially, and shown to contain smooth (indeed,
algebraic) models.  For example, a continuous map $f: S^1 \to S^1$ is
homotopic to an expanding map $g$ if and only if $|\deg(f)|>1$; in
this case one may take $g$ as $x \mapsto \deg(f)x \bmod 1$.  Similar
results hold for torus maps $f: T^2 \to T^2$ and indeed much more
general results hold, cf.  \cite[Theorem 3]{S}.  The expanding maps
$g$ are necessarily covering maps.

In one real dimension, multimodal interval maps $f: I \to I$, studied
in detail by Milnor and Thurston \cite{MT}, provide a natural class of
noninvertible maps with local branching.  Their {\em kneading theory}
provides a rich set of combinatorial invariants.  In this setting
also, one may formulate a notion of combinatorial classes of maps, and
one may characterize those classes containing expanding maps as well.
The presence of local branch (turning) points means that the notion of
expanding needs to be properly formulated.  Up to topological
conjugacy, the subclass of such maps $g$ with eventually periodic
turning points---called {\em postcritically finite} maps---is
countable.  A general result then implies that each postcritically
finite expanding map $g$ is topologically conjugate to an algebraic
model---a piecewise linear map where the absolute value of the slope
is constant.

In two real dimensions, Thurston \cite{DH}, \cite{Th} generalized the
kneading theory to orientation-preserving postcritically finite
branched coverings $f: (S^2,P) \to (S^2,P)$; here, $P$ is the set of
forward orbits of branch points, and is required to be finite.  These
maps $f$ are now often called {\em Thurston maps} for brevity.  The
central result---sometimes referred to as the Fundamental Theorem of
Complex Dynamics---is (i) a combinatorial characterization of rational
functions $g$ among Thurston maps, and (ii) a rigidity result, which
asserts that the holomorphic conjugacy class of a rational Thurston
map $g$ is (with a well-understood class of exceptions) uniquely
determined by its combinatorics.  The second of these results says
that in addition to the topological dynamics, the conformal structure
on $(S^2,P)$ left invariant by $g$ is encoded in the combinatorics of
$g$.  At around the same time, Cannon \cite{cannon:rmt} was
considering the problem of the hyperbolization of certain
three-manifolds.  His approach was to formulate what is now called
{\em Cannon's conjecture}: a Gromov hyperbolic group $G$ with $\bdry
G$ homeomorphic to $S^2$ acts geometrically on hyperbolic three-space.
Bonk and Kleiner \cite{BK} reinterpreted this in the following way:
the natural quasiconformal structure on $\bdry G$ determined by the
family of so-called visual metrics is, conjecturally, equivalent to
that of the Euclidean metric.

The presence of a natural combinatorial structure on $\bdry G$
generated much interest in the problem of characterizing its
quasisymmetry class through asymptotic combinatorial means.  These
structures were highly reminiscent of cellular {\em Markov
partitions} occuring in dynamics, and these began to be studied in
their own right \cite{fsr, ratsub, expi, expii, subrat}.  Now called
{\em finite subdivision rules}, these often take the form of
a Thurston map whose inverse branches refine some tiling of $S^2$; see
\S 2.  Such Thurston maps are, a priori, special among general
Thurston maps.  Along the way, various notions of topological
expansion and of combinatorial expansion were formulated for Thurston
maps, and relationships among these notions were established \cite{BM,
fsr, virt, HP2}. In independent remarkable developments, it was shown
that an expanding rational Thurston map and, more generally, an
expanding Thurston map, has the property that some iterate is the
subdivision map of an fsr  \cite{subrat, BM}.

It became recognized that bounded valence finite subdivision
rules---those for which there is an upper bound on the valence of
$0$-cells independent of the number of levels of subdivision---behave
more regularly than their unbounded valence counterparts.  In this
context, an fsr has unbounded valence if and only if the Thurston map
defining its subdivision rule has a periodic branch point.  For
example, the unbounded valence barycentric subdivision rule admits two
natural subdivision maps.  One is a rational map for which there
exists a fixed critical point and for which the diameters of the tiles
at level $n$ do not tend to zero as $n \to \infty$.  The other is a
piecewise affine map, for which the diameters of the tiles do tend to
zero.  This distinction led these authors to the realization that the
formulation of a reasonable notion of combinatorial expansion for
Thurston maps with periodic branch points is subtle.

In this first work, we suppose $f: S^2 \to S^2$ is a Thurston map
which is the subdivision map of an fsr, and we investigate properties
of $f$, focusing on themes related to notions of expansion.  We
discuss combinatorial, group-theoretic, dynamical, and coarse
geometric properties.

Here is an outline and a summary of our main results. A heads-up to
the reader: in smooth dynamics, it is common to focus on forward
iterates and expansion.  But in our setting, it is often more
convenient to consider backward iterates and contraction. So below,
one should read `expanding' as `forward iterates are expanding' and
`contracting' as `backward iterates are contracting'. For example, the
map $x \mapsto 2x$ modulo $1$ is both expanding and contracting.

Throughout, we assume $f$ is the subdivision map of a finite
subdivision rule $\cR$ .

\S 2 introduces basic terminology related to Thurston maps and fsr's. 

\S 3 formulates several distinct notions of combinatorial expansion
for fsr's, and systematically studies the subtleties mentioned above.
These notions are local and are defined in terms of tile types.  We
discuss their relationships, and give examples and
counterexamples. While many of these notions coincide in the case of
bounded valence, in general, this is not the case. We define a
particular form of combinatorial expansion, here denoted {\em Property
(CombExp)}, and single it out as the one which is best suited for our
purpose of characterizing various equivalent notions.

 \S 4 quickly recalls the construction of group-theoretic invariants
associated to $f$, focusing on the associated {\em virtual
endomorphism} $\phi_f$ of the orbifold fundamental group.

 \S 5 introduces the notion of a {\em combinatorially contracting}
(CombContr) fsr.  Unlike the local notions introduced in \S 3,
combinatorially contracting is a global notion.  Our first main
result, Proposition \ref{prop:cmblexpn}, gives sufficient local
combinatorial conditions for an fsr to be combinatorially contracting.
Corollary \ref{cor:cmblexpn} then implies that property (CombExp)
implies (CombContr).  Examples in \S 3 show that the converse fails,
however.

 \S 6 presents results related to algebraic notions:
\begin{itemize}
\item
\noindent{\bf Theorem 6.2}: {\em Let $f\co S^2\to S^2$ be a Thurston 
map which is Thurston equivalent to the subdivision map of a finite
subdivision rule.  Then there exist
\begin{enumerate}
  \item nonnegative integers $a$ and $b$;
  \item a choice of orbifold fundamental group virtual endomorphism
$\zf\co G_p\dashrightarrow G_p$ associated to $f$;
  \item a finite generating set for the orbifold fundamental group
$G_p$ with associated length function $\left\|\cdot \right\|$
\end{enumerate}
such that
  \begin{equation*}
\left\|\zf^n(g)\right\|\le a \left\|g\right\|+bn
  \end{equation*}
for every nonnegative integer $n$ and $g\in \text{dom}(\zf^n)$.  If
the subdivision map takes some tile of level 1 to the tile of level 0
which contains it, then we may take $b=0$.  If there is only one tile
of level 0, then we may take $a=1$.}\medskip

\item \noindent{\bf Theorem 6.5}: {\em $\cR$ is contracting if and
only if $\phi_f$ is contracting.} 
\end{itemize}

Theorem~\ref{thm:expsep} shows that if $\cR$ has bounded valence, then
property (CombExp) is equivalent to property (M0Comb), mesh
approaching 0 combinatorially.  Corollary~\ref{cor:cmblexpn} shows
that property (CombExp) implies that $\cR$ is contracting.
Combining these results with Theorem~\ref{thm:cntrn} implies that if
$\cR$ has bounded valence and the mesh of $\cR$ approaches 0
combinatorially, then $\zf_f$ is contracting.  This is the main result
of \cite{virt}.

\begin{itemize}
\item {\bf Theorem 6.7:} {\em If $f$ is combinatorially equivalent to
a map $g$ that, outside a neighborhood of periodic branch points, is
expanding with respect to some length metric, then $\cR$ is
contracting.}
\end{itemize}

In \S\S 7 and 8, we turn our attention to coarse (large-scale)
geometric notions. Associated to $f$ are two natural infinite
1-complexes.  The {\em fat path subdivision graph} joins a tile to
its parent by a vertical edge and to its edge-adjacent neighbors by a
horizontal edge. The {\em selfsimilarity complex} is built from the
iterated monodromy of $f$ under the action of the orbifold fundamental
group.\medskip

\noindent {\bf Theorem 7.4:} {\em If $\cR$ is contracting, then its
fat path subdivision graph is Gromov hyperbolic.}\medskip

For the proof, we use some general results of Rushton \cite{R}.  These
lead to a condition on $\cR$ which is necessary and sufficient for the
fat path subdivision graph to be Gromov hyperbolic.  This necessary
and sufficient condition is very similar to but strictly weaker than
the condition that $\cR$ is contracting.\medskip

\noindent{\bf Theorem 8.1:} {\em The fat path subdivision complex and 
(any) selfsimilarity complex are quasi-isometric.}\medskip

\S 9 begins with a discussion of dynamical plane obstructions for an
fsr to be contracting.  The most natural candidate is a so-called {\em
Levy obstruction}.  It is easy to see that the existence of Levy
obstructions implies that $\phi_f$ is not contracting, and hence that
$\cR$ is not contracting.  However, we do not know if Levy
obstructions are the only obstructions. We conclude with some related
open questions.  \\

\subsection{Acknowledgement} We express heartfelt gratitude to Jim
Cannon, who helped us in many ways.  Without him this paper would not
exist. K. Pilgrim was supported by Simons grant 245269.

\section{Basic definitions}\label{sec:defns}\nosubsections

A continuous map $f\co S^2 \to S^2$ is a \defn{branched covering} if
$f$ is orientation preserving and for each $x\in S^2$ there are local
charts about $x$ and $f(x)$ (sending $x$ and $f(x)$ to $0$) in which
$f$ becomes the map $z\mapsto z^{\deg_x(f)}$ for some positive integer
$\deg_x(f)$.  The \defn{critical set} of $f$ is $\Omega_f = \{x:
\deg_x(f) > 1\}$ (this is the set of points at which $f$ is not
locally injective) and the \defn{postcritical set} is $P_f = \cup_{n >
0} f^n(\Omega_f)$. The mapping $f$ is called \defn{postcritically
finite} (or \defn{critically finite}) or a \defn{Thurston map} if
$P_f$ is finite.  Two Thurston maps $f$ and $g$ are
\defn{combinatorially equivalent} or \defn{Thurston equivalent} if
there are orientation-preserving homeomorphisms $h_0, h_1\co (S^2,P_f)
\to (S^2,P_g)$ such that $h_0 \circ f = g\circ h_1$ and $h_0$ and
$h_1$ are isotopic rel $P_f$.  \gap

A \defn{preimage} of a connected set $U$ under $f$ is a connected
component of $f^{-1}(U)$.  Let $\cU_0$ be a finite open cover of $S^2$
by connected sets.  Inductively define finite open covers $\cU_n$ of
$S^2$ by setting $\cU_{n+1}$ to be the cover whose elements are
preimages of elements of $\cU_n$. Following Ha\"issinsky-Pilgrim
\cite{HP1}, a Thurston map $f$ is said to \defn{satisfy Axiom
[Expansion] with respect to $\cU_0$} if for every open cover $\cY$ of
$S^2$, there exists $N \in \bN$ such that for all $n \geq N$, every
element of $\cU_n$ is contained in an element of $\cY$. We say that
$f$ \defn{satisfies Axiom [Expansion]} if it satisfies Axiom
[Expansion] with respect to some such cover $\cU_0$.  Equipping $S^2$
with e.g. the spherical metric, letting $\epsilon>0$ be arbitrary, and
taking $\cY$ to be a cover by $\epsilon$ balls shows that Axiom
[Expansion] is equivalent to the assertion that there exists a finite
cover by connected open sets whose iterated preimages under $f^ n$
have diameters tending to zero uniformly as $n\to \infty$. \gap

A \defn{finite subdivision rule} $\cR$ consists of a finite
2-dimensional CW complex $\SR$ which is the union of its closed
2-cells, a subdivision $\cR(\SR)$ of $\SR$, and a continuous cellular
map $f\co \cR(\SR) \to \SR$ whose restriction to each open cell is a
homeomorphism.  Furthermore, for each closed 2-cell $\tilde{t}$ of
$\SR$ there are i) a cell structure $t$ on the 2-disk $D^2$ ($t$ is
called the \defn{tile type} of $\tilde{t}$) such that the 1-skeleton
of $t$ is $\partial D^2$ and $t$ has at least three vertices and ii) a
continuous surjection $\psi_t\co t \to \tilde{t}$ (called the
\defn{characteristic map} of $\tilde{t}$) whose restriction to each
open cell is a homeomorphism. If $\cR$ is a finite subdivision rule, a
2-dimensional CW complex $X$ is an $\cR$-complex if it is the union of
its closed 2-cells, and there is a continuous cellular map $g\co X\to
\SR$ whose restriction to each open cell is a homeomorphism.  In this
case, for each positive integer $n$ there is a subdivision $\cR^n(X)$
of $X$ with associated map $f^n\circ g$.

When the underlying space of $S_\cR$ is homeomorphic to the 2-sphere
$S^2$ and $f$ is orientation preserving (for concreteness, we consider
only this case), $f$ is a Thurston map. Since $\SR$ is an
$\cR$-complex, we get a recursive sequence $\{\cR^n(\SR)\}$ of
subdivsions of $\SR$.  For each positive integer $n$, $f$ is a
cellular map from the $n^{\textrm{th}}$ to the $(n-1)^{\textrm{st}}$
subdivision.  Thus, we may speak of \defn{tiles} (which are closed
2-cells), \defn{edges}, and \defn{vertices} at \defn{level} $n$.  The
subdivision rule $\cR$ has \defn{bounded valence} if there is a
uniform upper bound on the valence of vertices of $\{\cR^n(\SR)\}$.
It is important to note that formally a finite subdivision rule is
\defn{not} a combinatorial object, since the map $f$, which is part of
the data, is assumed given. In other words: as a dynamical system on
the 2-sphere, the topological conjugacy class of $f$ is well defined.

Two finite subdivision rules $\cR, \cR'$ with subdivision maps $f$,
$f'$ are \defn{weakly isomorphic} if there is a cellular isomorphism
$h: S_\cR \to S_{\cR'}$ such that $h$ is cellularly isotopic to a
cellular isomorphism $g: S_\cR \to S_{\cR'}$ such that $f'\circ g = h
\circ f$.  In the case of finite subdivision rules on the sphere,
$\cR$ weakly isomorphic to $\cR'$ implies that $f$ and $f'$ are
combinatorially equivalent as Thurston maps. The finite subdivision
rules $\cR, \cR'$ are \defn{isomorphic} provided there is a cellular
isomorphism $h: \cS_\cR \to \cS_{\cR'}$ with $f' \circ h = h \circ
f$. That is, $f$ and $f'$ are topologically conjugate as dynamical
systems via a cellular isomorphism.  \gap

\section{Expansion properties for finite subdivision rules}
\label{sec:exp}\nosubsections

We are interested here in various definitions of combinatorial
expansion for finite subdivision rules.  We begin with some
definitions of separation and expansion properties.  Let $\cR$ be a
finite subdivision rule.
\begin{enumerate}
\item[(Esub)] $\cR$ is \emph{edge subdividing} if whenever $e$ is an
edge in $\SR$, there is a positive integer $n$ such that $e$ is
properly subdivided in $\cR^n(\SR)$.
\item[(Esep)] $\cR$ is \emph{edge separating} if whenever
$t$ is a tile type of $\cR$ and $e$ and $e'$ are disjoint edges
of $t$, there is a positive integer $n$ such that no tile of
$\cR^n(t)$ contains subedges of $e$ and of $e'$.
\item[(VEsep)] $\cR$ is \emph{vertex-edge separating} if whenever $t$
is a tile type, $u$ is a vertex of $t$, and $e$ is an edge of $t$ not
containing $u$, there exists a positive integer $n$ such that no tile
of $\cR^n(t)$ contains both $u$ and an edge contained in $e$.
\item[(Vsep)] $\cR$ is \emph{vertex separating} if
whenever $t$ is a tile type and $u$ and $v$ are distinct vertices of $t$,
there exists a positive integer $n$ such that no tile of $\cR^n(t)$
contains both $u$ and $v$.
\item[(Sexp)] $\cR$ is \emph{subcomplex expanding} if whenever $X$ is
an $\cR$-complex and $M > 0$, then there exists a natural number $N$
such that if $Y$ and $Z$ are disjoint subcomplexes (not necessarily
unions of tiles) of $X$ then the edge-path distance in $\cR^n(X)$
between $Y$ and $Z$ is greater than $M$ if $n\ge N$.
\item[(M0)] $\cR$ has \emph{mesh approaching 0} if given an open cover
$\cU$ of $S_\cR$ there exists a positive integer $n$ such that every
tile of $\cR^n(S_\cR)$ is contained in an element of $\cU$.
\item[(M0comb)] $\cR$ has \emph{mesh approaching 0 combinatorially} if
it is edge subdividing and edge separating.  There is a finite
algorithm which detects if $\cR$ satisfies (M0comb); see \cite[Theorem
6.1]{virt}.
\item[(M0weak)] $\cR$ has \emph{mesh approaching 0 weakly} if $\cR$ is
weakly isomorphic to a finite subdivision rule $\cR'$ such that $\cR'$
satisfies (M0).  
\item[(CombExp)] $\cR$ is \emph{combinatorially expanding} if it is
edge separating, vertex-edge separating, and vertex separating.  The
argument given for \cite[Theorem 6.1]{virt} adapts immediately to
yield a finite algorithm which detects if $\cR$ satisfies (CombExp);
the bound on the level of subdivisions needed to be considered is
indeed the same as for the check of (M0comb); see Theorem
\ref{thm-complexity} below.
\end{enumerate}

We are interested in the relationships between these definitions.
\begin{itemize}
\item The first four---(Esub), (Esep), (VEsep) and (Vsep)---of these
nine properties might be called ``primary'' because Theorem 3.5 below
will show that the last five can be defined in terms of these.
\item Example 3.1 shows that (Vsep) does not follow from the other
three primary properties.
\item Example 3.2 shows that (VEsep) does not follow from the other
three primary properties.
\item Example 3.3 shows that (Esep) does not follow from the other
three primary properties.
\item It is easy to see that (Vsep) implies (Esub) (See Theorem 3.4.),
so Esub follows from the other three primary properties.
\end{itemize}

We next give several examples. In each example, the subdivision 
complex is a 2-sphere and the subdivision map is a Thurston map.
Each example is accompanied by a figure giving the
subdivisions of the tile types. For each example, the edge
labelings and their cyclic order determine the tile types of all
of the tiles, so the tile types are not specified.

\begin{ex}\label{ex:ex1} This finite subdivision rule shows
that (Vsep) does not follow from the other three primary properties.
Let $f(z)=1-1/z^2$. The postcritical set is $\{0,1,\infty\}$.  We
equip the Riemann sphere with a cell structure for which the
1-skeleton is the real axis together with $\infty$ and the set of
0-cells is the postcritical set.  The associated finite subdivision
rule $\cR_1$ has two tile types, and the subdivision complex
$S_{\cR_1}$ is a triangular pillowcase. The subdivisions of the tile
types are given in Figure~\ref{fig:ex1sub}. Since both tile types are
triangles, a tile cannot have disjoint edges and so (Esep) is
satisfied vacuously. Since the edges of each tile type are properly
subdivided in the third subdivision, $\cR_0$ also satisfies (Esub) and
hence (M0comb).  The vertex 1, which is contained in edges $a$ and
$b$, is separated from the edge opposite it in the first subdivision.
So vertex $\infty$ is separated from the edge opposite it in the
second subdivision.  Vertex 0 is separated from the edge opposite it
in the third subdivision.  So $\cR_1$ satisfies (VEsep).  However,
(Vsep) is not satisfied because the subdivision of each tile type
contains a tile of the same type with vertices 0, 1 and $\infty$.   
Note from Figure \ref{fig:ex1julia} that the immediate basins of
postcritical points have closures that meet.

\begin{figure}
\centerline{\includegraphics[height=2.5in]{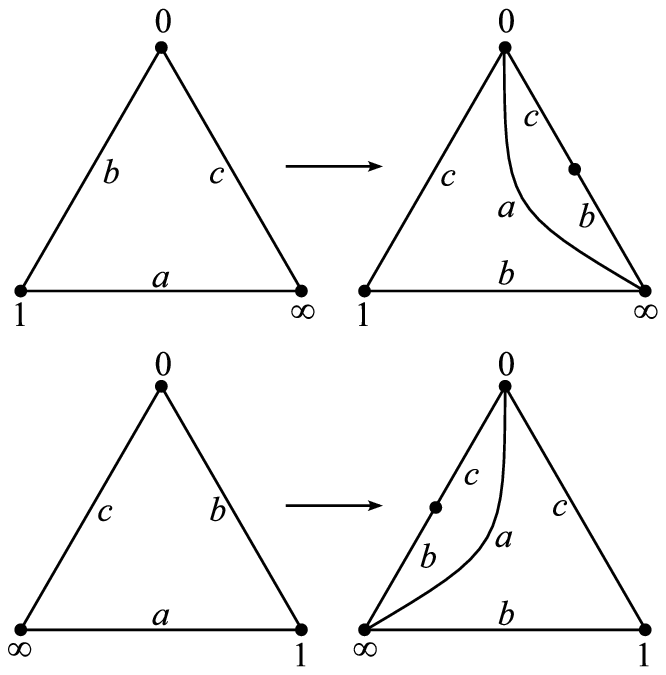}} 
\caption{Example 3.1: The subdivisions of the tile types for $\cR_1$} 
\label{fig:ex1sub}

\includegraphics[height=2.5in]{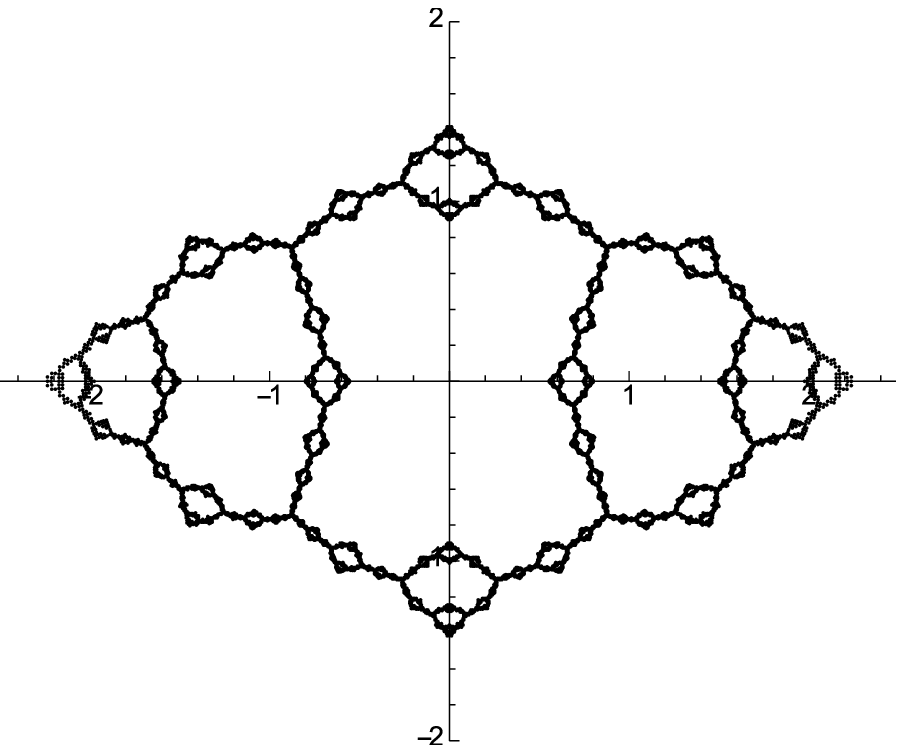}
\caption{Example 3.1: The
Julia set for the subdivision map of $\cR_1$.}
\label{fig:ex1julia}
\end{figure}

\end{ex}

\begin{ex}\label{ex:tr6} This finite subdivision rule $\cR_2$ shows
that (VEsep) does not follow from the other three primary
properties.  The subdivision complex is a triangular pillowcase. The
subdivisions of the two tile types are given in
Figure~\ref{fig:tr6sub}, and Figure~\ref{fig:tr6l14} (which was drawn
using CirclePack \cite{CP}) shows the first four subdivisions of the
quadrilateral $Q$ obtained by gluing the two triangles along the edge
labelled $c$.  $\cR_2$ satisfies (Esep) vacuously since the tile types
are triangles. For each tile type $t$, each edge of $t$ is properly
subdivided in $\cR_2(t)$ and so $\cR_2$ satisfies (Esub) and (M0comb).

This finite subdivision rule satisfies (Esub), (Esep), and (Vsep);
it does not satisfy (VEsep) or (Sexp). 
The subdivision map is realized by the rational map
$$f(z) = - \frac{(- i + \sqrt{3} + 8 i z -8 i z^2)^3}{96 \sqrt{3}
(-z + 5z^2 -8z^3 + 4z^4)}.$$
\end{ex}

\begin{figure}
\includegraphics[height=2.5in]{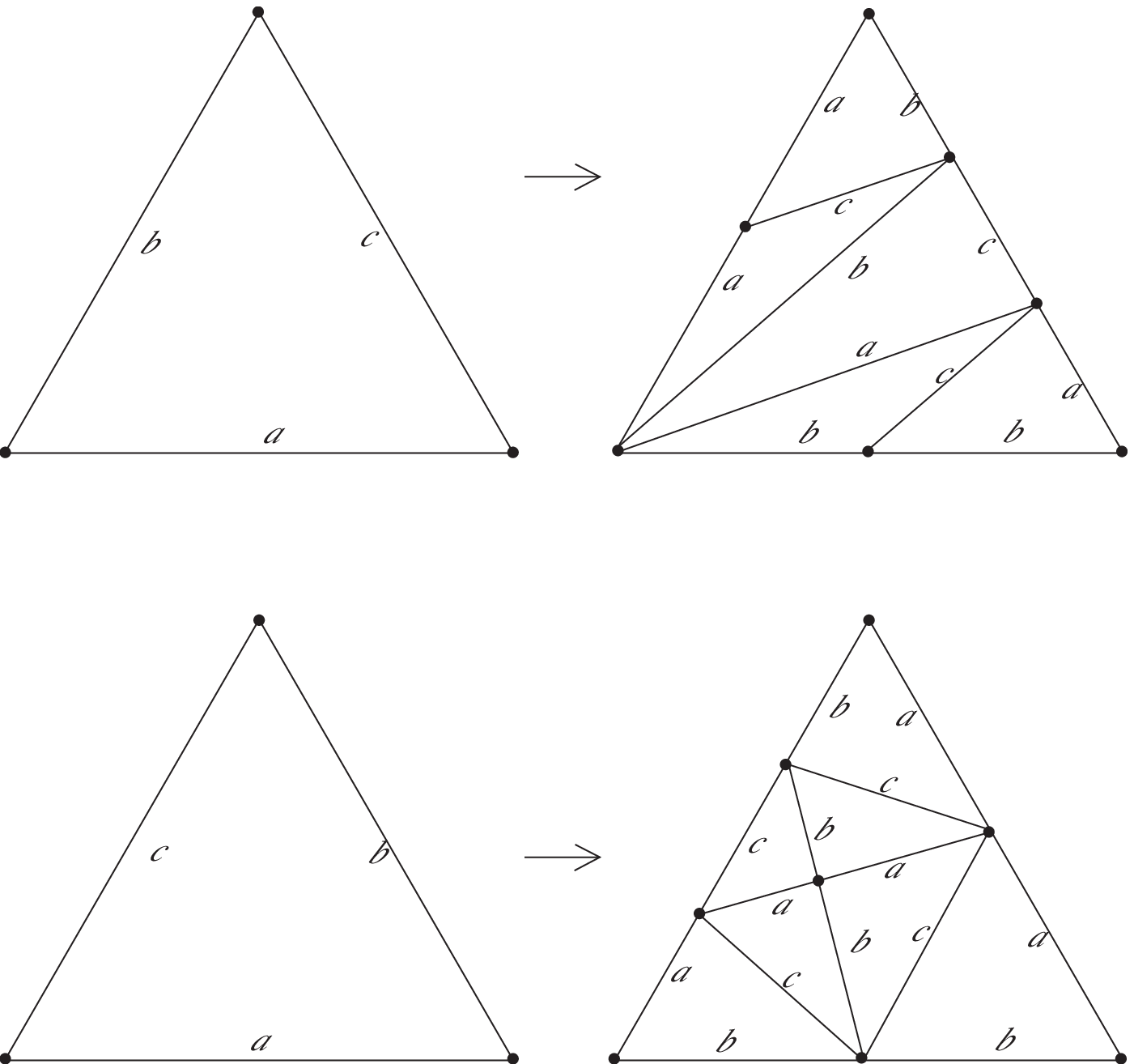} 
\caption{Example 3.2: the subdivisions of the tile types for $\cR_2$} 
\label{fig:tr6sub}

\includegraphics[height=2.5in]{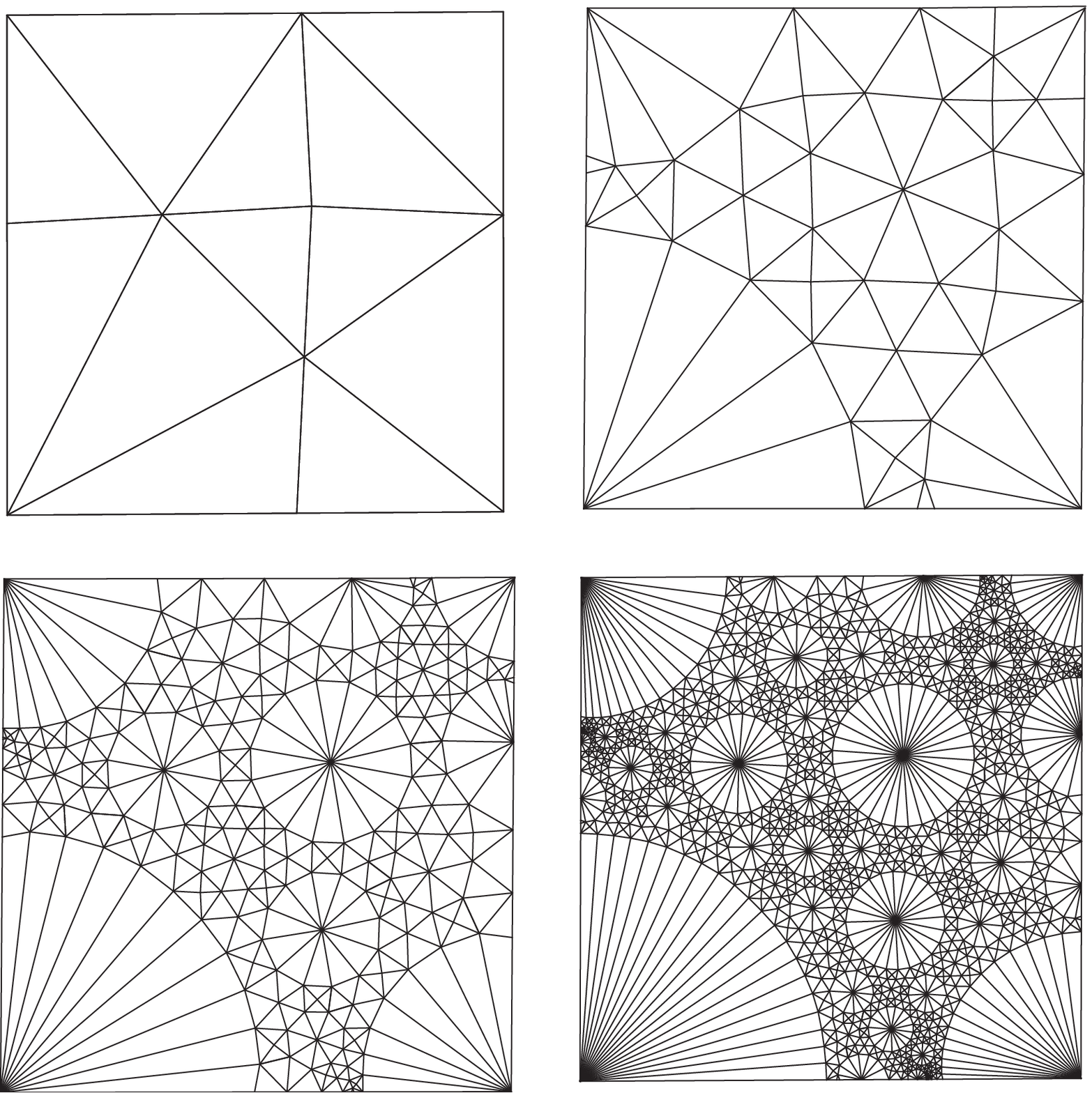} 
\caption{Example 3.2: $\cR_2^n(Q)$ for $n=1,2,3,4$}
\label{fig:tr6l14}
\end{figure}

\begin{ex}\label{ex:quad15} This finite subdivision rule $\cR_3$ shows
that (Esep) does not follow from the other three primary
properties. The subdivision rule has bounded valence and has
subdivision complex a rectangular pillowcase. The subdivisions of the
two tile types are given in Figure~\ref{fig:quad15sub}, and
Figure~\ref{fig:quad15l1-3} (which was drawn using CirclePack
\cite{CP}) shows the first three subdivisions of one of the tile
types. It is clear from the subdivisions of the tile types that it
satisfies (Esub), (Vsep), and (VEsep).  Since for any $n$ and tile
type $t$ the complex $\cR_3^n(t)$ has a tile which contains subedges
of the edges of $t$ labelled $b$ and $d$, $\cR_3$ does not satisfy
(Esep) or (Sexp). It is not realizable by a rational map; a horizontal
curve gives a Thurston obstruction.
\end{ex}

\begin{figure}
\centering
\includegraphics[height=2.5in]{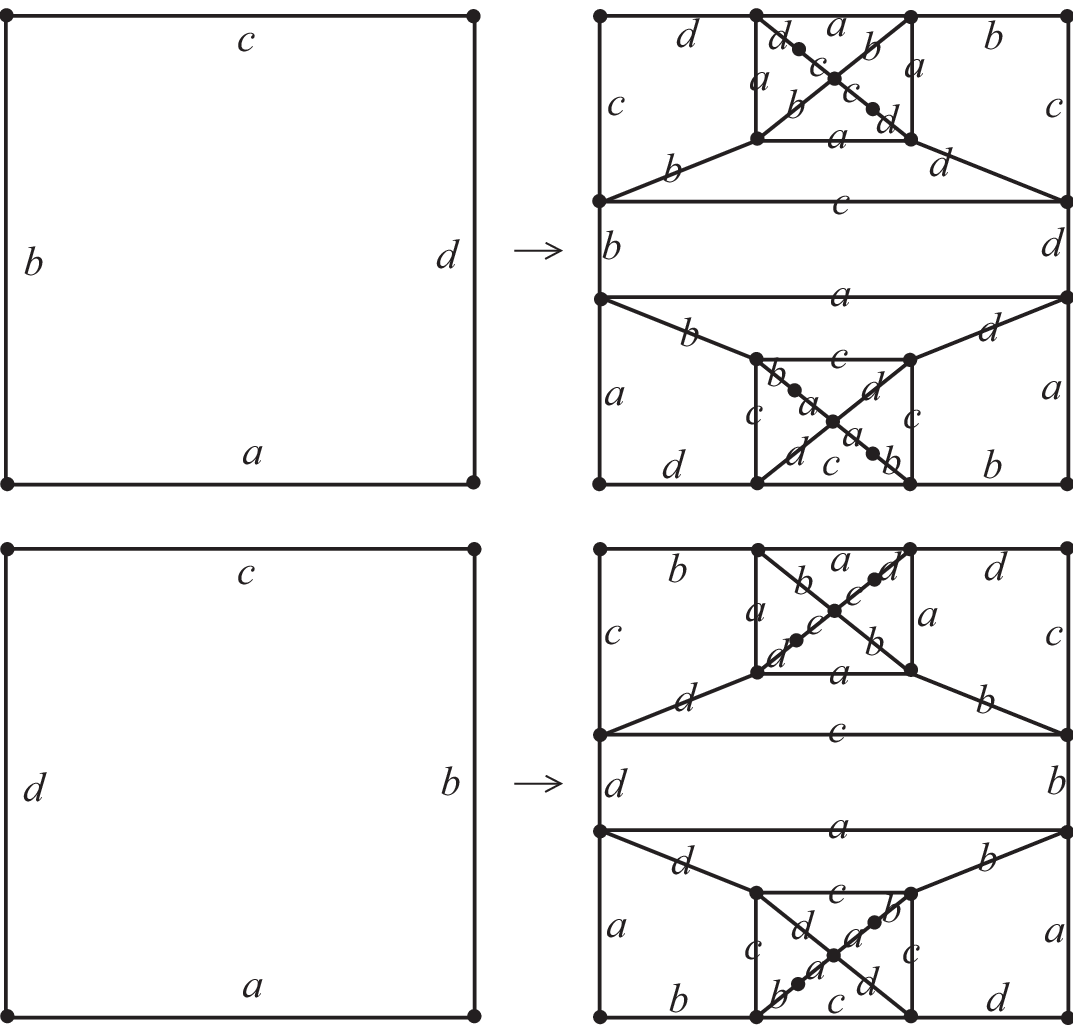}
\caption{Example 3.3: The subdivisions of the tile types for $\cR_3$} 
\label{fig:quad15sub}

\includegraphics[height=2.5in]{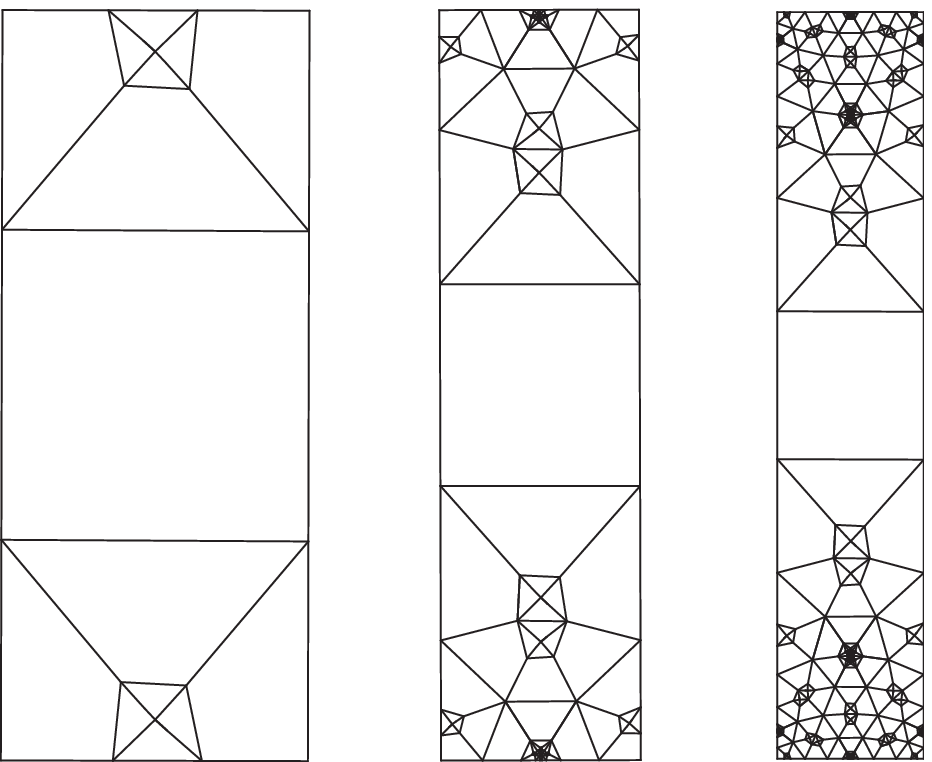}
\caption{Example 3.3: $\cR_3^n(t)$ for $n = 1,2,3$}
\label{fig:quad15l1-3}
\end{figure}

\begin{thm}\label{thm:expsep}
Let $\cR$ be a finite subdivision rule.  We have
\begin{enumerate}
  \item $\textrm{(Sexp)} \Rightarrow \textrm{(Esep)}$,
$\textrm{(Sexp)} \Rightarrow \textrm{(VEsep)}$, and $\textrm{(Sexp)}
\Rightarrow \textrm{(Vsep)} \Rightarrow \textrm{(Esub)}$. Moreover
$\textrm{(Combexp)}\Rightarrow \textrm{(Esub)}$ and so
$\textrm{(Combexp}) \Rightarrow \textrm{(M0comb)}$.
  \item $\textrm{(Combexp)} \Leftrightarrow \textrm{(Sexp)}$.
  \item $\textrm{(M0weak)} \Leftrightarrow \textrm{(CombExp)}$.  If in
addition $\cR$ has bounded valence, then $\textrm{(M0comb)}
\Leftrightarrow \textrm{(M0weak)} \Leftrightarrow \textrm{(CombExp)}$.
\end{enumerate}
\end{thm}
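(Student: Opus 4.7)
For (1), each of the implications from (Sexp) is obtained by specializing (Sexp) to the $\cR$-complex consisting of a single tile type $t$, taking $M=1$ and choosing the disjoint pair $(Y,Z)$ to be a pair of disjoint edges for (Esep), a vertex paired with a non-incident edge for (VEsep), or a pair of distinct vertices for (Vsep); the conclusion $d_{\cR^n(t)}(Y,Z)\geq 2$ forbids any tile from containing both. For (Vsep)$\Rightarrow$(Esub), any edge $e$ of $\SR$ is a face of some tile, and its two endpoints pull back via the characteristic map to distinct vertices of the tile type $t$; (Vsep) gives an $n$ for which no tile of $\cR^n(t)$ contains both preimages, forcing $e$ to be properly subdivided. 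The chain (CombExp)$\Rightarrow$(Esub)$\Rightarrow$(M0comb) follows since (CombExp) contains both (Vsep) and (Esep).

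For (2), the direction (Sexp)$\Rightarrow$(CombExp) follows from (1). For (CombExp)$\Rightarrow$(Sexp), I proceed in two steps. \emph{Step A} extracts a uniform base case. Since $\cR$ has finitely many tile types, each with finitely many disjoint pairs of boundary subcomplexes, (Esep), (VEsep) and (Vsep) together produce a single integer $N_1$ such that for every tile type $t$, every disjoint pair $(A,B)$ of subcomplexes of $t$, and every $n\geq N_1$, no tile of $\cR^n(t)$ meets both $A$ and $B$. For any $\cR$-complex $X$ with disjoint subcomplexes $Y,Z$, and any tile $\widetilde t$ of $X$ with characteristic map $\psi_t$, the pullbacks of $Y\cap\widetilde t$ and $Z\cap\widetilde t$ are disjoint subcomplexes of $t$ (any common cell would map into $Y\cap Z=\emptyset$), so applying Step A tile-by-tile yields $d_{\cR^n(X)}(Y,Z)\geq 2$ for $n\geq N_1$. \emph{Step B} iterates. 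Let $Y^+$ be the union of closed cells of $\cR^{N_1}(X)$ meeting $Y$; Step A implies $Y^+\cap Z=\emptyset$. Viewing $\cR^{N_1}(X)$ as an $\cR$-complex (with structure map $f^{N_1}\circ g$) and applying induction to the disjoint pair $(Y^+,Z)$ yields, for each $M$, an integer $N$ with $d_{\cR^{N_1+N}(X)}(Y^+,Z)\geq M$. Any edge path in $\cR^{N_1+N}(X)$ from $Y$ to $Z$ has first edge contained in some closed 2-cell of $\cR^{N_1}(X)$ meeting $Y$ (hence inside $Y^+$), so its second vertex lies in $Y^+$ and the remaining path has length at least $M$; the total length is at least $M+1$. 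Iterating produces $N_M$ witnessing (Sexp).

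For (3), the direction (M0weak)$\Rightarrow$(CombExp) is immediate from the weak-isomorphism invariance of (CombExp) and the direct verification that (M0) implies each of (Esep), (VEsep) and (Vsep): any disjoint closed pair in a tile type has positive mutual distance in the underlying metric, which eventually exceeds the diameter of any single subtile. For (CombExp)$\Rightarrow$(M0weak), the plan is to equip $\SR$ with a piecewise Euclidean structure making each tile type a regular polygon, and to replace $f$ by a cellular isotope $f'$ that realizes each $\cR$-subdivision as a PL scaling. The resulting $\cR'$ is weakly isomorphic to $\cR$ by construction, and combining (CombExp)$\Leftrightarrow$(Sexp) from part (2) with the bounded geometry of the PE structure forces the PE diameters of $n$-level tiles to tend uniformly to $0$, so $\cR'$ satisfies (M0). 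For the bounded-valence supplement it suffices to verify (M0comb)$\Rightarrow$(CombExp), as the reverse is part of (1) and the equivalence with (M0weak) is the general case. Given (Esub), (Esep), and bounded valence $\leq K$: the star of any vertex $u$ in $\cR^n(t)$ comprises at most $K$ tiles, and each edge of this star is a subedge of one of the finitely many edges of $t$ incident to $u$; (Esub) shrinks these subedges combinatorially as $n$ grows, while (Esep) prevents the star from reaching any edge of $t$ disjoint from those incident to $u$. This yields (VEsep), and a parallel argument using pairs of vertex stars gives (Vsep).

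The hardest step is the direction (CombExp)$\Rightarrow$(M0weak) in part (3): one must exhibit an explicit cellular isotopy witnessing weak isomorphism between $\cR$ and its piecewise Euclidean realization, and then translate the combinatorial growth of edge-path distances from (Sexp) into uniform geometric shrinking of tile diameters. The iteration in part (2), while technical, is driven cleanly by the uniform finiteness of tile types.
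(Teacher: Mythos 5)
Your part (1) and the high-level architecture of (2) — a finite ``base case'' plus iteration — match the paper's approach, but Step A of your proof of (CombExp)$\Rightarrow$(Sexp) has a real gap. You assert that finiteness of tile types and disjoint subcomplex pairs, together with (Esep), (VEsep), (Vsep), immediately produce an $N_1$ so that no tile of $\cR^{n}(t)$ \emph{meets} both $A$ and $B$ for any disjoint subcomplexes $A,B$ of a tile type $t$. This does not follow at one shot: (VEsep), for instance, forbids a tile of $\cR^n(t)$ from containing a vertex $u$ together with a \emph{subedge} of a non-incident edge $e$, but it does not forbid that tile from touching $e$ at a single newly-created vertex. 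The paper's proof handles exactly this by working in three stages (hence the $3N_0$): at the first $N_0$ levels the meeting points are forced off the open edges by (Esep), by level $2N_0$ they are forced to be \emph{vertices} of the intermediate tile, and only then does (Vsep) apply to give the contradiction. That escalation, where the intersection points migrate to vertices of coarser tiles before the vertex-separating condition can be invoked, is the heart of the argument and is what your ``finitely many pairs, take a max'' justification omits. Your Step B iteration (stars / $Y^+$) is fine once Step A is established.

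In part (3), the paper cites and extends \cite[Lemma 2.1, Lemma 2.2, Theorem 2.3]{fsr} for both (CombExp)$\Rightarrow$(M0weak) and the bounded-valence equivalence, whereas you outline a fresh piecewise-Euclidean construction with a cellular isotopy. As you yourself note, this is only a plan, not a proof: you neither construct the isotopy nor explain how bounded geometry of the PE metric plus (Sexp) yields uniform shrinking of diameters, and the weak-isomorphism class of the resulting $\cR'$ needs verification. Your sketch for (M0comb)$\Rightarrow$(CombExp) under bounded valence is likewise incomplete: (Esep) separates \emph{disjoint} edges of $t$, but a vertex $u$ and a non-incident edge $e$ may share a neighbour edge with $e$, so the assertion that (Esep) ``prevents the star from reaching $e$'' is not justified as written. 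Either fill in these arguments in detail or, as the paper does, reduce to the cited lemmas of \cite{fsr}.
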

\begin{proof}
(1) is clear.

The reverse implication in (2) follows from (1). The forward
implication in (2) is similar to \cite[Lemma 2.7]{expi}.  We begin by
assuming that $X$ is a tile type of $\cR$ and that $Y$ and $Z$ both
consist of either one vertex or one edge.  By repeatedly applying what
it means for $\cR$ to be combinatorially expanding and using the fact
that there are only finitely many choices for $X$, $Y$, and $Z$, we
find that there exists a positive integer $N_0$ for which the
following three statements hold.  If $Y$ and $Z$ are distinct
vertices, then no tile of $\cR^{N_0}(X)$ contains both $Y$ and $Z$.
If $Y$ is a vertex not in the edge $Z$, then no tile of $\cR^{N_0}(X)$
contains $Y$ and an edge in $Z$.  If $Y$ and $Z$ are disjoint edges,
then no tile of $\cR^{N_0}(X)$ contains an edge in $Y$ and an edge in
$Z$.  Moreover, the same statements hold for the same integer $N_0$ if
$X$ is any tile in any $\cR$-complex and if $Y$ and $Z$ both consist
of either one vertex or one edge of $X$.

Now let $Y$ and $Z$ be any disjoint subcomplexes of any $\cR$-complex
$X$.  We next prove by contradiction that no tile of $\cR^{3N_0}(X)$
meets both $Y$ and $Z$.  To prove this, suppose that $t_3$ is a tile
of $\cR^{3N_0}(X)$ which meets both $Y$ and $Z$.  Let $y\in Y\cap t_3$
and $z\in Z\cap t_3$.  Let $t_i$ be the tile of $\cR^{iN_0}(X)$ which
contains $t_{i+1}$ for $i=2$, then $i=1$, and then $i=0$.  Because $y$
and $z$ are both contained in $t_3$ and the subcomplexes $Y$ and $Z$
are disjoint, neither $Y$ nor $Z$ contains $t_3$.  Hence neither $Y$
nor $Z$ contains $t_i$ for $i\in \{0,1,2,3\}$.  So neither $y$ nor $z$
is in the open 2-cell of $t_i$ for $i\in \{0,1,2,3\}$.  Now we show
that either $y$ or $z$ is a vertex of $t_1$.  If not, then because $y$
and $z$ are not in the open 2-cell of $t_1$, both $y$ and $z$ are
contained in open edges of $t_1$.  Because $y$ and $z$ are not in the
open 2-cell of $t_0$, these edges of $t_1$ are contained in edges of
$t_0$.  Because $Y$ is a subcomplex of $X$, the open edge of $t_0$
which contains $y$ is contained in $Y$.  Similarly, the open edge of
$t_0$ which contains $z$ is contained in $Z$.  Because $Y$ and $Z$ are
disjoint, so are these edges of $t_0$.  This violates the edge
separating property provided by the choice of $N_0$.  So either $y$ or
$z$ is a vertex of $t_1$.  The same reasoning now shows that both $y$
and $z$ are vertices of $t_2$.  We finally obtain a contradiction to
the vertex separating property provided by the choice of $N_0$.  Thus
no tile of $\cR^{3N_0}(X)$ meets both $Y$ and $Z$.

The previous paragraph implies that the star of $Y$ in $\cR^{3N_0}(X)$
is disjoint from $Z$.  Inductively, the $m$th star of $Y$ in
$\cR^{3mN_0}(X)$ is disjoint from $Z$ for every positive integer $m$.
So if $m\ge M$, then the edge-path distance in $\cR^{3mN_0}(X)$
between $Y$ and $Z$ is greater than $M$.  Thus we may let $N$ be any
integer greater than $3MN_0$. This proves (2).

For (3), note that by the first statement of the theorem,
$\textrm{(CombExp)} \Rightarrow \textrm{(M0comb)}$ for any finite
subdivision rule.  The equivalence of (M0comb) and (M0weak) for a
bounded valence finite subdivision rule is \cite[Theorem 2.3]{fsr}.
It is clear that for any finite subdivision rule, (M0weak) implies
(CombExp). If a finite subdivision rule satisfies (CombExp), then by
definition it satisfies the conclusion of \cite[Lemma 2.1]{fsr}.  This
implies that the proofs of Lemma 2.2 and of Theorem 2.3 of \cite{fsr}
go through without the assumption of bounded valence.  Hence (CombExp)
implies (M0weak).
\end{proof}

Examples~\ref{ex:ex1} and \ref{ex:tr6} show that if $\cR$ does not
have bounded valence, then (M0comb) does not imply (Vsep) or (VEsep).
The property of (M0comb) was defined in \cite{fsr} when the authors
were primarily interested in finite subdivision rules with bounded
valence. For finite subdivision rules without bounded valence, the
property (CombExp) is a better fit for the notion ``mesh approaching
zero combinatorially''. By the second part of
Theorem~\ref{thm:expsep}, we could have defined combinatorially
expanding as being subcomplex expanding. We used the definition we
gave instead because it is easier to verify.

In \cite[Theorem 6.1]{virt} we prove that, to verify the conditions
(Esub) and (Esep) of (M0comb) for a finite subdivision rule $\cR$, it
suffices to let $n$ be the integer $kl^2$, where $k$ is the number of
tiles in $\SR$ and $l$ is the maximum number of edges of a tile type
of $\cR$. The statement and proof easily adapt to give the following.

\begin{thm}\label{thm-complexity} Let $\cR$ be a finite subdivision 
rule, let $k$ be the number of tiles in $\SR$, and let $l$ be the
maximum number of edges of one of the tile types of $\cR$. Then $\cR$
is combinatorially expanding if and only if the conditions (Esep),
(VEsep), and (Vsep) are satisfied for the integer $n=kl^2$.
\end{thm}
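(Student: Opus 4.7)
My plan is to adapt the pigeonhole argument from \cite[Theorem 6.1]{virt} (which handled (Esub) and (Esep)) and run it uniformly on each of the three component properties (Esep), (VEsep), and (Vsep). Since (CombExp) is by definition the conjunction of these three, and each of them is trivially implied by the assertion ``holds at level $n = kl^2$ for every configuration'' (with $n = kl^2$ serving as a uniform witness), only the forward direction needs genuine work: assuming (CombExp), I must show that a single level $n = kl^2$ suffices uniformly.

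For each of the three properties, I would introduce a finite set of \emph{forbidden configurations} attached to tile types: for (Esep), triples $(t, a, b)$ with $t$ a tile type and $a, b$ disjoint edges of $t$; for (VEsep), triples $(t, u, e)$ with $u$ a vertex of $t$ and $e$ an edge of $t$ not containing $u$; for (Vsep), triples $(t, u, v)$ with $u, v$ distinct vertices of $t$. In each case there are at most $k l^2$ configurations, since there are $k$ tile types, each with at most $l$ edges and hence at most $l$ vertices. The key monotonicity observation is that if a tile $T$ of $\cR^{n+1}(t_0)$ witnesses forbidden behavior, then its parent in $\cR^n(t_0)$ does too, so the property for a starting configuration $\cC_0$ holds iff its chain of bad descendants eventually terminates. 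Now suppose it fails for $\cC_0$; finite branching of the descendant tree together with K\"onig's lemma produces an infinite nested bad chain $T_0 \supseteq T_1 \supseteq T_2 \supseteq \cdots$ with $T_m$ at level $m$, and each $T_m$ determines a configuration $\cC_m$ (its tile type $t_m$ together with the distinguished data induced inside it) in a set of size at most $kl^2$.

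Pigeonhole among $\cC_0, \ldots, \cC_{kl^2}$ then yields a repetition $\cC_{n_1} = \cC_{n_2}$ with $n_1 < n_2 \le kl^2$; the $(n_2 - n_1)$-fold subdivision pattern applied to $T_{n_1}$ produces a bad sub-tile of the same type with the same distinguished data, and iterating this yields bad tiles at arbitrarily high levels. In particular the property already fails at level $kl^2$; contrapositively, if (Esep), (VEsep), and (Vsep) all hold at level $n = kl^2$ for every starting configuration, then (CombExp) holds. The main technical obstacle I foresee is the bookkeeping in extracting $\cC_m$ from $T_m$: one must verify that the distinguished edges or vertices inside $T_m$ are honest edges or vertices of its tile type $t_m$, and that disjointness or non-incidence is preserved along the chain. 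This is ensured by the cellular nature of $\cR$-subdivisions (0-cells remain 0-cells, subedges remain subedges), and is essentially the same verification carried out in \cite[Theorem 6.1]{virt}.
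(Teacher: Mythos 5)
Your pigeonhole idea---at most $kl^2$ configurations, so a descent chain of length $kl^2+1$ must repeat one, and the repeated block can be iterated---is exactly the combinatorial content of the paper's proof, which packages it as directed cycles in finite graphs $G_e$, $G_v$, $G_{ve}$ whose vertices are the configurations. But the deployment is backwards. You correctly note that ``holds at level $kl^2$ $\Rightarrow$ (CombExp)'' is immediate and that the work is in ``(CombExp) $\Rightarrow$ holds at level $kl^2$.'' Yet the argument you then run hypothesizes ``(Esep) fails for $\cC_0$''---that is, $\lnot$(CombExp)---uses K\"onig's lemma to extract an infinite bad chain, and concludes ``the property already fails at level $kl^2$,'' which is a tautology under that hypothesis. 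The contrapositive you then record is precisely the trivial implication you set aside at the start, so as written the nontrivial direction is never established.

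The fix is to contrapose the other implication: assume some tile $T$ of level $kl^2$ violates one of the separation conditions for a starting configuration $\cC_0$. The descent $T=T_{kl^2}\subseteq T_{kl^2-1}\subseteq\cdots\subseteq T_0$ is then a finite bad chain given directly---no compactness argument is needed---producing $kl^2+1$ configurations $\cC_0,\dots,\cC_{kl^2}$ from a set of size at most $kl^2$. Pigeonhole gives $\cC_{n_1}=\cC_{n_2}$ with $n_1<n_2\le kl^2$; iterating the subdivision block from level $n_1$ to $n_2$ yields bad tiles for $\cC_{n_1}$ at every level $n_1+m(n_2-n_1)$, hence at all levels by monotonicity, so the relevant separation condition fails outright for $\cC_{n_1}$ and $\cR$ is not combinatorially expanding. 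The need for K\"onig's lemma in your version is a symptom of having started from the wrong hypothesis. Your closing bookkeeping concerns (that the distinguished cells inside $T_m$ are honest cells of its tile type and that disjointness persists down the chain) are indeed the right things to verify and match the paper.
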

\begin{proof}
The proof follows the argument in the proof of \cite[Theorem
6.1]{virt}. If the conditions (Esep), (VEsep), and (Vsep) are
satisfied for $n=kl^2$, then $\cR$ is combinatorially expanding.  Now
suppose that $\cR$ is combinatorially expanding. Let $G_e$
(respectively $G_v$) be the directed graph whose vertices are triples
$(t,a_1,a_2)$ such that $t$ is a tile type and $a_1$ and $a_2$ are
disjoint edges (respectively vertices) of $t$. Let $G_{ve}$ be the
directed graph whose vertices are triples $(t,v,e)$ such that $t$ is a
tile type, $v$ is a vertex of $t$, and $e$ is an edge of $t$ which is
disjoint from $v$. For each of the graphs, there is a directed edge
from $(t,a_1,a_2)$ to $(t',a_1',a_2')$ if the subdivision $\cR(t)$
contains a tile with type $t'$ such that the 0-cell (or 1-cell)
corresponding to $a_i'$ is equal to (or contained in) $a_i$ for
$i=1,2$. Each of the three graphs has at most $kl^2$ vertices. The
subdivision rule $\cR$ does not satisfy condition (Esep) (respectively
(Vsep), respectively (VEsep)) if and only if the graph $G_e$
(respectively $G_v$, respectvely $G_{ve}$) has a directed cycle. For
each case, if there is a directed cycle there is a directed cycle of
length at most $kl^2$, the number of vertices in the graph.
\end{proof}

\section{The virtual endomorphism}\label{sec:endo}\nosubsections

This section mainly fixes definitions and notation related to the
orbifold fundamental group virtual endomorphism.  Let $f\co S^2\to
S^2$ be a Thurston map.

\emph{The weight function $\zn$.} For $y\in S^2$ let
$\zn(y)=\text{lcm}\{\deg_x(f^n):f^n(x)=y,n\in \mathbb{N}\}$.  The set
of elements $x$ in $S^2$ such that $\zn(x)>1$ is exactly the
postcritical set $P_f$ of $f$, a finite set.

\emph{The orbifold fundamental group $G_p$.} We choose a basepoint
$p\in S^2\setminus P_f$.  For every $x\in P_f$ set $\zm(x)=\zn(x)$ if
$\zn(x)\in \mathbb{Z}$ and set $\zm(x)=0$ if $\zn(x)=\infty$.  Let
$N_p$ be the normal subgroup of the fundamental group
$\zp_1(S^2\setminus P_f,p)$ generated by all elements of the form
$g^{\zm(x)}$, where $g$ is represented by a simple loop around $x\in
P_f$ based at $p$.  The orbifold fundamental group associated to $f$
is $G_p=\zp_1(S^2\setminus P_f,p)/N_p$.  Changing the basepoint $p$
leads to an isomorphic group just as for the ordinary fundamental
group.

\emph{The orbifold universal covering map $\zp$.} Let $\zp\co D\to
S^2$ be the orbifold universal covering map for $f$.  The restriction
of $\zp$ to the points of $D$ which map to $S^2\setminus P_f$ is the
ordinary covering map associated to the subgroup $N_p$ of
$\zp_1(S^2\setminus P_f,p)$.  The image of $\zp$ is $S^2\setminus
P_f^\infty$, where $P_f^\infty=\{x\in P_f:\zn(x)=\infty\}$.  The local
degree of $\zp$ at $x\in D$ is $\zn(\zp(x))$.  So the points of $D$
which map to $P_f\setminus P_f^\infty$ are branch points of $\zp$.

\emph{The orbifold fundamental group virtual endomorphism $\zf$.} We
define a group homomorphism $\zf$ from a subgroup of $G_p$ to $G_p$.
Choose $q\in S^2$ such that $f(q)=p$.  Let $\zb$ be a path from $p$ to
$q$ in $S^2\setminus P_f$.  The domain $\text{dom}(\zf)$ of $\zf$
consists of all elements $g\in G_p$ represented by loops $\zg$ based
at $p$ such that $\zg$ lifts via $f$ to a loop $f^{-1}(\zg)[q]$ based
at $q$.  Then $\zf(g)$ is the homotopy class of $\zb
f^{-1}(\zg)[q]\zb^{-1}$.  Concatenations of paths are read from left
to right.  This definition depends on the choice of $p$, $q$ and
$\zb$, but it is unique up to pre- and post-composition by inner
automorphisms.  This defines the orbifold fundamental group virtual
endomorphism $\zf\co G_p\dashrightarrow G_p$ associated to $f$.

\emph{Properties of $\zf$.} Suppose that $f$ has degree $d$.  Let
$q_1=q,\dotsc,q_d$ be all the points in $S^2$ which $f$ maps to $p$.
Let $\zg$ be a loop based at $p$.  Then the lift $f^{-1}(\zg)[q]$ of
$\zg$ to a path based at $q$ ends at one of the points
$q_1,\dotsc,q_d$.  If $\zg=f(\za)$, where $\za$ is a path from $q$ to
$q_i$ for some $i\in\{1,\ldots,d\}$, then $f^{-1}(\zg)[q]=\za$ ends at
$q_i$.  This discussion essentially implies that $\zf$ is surjective
and that $[G_p:\text{dom}(\zf)]=d$.  Extending this argument shows
that $\zf$ is level transitive in the language of Nekrashevych's book
\cite{N}.

\emph{The contraction ratio $\zr_\zf$.} Let $\left\|g\right\|$ denote
the length of $g\in G_p$ relative to some finite generating set of
$G_p$.  The contraction ratio of $\zf$ is by definition
  \begin{equation*}
\zr_\zf=\limsup_{n\to \infty}\left(\limsup_{\left\|g\right\|\to
\infty}\frac{\left\|\zf^n(g)\right\|}{\left\|g\right\|}\right)^{1/n}.
  \end{equation*}
Of course, in the inner limit we only consider $g\in
\text{dom}(\zf^n)$.  Lemma 2.11.10 of \cite{N} shows that this limit
exists and is independent of the choice of generating set.  Because
$\zf$ is level transitive, Proposition 2.11.11 of \cite{N} implies
that $\zf$ is contracting if and only if $\zr_\zf<1$.

\emph{Iterates of $\zf$.} Let $n$ be a positive integer.  We may view
$\zf^n$ in the following way.  We have $p$ and $\zb_1=\zb$.  We
inductively define $\zb_i$ to be the lift of $\zb_{i-1}$ via $f$ such
that the initial endpoint of $\zb_i$ equals the terminal endpoint of
$\zb_{i-1}$ for $i\in\{2,\ldots,n\}$.  Let $r$ be the terminal
endpoint of $\zb_n$.  Then a loop $\zg$ at $p$ represents an element
$g$ of $\text{dom}(\zf^n)$ if and only if the lift of $\zg$ to $r$ via
$f^n$ is closed.  Furthermore, $\zf^n(g)$ is gotten by traversing
$\zb_1,\dotsc,\zb_n$, then $f^{-n}(\zg)[r]$ and then
$\zb_n^{-1},\dotsc,\zb_1^{-1}$.

\emph{$G_p\cong \text{Aut}(\zp)$.} Since $N_p$ is a normal subgroup of
$\zp_1(S^2\setminus P_f,p)$, the covering map $\zp$ is normal
(regular), and its automorphism group $\text{Aut}(\zp)$ of deck
transformations is isomorphic to $G_p$.  We next explicitly identify
such an isomorphism.  Let $\widetilde{p}\in D$ be a lift of the
basepoint $p\in S^2\setminus P_f$.  Let $\zg$ be a loop in
$S^2\setminus P_f$ based at $p$.  Let $\widetilde{\zg}$ be the lift of
$\zg$ based at $\widetilde{p}$.  There exists a unique element $\zs\in
\text{Aut}(\zp)$ such that $\zs(\widetilde{p})$ is the terminal
endpoint of $\widetilde{\zg}$.  The map $\zg\mapsto \zs$ induces a
group isomorphism from $G_p$ to $\text{Aut}(\zp)$.  We use this group
isomorphism to conjugate $\zf$ to a virtual endomorphism of
$\text{Aut}(\zp)$.  Abusing notation, we also denote the virtual
endomorphism of $\text{Aut}(\zp)$ by $\zf$.

\emph{Torsion in $G_p$.} The group $G_p$ is an $F$-group in the
terminology of Lyndon and Schupp's book \cite{LS}.  Proposition 6.2 of
\cite{LS} implies that every torsion element of $G_p$ is conjugate to
an element with a representative loop which is either peripheral or
inessential.  So every torsion element of $\text{Aut}(\zp)$ fixes a
branch point of $\zp$.

\emph{The lift $F$ of $f^{-1}$.} The multifunction $f^{-1}$ lifts to
a genuine function $F\co D\to D$ via $\zp$.  Recall that $q$ is a
point in $S^2$ such that $f(q)=p$ and that $\zb$ is a path in
$S^2\setminus P_f$ from $p$ to $q$.  Let $\widetilde{\zb}$ be the lift
of $\zb$ to $D$ based at $\widetilde{p}$, and let $\widetilde{q}$ be
the terminal endpoint of $\widetilde{\zb}$.  We may, and do, choose
$F$ so that $F(\widetilde{p})=\widetilde{q}$.

We conclude this section with the following theorem which relates
$\zf$ and $F$.

\begin{thm}\label{thm:fnleqn} If $\zs$ is an element of the domain of
the virtual endomorphism $\zf\co \text{Aut}(\zp)\dashrightarrow
\text{Aut}(\zp)$, then $\zf(\zs)\circ F=F\circ \zs$.
\end{thm}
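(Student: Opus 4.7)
The plan is to verify that $\zf(\zs)\circ F$ and $F\circ\zs$, viewed as continuous self-maps of the connected space $D$, have identical $\zp$-images pointwise and coincide at the basepoint $\widetilde{p}$. Since the deck group $\text{Aut}(\zp)$ acts freely on each $\zp$-fiber, this pair of conditions will force equality of the two maps.

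For the pointwise $\zp$-projections: using $\zp\circ\zf(\zs) = \zp$ and $\zp\circ\zs = \zp$ (both being deck transformations), together with $f\circ\zp\circ F = \zp$, both $\zp\circ\zf(\zs)\circ F$ and $\zp\circ F\circ\zs$ are continuous lifts of $\zp\colon D\to S^2$ via $f\colon S^2\to S^2$. By uniqueness of lifts on the connected space $D$, it suffices to check they agree at $\widetilde{p}$; the first evaluates to $\zp(F(\widetilde{p})) = \zp(\widetilde{q}) = q$. For the second, fix a loop $\zg$ at $p$ representing $\zs$. The relation $f\circ\zp\circ F = \zp$ implies that $F\circ\widetilde{\zg}$ is a path in $D$ starting at $\widetilde{q}$ whose $\zp$-projection is a lift via $f$ of $\zg$ beginning at $q$, i.e.\ $f^{-1}(\zg)[q]$. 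Because $\zs \in \text{dom}(\zf)$, this projection is a loop at $q$, and so $\zp(F(\zs(\widetilde{p}))) = q$ as well.

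For the base-point check $\zf(\zs)(\widetilde{q}) = F(\zs(\widetilde{p}))$: by construction $\zf(\zs)$ is represented by the loop $\zb\cdot f^{-1}(\zg)[q]\cdot\zb^{-1}$ at $p$. Lifting this loop via $\zp$ starting at $\widetilde{p}$ gives the concatenation $\widetilde{\zb}\cdot(F\circ\widetilde{\zg})\cdot\eta$, where $\widetilde{\zb}$ ends at $\widetilde{q}$, $F\circ\widetilde{\zg}$ ends at $F(\zs(\widetilde{p}))$ (by the observation above), and $\eta$ is the $\zp$-lift of $\zb^{-1}$ beginning at $F(\zs(\widetilde{p}))$, ending at $\zf(\zs)(\widetilde{p})$. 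Reversing $\eta$ shows that the $\zp$-lift of $\zb$ from $\zf(\zs)(\widetilde{p})$ terminates at $F(\zs(\widetilde{p}))$. On the other hand, $\zf(\zs)\circ\widetilde{\zb}$ is itself a $\zp$-lift of $\zb$ starting at $\zf(\zs)(\widetilde{p})$ and ending at $\zf(\zs)(\widetilde{q})$. Uniqueness of path lifts yields the desired equality.

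The principal obstacle is the bookkeeping in the second step: one must correctly identify $F\circ\widetilde{\zg}$ as the $\zp$-lift of the middle segment $f^{-1}(\zg)[q]$ of the loop defining $\zf(\zs)$. This identification, made possible by the defining equation $f\circ\zp\circ F = \zp$ together with the choice $F(\widetilde{p}) = \widetilde{q}$, is exactly what couples $F$ with the actions of $\zs$ and $\zf(\zs)$ in a single identity.
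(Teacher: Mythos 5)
Your proof follows essentially the same route as the paper: the heart of the argument is to lift the representing loop $\zb\,f^{-1}(\zg)[q]\,\zb^{-1}$ for $\zf(\zs)$ starting at $\widetilde{p}$, identify the middle piece of the lift with $F\circ\widetilde{\zg}$, and read off the identity $\zf(\zs)(F(\widetilde{p}))=F(\zs(\widetilde{p}))$ by matching endpoints of two lifts of $\zb$; from there, equality of the maps on all of $D$ follows by a uniqueness-of-lifts argument anchored at $\widetilde{p}$. The paper compresses this last part into a single sentence (``$\zf(\zs)\circ F$ and $F\circ\zs$ both lift $f^{-1}$, and they agree at one point''), whereas you first establish that $\zp\circ\zf(\zs)\circ F=\zp\circ F\circ\zs$ and then argue along the fibers of $\zp$; this is a slightly more explicit decomposition of the same uniqueness principle rather than a different idea.

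One imprecision worth flagging: you assert that $\text{Aut}(\zp)$ acts \emph{freely} on each $\zp$-fiber, and this is not true for the orbifold universal cover. Precisely at the branch points of $\zp$ (those mapping to $P_f\setminus P_f^\infty$) there are nontrivial stabilizers, so the statement as written fails. The conclusion you want is still salvageable because $\widetilde{p}$ and $\widetilde{q}$ lie over $p,q\notin P_f$ and hence are not branch points, the branch points are discrete in $D$, and the set on which $\zf(\zs)\circ F$ and $F\circ\zs$ agree is both open (away from branch points, where $\zp$ is a genuine local homeomorphism) and closed; connectedness of $D$ minus the discrete branch locus, plus continuity, then gives equality everywhere. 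You should replace the freeness claim with this open-closed/connectivity argument. The same caveat applies, strictly speaking, to your invocation of ``uniqueness of lifts via $f$'' in the first step, since $f$ is a branched cover; but the branching is again confined to a finite set, and the same discreteness argument repairs it.
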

  \begin{proof} Let $\zs$ be in the domain of $\zf$.  We recall the
definition of $\zf(\zs)$.  Let $\zg$ be a loop in $S^2\setminus P_f$
based at $p$ which represents the element $g\in G_p$ which maps to
$\zs$ under the isomorphism from $G_p$ to $\text{Aut}(\zp)$.  Let
$\widetilde{\zg}$ be the lift of $\zg$ based at $\widetilde{p}$.  See
Figure~\ref{fig:fnleqn}.  Then the terminal endpoint of
$\widetilde{\zg}$ is $\zs(\widetilde{p})$.  We have a path $\zb$ in
$S^2\setminus P_f$ based at $p$ and ending at $q$.  We have the lift
$\widetilde{\zb}$ of $\zb$ based at $\widetilde{p}$ and ending at
$\widetilde{q}=F(\widetilde{p})$.  We have that $\zf(g)$ is the
homotopy class of $\zb f^{-1}(\zg)[q]\zb^{-1}$.  So
$\zf(\zs)(\widetilde{p})$ is the terminal endpoint of the path which
traverses $\widetilde{\zb}$, then traverses $F(\widetilde{\zg})$ and
then traverses the inverse of the lift $\widetilde{\zb}'$ of $\zb$
which ends at $F(\zs(\widetilde{p}))$.  Since $\zf(\zs)\circ
\widetilde{\zb}$ is a lift of $\zb$ based at
$\zf(\zs)(\widetilde{p})$, it follows that $\zf(\zs)\circ
\widetilde{\zb}=\widetilde{\zb}'$.  Comparing terminal endpoints of
these two equal paths, we find that
$\zf(\zs)(F(\widetilde{p}))=F(\zs(\widetilde{p}))$.  So $\zf(\zs)\circ
F$ and $F\circ\zs$ both lift $f^{-1}$, and they agree at one point,
$\widetilde{p}$.  Thus they are equal.

\begin{figure}
\centering
\includegraphics{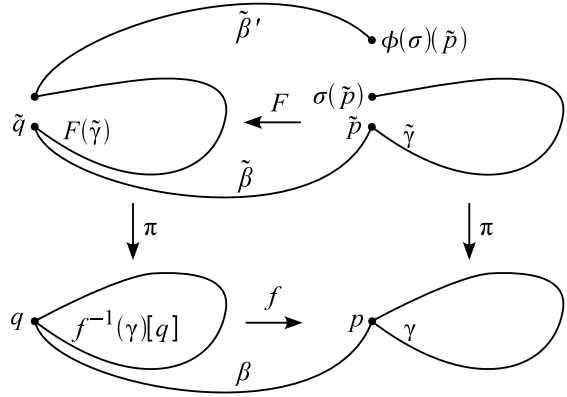}
\caption{Proving Theorems ~\ref{thm:fnleqn} and \ref{thm:endo}}
\label{fig:fnleqn}
\end{figure}

This proves Theorem~\ref{thm:fnleqn}.

\end{proof}

\section{Contracting finite subdivision rules}
\label{sec:defns}\nosubsections

In this section we assume that $f\co S^2\to S^2$ is a Thurston map
which is the subdivision map of a finite subdivision rule $\cR$.  We
fix more definitions and notation leading to the notion of contracting
finite subdivision rule.

\emph{The finite subdivision rule $\cR$.} The 2-sphere $S^2$ has the
structure of a CW complex $\cR^0(S^2)$.  Every postcritical point of
$f$ is a vertex of $\cR^0(S^2)$, and $f$ maps the 1-skeleton of
$\cR^0(S^2)$ into itself.  This leads to an infinite sequence
$\cR^0(S^2)$, $\cR^1(S^2)$, $\cR^2(S^2),\ldots$ of cell structures,
each one refining the previous one, such that $f$ maps every open cell
of $\cR^n(S^2)$ homeomorphically onto an open cell of $\cR^{n-1}(S^2)$
for every positive integer $n$.  We refer to the cells of $\cR^n(S^2)$
as cells of \emph{level} $n$.

\emph{The cell structure $\cR^{-1}(S^2)$.} We choose a tree in the
1-skeleton of $\cR^0(S^2)$ which contains $P_f$.  For neatness, we
require that every leaf of this tree is in $P_f$.  The vertices and
edges of this tree together with the 2-cell which they determine
provide $S^2$ with another cell structure, which we denote by
$\cR^{-1}(S^2)$.  When dealing with $\cR^{-1}(S^2)$, we abuse
terminology by referring to $S^2$ as a tile $t$ (with edge
identifications).  By the interior of $t$, we mean the points of $S^2$
which are not in the 1-skeleton of $\cR^{-1}(S^2)$.  This is an open
topological disk.

\emph{Lifting the cell structures to $D^*$.} We would like to use
$\zp$ to lift our cell structures on $S^2$ to cell structures on the
orbifold universal covering space $D$.  This is not always possible
because in general $\zp$ does not surject onto $S^2$.  It surjects
onto $S^2\setminus P_f^\infty$.  Consequently, $D$ is generally in a
certain sense missing some vertices.  So we enlarge $D$ by adjoining a
set $C$ (cusps) to it in such a way that $\zp$ extends naturally to
$C$, mapping $C$ to $P_f^\infty$.  We continue to use $\zp$ to denote
this extension.  Let $D^*=D\cup C$.  The cell structures
$\cR^{-1}(S^2), \cR^0(S^2),\ldots$ on $S^2$ now lift via $\zp$ to cell
structures $\cR^{-1}(D^*), \cR^0(D^*),\ldots$ on $D^*$.  (This
determines the topology of $D^*$.)  The elements of $\text{Aut}(\zp)$
extend to $D^*$ and preserve these cell structures.

\emph{The generating set $S$.} Let $t$ be the tile of $\cR^{-1}(S^2)$.
In this paragraph we require that $p$ is not in the 1-skeleton of
$\cR^{-1}(S^2)$, that is, $p$ is in the interior of $t$.  Let
$\widetilde{p}\in D$ be a lift of $p$.  Let $\widetilde{t}$ be the
lift of $t$ containing $\widetilde{p}$.  The neatness assumption in
the definition of $\cR^{-1}(S^2)$ implies that every edge of
$\widetilde{t}$ is on the boundary of $\widetilde{t}$.  These edges
can be partitioned into pairs $\{e,e'\}$ of distinct edges such that
$\zs(e)=e'$ for some $\zs\in \text{Aut}(\zp)$.  The set
$\widetilde{S}$ of all such elements $\zs$, one for every such pair of
edges, forms a generating set for $\text{Aut}(\zp)$.  Using the
obvious bijection between the $\text{Aut}(\zp)$-orbits of
$\widetilde{p}$ and $\widetilde{t}$, we obtain a corresponding subset
$S$ of $G_p$, which is a generating set for $G_p$.  We use the
notation $\left\|\cdot \right\|$ to denote lengths of elements of both
$G_p$ and $\text{Aut}(\zp)$ with respect to these generating sets.

\emph{The fat path pseudometrics.} Let $n\in \{-1,0,1,\ldots\}$.  Let
$\zg$ be a curve in $D$.  (We emphasize that $\zg$ contains no
cusps.)  We define the length $\ell _n(\zg)$ of $\zg$ relative to
$\cR^n(D^*)$ to be 1 less than the number of tiles of $\cR^n(D^*)$
which meet $\zg$.  The fat path pseudometric $d_n$ on $D$ relative
to $\cR^n(D^*)$ is defined so that $d_n(x,y)$ is the minimum such
length of a curve in $D$ joining the points $x,y\in D$.  (A fat
path in $\cR^n(D^*)$ is the union of all tiles of $\cR^n(D^*)$
which meet a given curve in $D$.)  These pseudometrics are not
metrics because every two points in the interior of one tile of
$\cR^n(D^*)$ are at $d_n$-distance 0 from each other.  In addition,
the equation $d_n(x,x)=0$ fails if $x$ is in the 1-skeleton of
$\cR^n(D^*)$.  By a $d_0$-geodesic in $S^2\setminus P_f$, we mean a
curve in $S^2\setminus P_f$ whose lifts to $D$ are $d_0$-geodesics.
Let $g\in G_p$, and let $\zs\in \text{Aut}(\zp)$ be the corresponding
element given by our bijection.  If $p$ is not in the 1-skeleton of
$\cR^{-1}(S^2)$, then $d_{-1}(\widetilde{p},\zs(\widetilde{p}))=
\left\|\zs\right\|=\left\|g\right\|$.

\emph{Decompositions of curves.} Let $n$ be a nonnegative integer.
Let $\zg$ be a curve in $S^2\setminus P_f$ beginning at the point $x$
and ending at the point $y$.  We wish to speak of the level $n$
decomposition of $\zg$.  For this we always, at least implicitly,
assume that $\zg$ meets the 1-skeleton of $\cR^n(S^2)$ in finitely
many, say $m$, points, and these points are not vertices of
$\cR^n(S^2)$.  So $\zg$ decomposes as the concatenation of curves
$\zg_0,\dotsc,\zg_m$, where $\zg_0$ begins at $x$ and $\zg_m$ ends at
$y$.  If $x$ lies in a level $n$ edge, then $\zg_0$ is constant.
Likewise if $y$ lies in a level $n$ edge, then $\zg_m$ is constant.
We call $\zg_0,\dotsc,\zg_m$ the level $n$ \emph{segments} of $\zg$.
Every segment $\zg_0,\dotsc,\zg_m$ is contained in a level $n$ tile.
This concatenation $\zg_0\cdots \zg_m$ is the \emph{level $n$
decomposition} of $\zg$.  The segments $\zg_0$ and $\zg_m$ are the
\emph{outer segments}, and $\zg_1,\dotsc,\zg_{m-1}$ are the
\emph{inner segments}.  We say that $\zg$ is \emph{taut} (for level
$n$) if it intersects the 1-skeleton of $\cR^n(S^2)$ transversely and
if the following condition is satisfied.  For every inner segment
$\za$ of $\zg$ there exists a tile type $s$ to which $\za$ pulls back.
We require that the endpoints of this pullback of $\za$ lie in
different edges of $s$.  We define level $n$ decomposition and
tautness of curves in $D$ analogously.

These notions are preserved by $\zp\co D\to S^2$.  If $\zg$ is a taut
$d_0$-geodesic arc in $S^2\setminus P_f$ with a level 0 decomposition
having $m+1$ segments, then $m$ is the $d_0$-length of every lift of
$\zg$ to $D$.  We are especially interested in the case in which the
level 0 and level $n$ decompositions of $\zg$ are equal for some
positive integer $n$.  This is illustrated in Figure~\ref{fig:equal},
where level 0 edges are drawn with solid line segments and level $n=1$
edges not contained in these are drawn with dashed line segments.  The
curve $\zg$ in Figure~\ref{fig:equal} is taut.

  \begin{figure} 
\centerline{\includegraphics{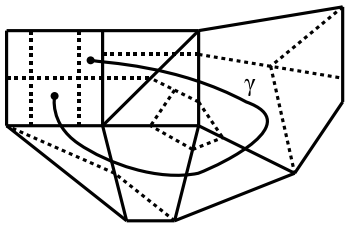}} 
\caption{A curve $\zg$ whose level 0 and level 1 decompositions are
equal}
\label{fig:equal}
  \end{figure}

\emph{Contracting finite subdivision rules.} We say that the finite
subdivision rule $\cR$ is contracting if there exist positive integers
$M$ and $n$ such that the following condition is satisfied.  If $\zg$
is a taut $d_0$-geodesic in $S^2\setminus P_f$ with level 0 and level
$n$ decompositions which are equal, then the number of level 0
segments in $\zg$ is at most $M$.\\

\noindent{\bf Remarks:}
\begin{enumerate}
\item It is immediately clear from the definition that $\cR$ is
contracting if and only if the following more combinatorial-flavored
condition holds.  Note first that a taut curve corresponds to an
edge-path without backtracking.  Next, an edge-path in the dual graph
to the tiling of $D$ at level $n$ determines in a canonical fashion a
corresponding edge-path at level $n-1$. Their lengths are the same if
and only if no two consecutive level $n$ tiles are contained in the
same parent tile. Finally, here is the equivalent condition: {\em
there exist positive integers $M, n$ such that for each level $n$
geodesic edge-path of length $M$ in the dual tiling in $D$, the
corresponding edge-path at level $0$ has length strictly less than
$M$. }

\item In light of the previous remark, it is clear that contraction is
a combinatorial condition.

\item Unlike property (M0comb) and the other properties given in \S 3,
contraction is not a condition defined solely in terms of local data.
Indeed, we are unaware of an algorithm which decides when an fsr is
contracting; see \S 9.

\item We chose the original definition of contraction to make it clear
how to attempt to check the condition: one starts enumerating curves
of length $M=1, 2, 3, \ldots$ at level $0$ and examining their
decompositions at increasing levels.
\end{enumerate}

The rest of this section is devoted to discussing contracting finite
subdivision rules.  Our interest in contracting finite subdivision
rules lies in the fact that Theorem~\ref{thm:cntrn} below shows that
$\cR$ is contracting if and only if $\zf$ is contracting.  To gain a
feeling for contracting finite subdivision rules, we next prove that
if $\cR$ is edge and vertex separating, then $\cR$ is contracting.

\begin{prop}\label{prop:cmblexpn} Let $\cR$ be a finite subdivision
rule which is edge separating and vertex separating.  Then $\cR$ is
contracting. 
\end{prop}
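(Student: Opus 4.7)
The plan is to choose $n$ large enough so that both (Esep) and (Vsep) hold uniformly across all tile types of $\cR$, and to bound the number $m+1$ of level 0 segments in any taut $d_0$-geodesic with equal level 0 and level $n$ decompositions by a combinatorial constant $M$ depending only on $\cR$. Since there are only finitely many tile types (and hence finitely many pairs of disjoint edges and distinct vertices), such an $n$ exists. All analysis takes place in the lift: a taut $d_0$-geodesic $\zg$ lifts to $\wtgamma$ in $D$ with dual sequence $t_0,\dotsc,t_m$ at level 0 and level $n$ tiles $T_i\subset t_i$, where the $t_i$ are pairwise distinct since $\wtgamma$ is a $d_0$-geodesic in the dual graph.

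First I would use (Esep) to extract corner data. For each inner index $1\le i\le m-1$, tautness implies that the entry edge $e_i^-$ and exit edge $e_i^+$ of $t_i$ are distinct level 0 edges, and both contain level $n$ subedges of $T_i$. Therefore (Esep) rules out $e_i^-\cap e_i^+=\emptyset$; let $v_i$ denote the shared vertex, and associate to each inner segment the corner pair $(t_i,v_i)$. The next step, which I expect is the crux, is to show that $T_i$ in fact contains the vertex $v_i$. Otherwise $T_i$ would be a level $n$ tile of $t_i$ spanning two edges meeting at $v_i$ while avoiding $v_i$; by iterating (Esep) on the level $n$ tile type of $T_i$ (viewed as a new tile type being subdivided) together with (Vsep), which forces $T_i$ to miss all the other vertices of $t_i$, such "edge-spanning" tiles can be pushed arbitrarily close to $v_i$ and ultimately eliminated at sufficiently high levels.

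Once $v_i$ is a vertex of $T_i$, the level $n$ edge shared between $T_i$ and $T_{i+1}$ lies in $e_i^+=e_{i+1}^-$ and has both $v_i$ and $v_{i+1}$ among its endpoints. Thus either $v_i=v_{i+1}$ (the geodesic turns around a single vertex of $D$) or $v_i\ne v_{i+1}$ (the shared edge has these as its two endpoints). Consecutive indices with $v_i=\dotsb=v_j=v$ describe tiles $t_i,\dotsc,t_j$ that are consecutive in the cyclic order around $v$ in $D$; since $D$ is simply connected and $\wtgamma$ is a $d_0$-geodesic, $\wtgamma$ cannot close a loop around $v$, which bounds the orbit length by the local valence at $v$. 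There are only finitely many corner types $(t,v)$ in $\cR$, and (Vsep) guarantees that at each such corner the level $n$ tile $T_i$ is uniquely determined up to finitely many choices, so a pigeonhole argument on corner types together with the orbit bound yields the desired global bound $M$ on $m$.

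The main obstacle is the intermediate claim that $T_i$ contains $v_i$; without the aid of (VEsep), this requires a delicate iterative use of (Esep) on the level $n$ subdivision combined with (Vsep), rather than a single application of any one axiom. A secondary obstacle is controlling orbits at vertices of $D$ whose valence may be unbounded (arising from branch points of $\zp$ at periodic critical points); here I would exploit the combinatorial finiteness of corner configurations together with (Vsep)'s separation of vertices at level $n$, rather than any direct valence bound, to close the argument.
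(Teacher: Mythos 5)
Your opening is aligned with the paper: apply (Esep) to deduce that the entry and exit edges of each inner segment share a vertex $v_i$. But after that, your proposal diverges and develops a real gap.

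The claim you identify as the crux---that the level $n$ tile $T_i$ must actually contain $v_i$---is not needed, and I do not believe it follows from (Esep) and (Vsep) alone. It is morally dual to (VEsep), which Example 3.2 shows does \emph{not} follow from the other primary properties. The iteration you sketch does not close it: if $T_i$ touches both $e_i^-$ and $e_i^+$ but misses $v_i$, then the two boundary arcs of $T_i$ lying on $e_i^-$ and $e_i^+$ are \emph{disjoint} edges of the tile type of $T_i$, and applying (Esep) to that type controls subdivisions of $T_i$; it says nothing about where $T_i$ sits inside $t_i$. The follow-up step is also too strong: even if $T_i \ni v_i$, the level $n$ edge shared by $T_i$ and $T_{i+1}$ is a proper subedge of the level $0$ edge $e_i^+$ once that edge has been properly subdivided (which (Vsep)$\Rightarrow$(Esub) eventually forces), so it cannot have both endpoints of $e_i^+$ among its endpoints.

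The paper avoids these difficulties entirely. It applies (Esep) at an intermediate level $m$ chosen, via (Vsep), so that the open $\cR^m$-stars of level $0$ vertices are pairwise disjoint; the conclusion is merely that each inner segment lies in the open $\cR^m$-star of a single level $0$ vertex $x$---far weaker than $T_i \ni v_i$, but sufficient, since disjointness forces the entire geodesic into one such star, i.e., to wind around a single $x$. The winding is then bounded by a case split on $\zn(x)$: if $\zn(x)<\infty$, a lifted $d_0$-geodesic cannot wind completely around the corresponding vertex of $D$, and the (finite) valence of that vertex in $\cR^0(D^*)$ bounds the segment count; if $\zn(x)=\infty$, the geodesic cannot wind around $x$ because its level $0$ and level $n$ decompositions are equal yet the valence of $x$ strictly increases from level $0$ to level $n$ (arranged by choosing $n$ large). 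Your closing remark that the unbounded-valence case needs something beyond a direct valence bound is correct in spirit, but the actual mechanism---pitting the valence increase at an infinite-weight vertex against the equal-decompositions hypothesis---is the specific observation missing from your sketch.
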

  \begin{proof} Because $\cR$ is edge separating, there exists a
positive integer $k$ such that for every tile type $t$ of $\cR$ and
disjoint edges $e_1$ and $e_2$ of $t$, no tile of $\cR^k(t)$ contains
subedges of $e_1$ and $e_2$.  Because $\cR$ is vertex separating,
there exists a positive integer $m$ such that the open
$\cR^m(S^2)$-stars of the vertices of $\cR^0(S^2)$ are disjoint.  Let
$n$ be an integer such that $n\ge k+m$ and if $x$ is a vertex of
$\cR^0(S^2)$ with $\zn(x)=\infty$, then the valence of $x$ in
$\cR^n(S^2)$ is greater than the valence of $x$ in $\cR^0(S^2)$.  Let
$M$ be a positive integer which is greater than the valence of every
vertex of $\cR^0(S^2)$ and every vertex of $\cR^0(D^*)$ in $D$.

Now let $\zg$ be a taut $d_0$-geodesic in $S^2\setminus P_f$ with level 0
and level $n$ decompositions which are equal.  We will show that $\zg$
has at most $M$ level 0 segments.

In this paragraph, we show that $\zg$ is contained in the
$\cR^m(S^2)$-open star of some level 0 vertex.  Let $\za$ be an inner
level $n$ segment of $\zg$.  The endpoints of $\za$ lie in interiors
of level $n$ edges.  Because $n>m$, the endpoints of $\za$ also lie in
level $m$ edges $e_1$ and $e_2$.  Because $n\ge k+m$, the choice of
$k$ implies that $e_1$ and $e_2$ are consecutive edges of some level
$m$ tile and hence are not disjoint.  Because $e_1$ and $e_2$ are
contained in level 0 edges, $e_1\cap e_2$ is contained in a level 0
edge.  Because $\zg$ is taut, it follows that $e_1\cap e_2$ contains a
level 0 vertex $x$.  Hence $\za$ is contained in the open
$\cR^m(S^2)$-star of $x$.  Since such open stars are disjoint, it
follows that $\zg$ is contained in the open $\cR^m(S^2)$-star of $x$.
Moreover, because $\zg$ is taut, it winds around $x$ in the sense that
its inner segments join consecutive level 0 edges which contain $x$
without backtracking.

If $\zn(x)<\infty$, then because $\zg$ is a $d_0$-geodesic, a lift of
$\zg$ to $D$ cannot wind completely around the corresponding lift of
$x$.  Hence if $\zn(x)<\infty$, then $\zg$ has at most $M$ segments.
If $\zn(x)=\infty$, then $\zg$ cannot wind completely around $x$
because its level $n$ decomposition equals its level 0 decomposition
but the valence of $x$ in $\cR^n(S^2)$ is greater than the valence of
$x$ in $\cR^0(S^2)$.  So again, $\zg$ has at most $M$ segments.

This proves Proposition~\ref{prop:cmblexpn}.

\end{proof}

\begin{cor}\label{cor:cmblexpn} Every combinatorially expanding finite
subdivision rule is contracting.
\end{cor}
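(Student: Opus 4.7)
The plan is very short because the corollary is essentially an immediate consequence of Proposition~\ref{prop:cmblexpn}. Unwinding the definition in \S\,3, a finite subdivision rule $\cR$ is combinatorially expanding (CombExp) precisely when it satisfies the three primary properties (Esep), (VEsep), and (Vsep). In particular, a combinatorially expanding $\cR$ satisfies (Esep) and (Vsep), which are exactly the two hypotheses of Proposition~\ref{prop:cmblexpn}.

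So the proof I would write is a single sentence: assume $\cR$ is combinatorially expanding; then by definition $\cR$ is edge separating and vertex separating; hence by Proposition~\ref{prop:cmblexpn}, $\cR$ is contracting. The vertex-edge separating hypothesis (VEsep) is not needed for this implication, though of course it is part of the definition of (CombExp) and is used elsewhere (for instance, in the equivalences of Theorem~\ref{thm:expsep}).

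There is no genuine obstacle. The only thing worth flagging is conceptual rather than technical: the corollary is asserting that the local separation conditions packaged into (CombExp) are strong enough to force the \emph{global} geodesic-length condition defining contraction, and all of the work in establishing that implication has already been done inside Proposition~\ref{prop:cmblexpn} (where edge separation is used to force consecutive level-$n$ edges traversed by a taut geodesic to share a level-$0$ vertex, and vertex separation is used to disjointify level-$m$ open stars of level-$0$ vertices). The corollary simply repackages that proposition under the slightly stronger standing hypothesis of (CombExp).
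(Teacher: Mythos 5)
Your proof is correct and matches the paper exactly: the paper states the corollary with no proof precisely because it is the immediate consequence of Proposition~\ref{prop:cmblexpn} that you describe, since (CombExp) includes (Esep) and (Vsep) by definition. Your observation that (VEsep) is not actually needed for this implication is also accurate.
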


We will see in Theorem~\ref{thm:ratcntrg} that if $\cR$ is a finite
subdivision rule whose subdivision map is a Thurston map which is
Thurston equivalent to a rational map, then $\cR$ is contracting.  So
the finite subdivision rules of Examples~\ref{ex:ex1} and \ref{ex:tr6}
are both contracting because their subdivision maps are Thurston
equivalent to rational maps.  Neither of these finite subdivision
rules is combinatorially expanding.  The property of being
combinatorially expanding is much stronger than the property of being
contracting.

\section{The contraction theorems}\label{sec:twothms}\nosubsections

We prove two theorems in this section.  The first one gives a strong
form of the assertion that the contraction ratio of the orbifold
fundamental group virtual endomorphism $\zf$ is at most 1 for every
Thurston map which is the subdivision map of a finite subdivision rule
$\cR$.  The second theorem shows that $\zf$ is contracting if and only
if $\cR$ is contracting.

We maintain the notation and terminology from Sections~\ref{sec:endo}
and \ref{sec:defns}.  The following lemma prepares for the theorems.
Statement 1 gives a fundamental inequality.  It states that the map
$F$ is distance nonincreasing with respect to the pseudometric $d_0$.
The other statements provide information about the case in which
equality is attained.

\begin{lemma}\label{lemma:funineqt} Let $x,y\in D$, and let $n$ be a
positive integer.  Then we have the following.
\begin{enumerate}
  \item $d_0(F^n(x),F^n(y))\le d_0(x,y)$
  \item If $d_0(F^n(x),F^n(y))=d_0(x,y)$, then there exist points
$x'\ne x$ and $y'\ne y$ in the open stars of $x$ and $y$ in
$\cR^0(D^*)$, respectively, such that
$d_0(F^n(x'),F^n(y'))=d_0(x',y')$.
  \item If $x$ and $y$ are not vertices of $\cR^0(D^*)$ and
$d_0(F^n(x),F^n(y)) = d_0(x,y)$, then there exists a taut
$d_0$-geodesic in $D$ joining $F^n(x)$ and $F^n(y)$ whose level 0 and
level $n$ decompositions are equal.
  \item If $x$ and $y$ are not vertices of $\cR^0(D^*)$ and they are
the endpoints of a lift via $F^n$ of a taut $d_0$-geodesic joining
$F^n(x)$ and $F^n(y)$ whose level 0 and level $n$ decompositions are
equal, then $d_0(F^n(x),F^n(y))=d_0(x,y)$.
\end{enumerate}
\end{lemma}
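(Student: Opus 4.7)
The plan is to use a single $d_0$-geodesic from $x$ to $y$ as the central witness across all four parts. Let $\zg$ be such a geodesic, chosen transverse to the level $0$ $1$-skeleton of $\cR^0(D^*)$, and let $T_0,\dotsc,T_m$ be the level $0$ tiles it visits in order, with $m=d_0(x,y)$. Because $F^n$ maps each open cell of $\cR^0(D^*)$ homeomorphically to an open cell of $\cR^n(D^*)$, each image $\tau_i:=F^n(T_i)$ is a level $n$ tile, contained in a unique level $0$ tile $S_i$. The curve $F^n(\zg)$ therefore meets at most the $m+1$ level $0$ tiles $S_0,\dotsc,S_m$, which proves~(1).

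For~(2), equality forces the $S_i$ to be pairwise distinct (otherwise $F^n(\zg)$ would witness $d_0(F^n(x),F^n(y))<m$), and consequently the $T_i$ as well. I would pick any $x'\neq x$ in $T_0^{\circ}$ and $y'\neq y$ in $T_m^{\circ}$; since these open cells have closures containing $x$ and $y$, they lie in the corresponding open stars. Running along $\zg$ yields $d_0(x',y')\le m$ and, by~(1), $d_0(F^n(x'),F^n(y'))\le m$. For the reverse inequalities, I rely on the observation that any curve joining points in the interiors of two level $0$ tiles has $d_0$-length at least the dual-graph distance between those tiles, and that this dual-graph distance is exactly $m$ for both $(T_0,T_m)$ and $(S_0,S_m)$---otherwise a shorter dual-graph path would produce a curve witnessing $d_0(x,y)<m$ or $d_0(F^n(x),F^n(y))<m$. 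Equality for the primed pair follows.

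For~(3), set $\eta:=F^n(\zg)$; it is a $d_0$-geodesic of length $m$ meeting exactly the distinct tiles $S_0,\dotsc,S_m$. Every crossing of the level $n$ $1$-skeleton by $\eta$ separates consecutive $\tau_{i-1}$ and $\tau_i$, and since these sit in the distinct level $0$ tiles $S_{i-1}$ and $S_i$, the crossing must lie on $\partial S_{i-1}\cap\partial S_i$, a subset of the level $0$ $1$-skeleton. Hence the intersections of $\eta$ with the level $n$ and the level $0$ $1$-skeletons agree, so its two decompositions coincide. Tautness at each inner segment follows because the entry and exit edges of $S_i$, namely $\partial S_{i-1}\cap\partial S_i$ and $\partial S_i\cap\partial S_{i+1}$, are distinct edges of $S_i$ (as $S_{i-1}\neq S_{i+1}$), and transversality of $\eta$ to the $1$-skeleton is inherited from $\zg$ through the cellular homeomorphism property of $F^n$.

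For~(4), the given lift $\zg$ of $\eta$ via $F^n$ decomposes into $m+1$ level $0$ segments $\zg_i\subset T_i$ with $F^n(T_i)=\tau_i$. Equality of $\eta$'s two decompositions together with tautness forces the $\tau_i$, and hence their containing tiles $S_i$, to be pairwise distinct; so the $T_i$ are also distinct, meaning $\zg$ meets exactly $m+1$ level $0$ tiles. Thus $d_0(x,y)\le m=d_0(F^n(x),F^n(y))$, which combined with~(1) yields equality. The main obstacle I anticipate is the dual-graph argument in~(2): it requires verifying that a minimizing curve between tile interiors corresponds faithfully to a dual-graph edge-path whose length matches the pseudometric, and that strict inequality on the dual-graph side would indeed produce a contradiction to the geodesicity of $\zg$ or of $F^n(\zg)$, with particular care needed when $x$ or $y$ lies on the $1$-skeleton so that the transverse geodesic $\zg$ can still be chosen.
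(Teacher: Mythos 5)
Your overall approach is the same as the paper's: use the fact that $F^n$ is a cellular map from $\cR^0(D^*)$ to $\cR^n(D^*)$, so a $d_0$-geodesic $\zg$ meeting level-$0$ tiles $T_0,\dotsc,T_m$ is carried to a curve meeting level-$n$ tiles $F^n(T_0),\dotsc,F^n(T_m)$, each contained in a unique level-$0$ tile $S_i$. Parts (1), (3), and (4) essentially reproduce the paper's argument, and they are correct (with a small caveat on (4) below).

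Part (2) is where you diverge from the paper, and it is where the gap lies. You pick arbitrary $x'\in T_0^\circ$, $y'\in T_m^\circ$ and try to establish the two equalities $d_0(x',y')=m$ and $d_0(F^n(x'),F^n(y'))=m$ by arguing that the dual-graph distance between $T_0$ and $T_m$ (resp.\ $S_0$ and $S_m$) is exactly $m$, ``otherwise a shorter dual-graph path would produce a curve witnessing $d_0(x,y)<m$.'' This reduction fails precisely in the case you flag at the end---when $x$ or $y$ lies in the $1$-skeleton. If $x$ is in the $1$-skeleton then any curve from $x$ to $y$ passes through $x$ and therefore meets \emph{every} level-$0$ tile containing $x$; a curve threaded through a shorter dual-graph path $T_0=U_0,\dotsc,U_{m'}=T_m$ then meets the $U_i$'s \emph{plus} all tiles containing $x$ and $y$, and the resulting tile count need not drop below $m+1$. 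So the shortcut does not contradict $d_0(x,y)=m$, and you have not shown $d_0(x',y')=m$. The paper sidesteps this entirely: it establishes that the map $t_i\mapsto S_i$ is a \emph{bijection} from the level-$0$ tiles meeting $\zg$ onto those meeting $F^n(\zg)$, and then takes $x',y'$ to be points \emph{on $\zg$ itself}, close to (but distinct from) $x$ and $y$. Since $\zg$ is transverse to the $1$-skeleton and avoids vertices except possibly at its endpoints, the subarc from $x'$ to $y'$ avoids the $1$-skeleton except at transverse crossings, so it is again a $d_0$-geodesic, and the bijection restricts to a bijection on the tiles it visits, giving $d_0(F^n(x'),F^n(y'))=d_0(x',y')$ with no case analysis. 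You should adopt that choice of $x',y'$ rather than arbitrary interior points. Note also that the statement only requires $x',y'$ in the open stars of $x,y$, not equality $d_0(x',y')=m$---your version tries to prove something stronger than necessary.

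A minor imprecision in (4): you write that ``equality of $\eta$'s two decompositions together with tautness forces the $\tau_i$ \dots to be pairwise distinct.'' Tautness plus equal decompositions does not by itself rule out revisiting a level-$0$ tile; what forces the $S_i$ (hence $\tau_i$) to be pairwise distinct is that $\eta$ is a $d_0$-geodesic of length $m$ meeting at most the $m+1$ tiles $S_0,\dotsc,S_m$. The conclusion is right, but the cited reason should be geodesicity. The paper's route for (4) is also slightly slicker---it passes through the pseudometric $d_n$, observing $d_0(F^n(x),F^n(y))=d_n(F^n(x),F^n(y))$ and then $d_0(x,y)\le d_n(F^n(x),F^n(y))$ since $x,y$ are joined by the given lift---but your tile-count version is equivalent.
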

  \begin{proof} The function $F\co D^*\to D^*$ which lifts $f^{-1}$ is
a cellular map from $\cR^0(D^*)$ to $\cR^1(D^*)$.  Similarly, $F^n$ is
a cellular map from $\cR^0(D^*)$ to $\cR^n(D^*)$.

Let $\zg$ be a $d_0$ geodesic arc joining $x$ and $y$.  Let
$t_0,\dotsc,t_m$ be the tiles of $\cR^0(D^*)$ which meet $\zg$.  So
$m=d_0(x,y)$.  We assume that $\zg$ contains no vertex of $\cR^0(D^*)$
other than possibly $x$ and $y$, that $\zg$ meets each of
$t_0,\dotsc,t_m$ in an arc and that it intersects the 1-skeleton of
$\cR^0(D^*)$ transversely.  So $\zg$ is taut.  We maintain these
assumptions for the entire proof of Lemma~\ref{lemma:funineqt}.

Then $F^n(t_0),\dotsc,F^n(t_m)$ are the tiles of $\cR^n(D^*)$ which
meet $F^n(\zg)$.  Each of $F^n(t_0),\dotsc,F^n(t_m)$ is contained in a
unique tile of $\cR^0(D^*)$.  This proves statement 1.

To prove statement 2, suppose that $d_0(F^n(x),F^n(y))=d_0(x,y)$. Let
$d=d_0(x,y)$.  Then $d$ tiles of $\cR^0(D^*)$ meet $\zg$.  So at most
$d$ tiles of $\cR^n(D^*)$ meet $F^n(\zg)$.  So at most $d$ tiles of
$\cR^0(D^*)$ meet $F^n(\zg)$.  Now we use the assumption that
$d_0(F^n(x),F^n(y))=d_0(x,y)$.  This assumption implies that these
inequalities are equalities.  So the map from the set of tiles of
$\cR^0(D^*)$ which contain $\zg$ to the set of tiles of $\cR^0(D^*)$
which contain $\cR^0(F^n(\zg))$ is a bijection.  Hence if $x'$ and
$y'$ are any points of $\zg$, then $d_0(F^n(x'),F^n(y'))=d_0(x',y')$.
This proves statement 2.

In the situation of statement 3, the curve $F^n(\zg)$ is a taut
$d_0$-geodesic joining $F^n(x)$ and $F^n(y)$ whose level 0 and level
$n$ decompositions are equal.  This proves statement 3.

In the situation of statement 4, we have that
$d_0(F^n(x),F^n(y))=\\d_n(F^n(x),F^n(y))$ because $F^n(x)$ and $F^n(y)$
are joined by a taut $d_0$-geodesic whose level 0 and level $n$
decompositions are equal.  Because $x$ and $y$ are joined by a lift of
such a $d_0$-geodesic, $d_0(x,y)\le
d_n(F^n(x),F^n(y))=d_0(F^n(x),F^n(y))$.  This inequality combined with
statement 1 yields statement 4.

This proves Lemma~\ref{lemma:funineqt}.

\end{proof}

The next result shows that the virtual endomorphism on the orbifold
fundamental group associated to a subdivision map of an fsr cannot
expand word lengths very much.

\begin{thm}\label{thm:endo} Let $f\co S^2\to S^2$ be a Thurston map
which is Thurston equivalent to the subdivision map of a finite
subdivision rule.  Then there exist
\begin{enumerate}
  \item nonnegative integers $a$ and $b$;
  \item a choice of orbifold fundamental group virtual endomorphism
$\zf\co G_p\dashrightarrow G_p$ associated to $f$;
  \item a finite generating set for the orbifold fundamental group
$G_p$ with associated length function $\left\|\cdot \right\|$
\end{enumerate}
such that
  \begin{equation*}
\left\|\zf^n(g)\right\|\le a \left\|g\right\|+bn
  \end{equation*}
for every nonnegative integer $n$ and $g\in \text{dom}(\zf^n)$.  If
the subdivision map takes some tile of level 1 to the tile of level 0
which contains it, then we may take $b=0$.  If there is only one tile
of level 0, then we may take $a=1$.
\end{thm}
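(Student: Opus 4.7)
The plan is to read off $\|g\|$ and $\|\phi^n(g)\|$ geometrically from the pseudometric $d_{-1}$, build an explicit path in $D^*$ from $\tilde p$ to $\phi^n(\zs)(\tilde p)$ using the functional equation of Theorem~\ref{thm:fnleqn}, and bound its $d_{-1}$-length via the $d_0$-length of its constituent pieces. Let $\zs\in\text{Aut}(\zp)$ correspond to $g\in G_p$, so by the construction of the generating set one has $\|g\|=d_{-1}(\tilde p,\zs(\tilde p))$ and $\|\phi^n(g)\|=d_{-1}(\tilde p,\phi^n(\zs)(\tilde p))$. Iterating Theorem~\ref{thm:fnleqn} gives $\phi^n(\zs)\circ F^n=F^n\circ\zs$. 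Let $\tilde\beta^{(n)}=\tilde\beta_1\cdots\tilde\beta_n$ with $\tilde\beta_i=F^{i-1}\tilde\beta_1$, and let $\tilde\gamma$ be the lift at $\tilde p$ of a loop representing $\zs$; then the concatenation
\[
\tilde\beta^{(n)}\cdot F^n(\tilde\gamma)\cdot\bigl(\phi^n(\zs)(\tilde\beta^{(n)})\bigr)^{-1}
\]
runs from $\tilde p$ to $\phi^n(\zs)(\tilde p)$, and since deck transformations preserve $\ell_{-1}$, its $d_{-1}$-length is at most $2\ell_{-1}(\tilde\beta^{(n)})+\ell_{-1}(F^n\tilde\gamma)$.

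I would then estimate each piece via $\ell_0$. Because $\cR^{-1}$ is coarser than $\cR^0$, $\ell_{-1}\le\ell_0$ on any arc; and the proof of Lemma~\ref{lemma:funineqt} shows that $F$ sends each tile of $\cR^0(D^*)$ into a unique tile of $\cR^0(D^*)$, so $\ell_0(F\alpha)\le\ell_0(\alpha)$. Iterating yields $\ell_0(F^n\tilde\gamma)\le\ell_0(\tilde\gamma)$ and $\ell_0(\tilde\beta_i)\le\ell_0(\tilde\beta_1)=:c$. Writing $k$ for the number of tiles of $\cR^0(S^2)$, each tile of $\cR^{-1}(D^*)$ contains at most $k$ tiles of $\cR^0(D^*)$, so a thickened $d_{-1}$-geodesic realizes $d_0(\tilde p,\zs(\tilde p))\le k\|\zs\|+(k-1)$. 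Choosing $\tilde\gamma$ to be a $d_0$-geodesic then gives
\[
\|\phi^n(\zs)\|\le k\|\zs\|+(k-1)+2cn,
\]
yielding the general statement with $a=k$ and $b=2c+k-1$ after absorbing $(k-1)$ into $bn$ for $n\ge 1$ (the $n=0$ case is trivial since $k\ge 1$).

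For the two refinements I would tune the basepoint and $a$. A single tile in $\SR$ means $k=1$ and gives $a=1$ directly. If some level-1 tile maps under $f$ onto the level-0 tile $t_0$ that contains it, then the corresponding branch of $f^{-1}$ maps the disk $t_0$ into itself and so fixes a point $p_0$ by Brouwer; choosing $p=p_0$ makes $\beta$ constant and $F(\tilde p)=\tilde p$, so $c=0$ and the bound reduces to $\|\phi^n(\zs)\|\le k\|\zs\|+(k-1)$. Since $\|\zs\|=0$ forces $\zs=e$, for $\|\zs\|\ge 1$ the constant $k-1$ can be absorbed into the first term at the cost of enlarging $a$ to $2k-1$, producing the desired $b=0$.

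The main obstacle I anticipate is the two-metric bookkeeping: the length function $\|\cdot\|$ is read directly off $d_{-1}$, whereas the only nonexpansion property of $F$ at our disposal (from Lemma~\ref{lemma:funineqt}) lives at the level of $d_0$. Translating cleanly between them via $\ell_{-1}\le\ell_0\le k\ell_{-1}+(k-1)$, while tracking how the triangle inequality interacts with the functional equation of Theorem~\ref{thm:fnleqn} on iterated lifts and how the basepoint choice in the special clause kills the constant $c$, is where the care is needed.
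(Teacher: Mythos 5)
Your proposal is correct and follows essentially the same route as the paper: interpret $\left\|g\right\|$ and $\left\|\zf^n(g)\right\|$ as $d_{-1}$-distances in $D$, use the functional equation $\zf^n(\zs)\circ F^n=F^n\circ\zs$ from Theorem~\ref{thm:fnleqn} to produce an explicit path from $\widetilde p$ to $\zf^n(\zs)(\widetilde p)$ out of $n$ concatenated lifts of $\zb$, the $F^n$-image of a geodesic, and a deck-translate of the $\zb$-lifts, and then pass through $\ell_0$ and the $d_0$-nonexpansion of $F$ from statement 1 of Lemma~\ref{lemma:funineqt}. Your bookkeeping with the extra additive constant $k-1$ (and its absorption into $b$) is a little more careful than the paper's writing; the paper implicitly absorbs it as well. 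The one spot where you take a genuinely different and riskier route is the $b=0$ clause. Invoking Brouwer to produce a fixed point of the inverse branch on $t_0$ is both unnecessary and potentially problematic: the fixed point could land on $\partial t_0$, possibly at a vertex or even a postcritical point, in which case it is ineligible as a basepoint; moreover, since $f\colon s_1\to t_0$ is only guaranteed to be a homeomorphism on open cells, the ``inverse branch'' is not obviously a well-defined self-map of the closed disk $t_0$. The paper avoids all of this by simply choosing $p$, $q$, and the path $\zb$ in the interior of $t_0$ (no fixed point needed): then $\widetilde\zb$ stays inside a single tile of $\cR^0(D^*)$, so $\ell_0(\widetilde\zb)=0$ and hence $c=0$ directly. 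You should replace the Brouwer step with this simpler observation.
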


In a sequel to this work \cite{fpp:exppropii}, we will use Theorem
\ref{thm:endo} to show that Thurston maps with certain properties are
not combinatorially equivalent to subdivision maps of subdivision
rules.  For example, if $f$ has a piece of its canonical decomposition
on which the associated restriction is a pseudo-Anosov homeomorphism
\cite{kmp:cds}, then $f$ cannot be equivalent to the subdivision map
of an fsr.

  \begin{proof} The theorem is insensitive to Thurston equivalence, so
we assume that $f$ is the subdivision map of a finite subdivision rule
$\cR$.  

We return to the situation of Section~\ref{sec:endo} and the beginning
of Section~\ref{sec:defns}.  We choose the basepoint $p$ to be any
point in the interior of a level 0 tile.  The point $q$ is a point of
$S^2$ such that $f(q)=p$, and $t$ is the tile of $\cR^{-1}(S^2)$.
Both $p$ and $q$ are in the interior of $t$.  We choose a path $\zb$
from $p$ to $q$ also in the interior of $t$.  These choices of $p$,
$q$ and $\zb$ determine $\zf$.

Now let $\zg$ be a loop at $p$ representing $g\in \text{dom}(\zf)$.
See Figure~\ref{fig:fnleqn}.  Then $\zf(g)$ is the homotopy class of
$\zb f^{-1}(\zg)[q]\zb^{-1}$.  Let $\widetilde{\zg}$ be the lift of
$\zg$ to $D$ based at the lift $\widetilde{p}$ of $p$, and let $\zs\in
\text{Aut}(\zp)$ be the element corresponding to $g$.  Then the
endpoints of $\widetilde{\zg}$ are $\widetilde{p}$ and
$\zs(\widetilde{p})$.  Let $\widetilde{\zb}$ be the lift to $D$ of
$\zb$ based at $\widetilde{p}$, and let $\widetilde{t}$ be the lift of
$t$ which contains $\widetilde{p}$.  Then $\left\|\zf(g)\right\|$
equals the $d_{-1}$-distance between the endpoints of the following
path.  Traverse the path $\widetilde{\zb}$ in $\widetilde{t}$, follow
this by $F(\widetilde{\zg})$ and follow this by the lift of $\zb^{-1}$
based at $F(\zs(\widetilde{p}))$ in some open tile of $\cR^{-1}(D^*)$.

We postpone consideration of this $d_{-1}$-distance to consider the
corresponding $d_0$-distance.  This $d_0$-distance is at most
$d_0(F(\widetilde{p}),F(\zs(\widetilde{p})))+b$, where $b$ is twice
the $d_0$-length of $\widetilde{\zb}$.  Statement 1 of
Lemma~\ref{lemma:funineqt} with $n=1$ implies that this $d_0$-distance
is at most $d_0(\widetilde{p},\zs(\widetilde{p}))+b$.  So we begin
with $\widetilde{p}$ and a point at distance
$d_0(\widetilde{p},\zs(\widetilde{p}))$ from it and we end with
$\widetilde{p}$ and a point at distance at most
$d_0(\widetilde{p},\zs(\widetilde{p}))+b$ from it.  Now we iterate.
Replacing $\zf$ by $\zf^n$ for some nonnegative integer $n$, we end
with $\widetilde{p}$ and a point at distance at most
$d_0(\widetilde{p},\zs(\widetilde{p}))+bn$ from it.  If $a$ is the
number of tiles in $\cR^0(S^2)$, then
$d_0(\widetilde{p},\zs(\widetilde{p}))\le
ad_{-1}(\widetilde{p},\zs(\widetilde{p}))=a\left\|g\right\|$.  Thus if
$g\in \text{dom}(\zf^n)$ for some nonnegative integer $n$, then
  \begin{equation*}
\left\|\zf^n(g)\right\|\le a\left\|g\right\|+bn.
  \end{equation*}

This proves the first assertion of the theorem.  Suppose that $f$ maps
some tile of $\cR^1(S^2)$ to the tile of $\cR^0(S^2)$ which contains
it.  Then we may choose $p$, $q$ and $\zb$ to be in this tile of
$\cR^0(S^2)$.  In this case $b=0$.  The rest of the theorem is now
clear.

\end{proof}

\begin{cor}\label{cor:endo} Let $f\co S^2\to S^2$ be a Thurston map
which is Thurston equivalent to the subdivision map of a finite
subdivision rule.  Then the contraction ratio of the associated
orbifold fundamental group virtual endomorphism is at most 1.
\end{cor}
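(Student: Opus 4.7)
The plan is to read off the corollary directly from the inequality provided by Theorem~\ref{thm:endo}, combined with the fact (Lemma 2.11.10 of \cite{N}) that the contraction ratio $\zr_\zf$ is independent of the choice of finite generating set. So I may freely use the particular generating set and length function $\left\|\cdot\right\|$ supplied by Theorem~\ref{thm:endo}.

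First I would apply Theorem~\ref{thm:endo} to obtain nonnegative integers $a,b$, a choice of virtual endomorphism $\zf\co G_p\dashrightarrow G_p$, and a finite generating set of $G_p$ such that
  \begin{equation*}
\left\|\zf^n(g)\right\|\le a\left\|g\right\|+bn
  \end{equation*}
for every nonnegative integer $n$ and every $g\in \text{dom}(\zf^n)$.  For a fixed positive integer $n$, dividing by $\left\|g\right\|$ and letting $\left\|g\right\|\to \infty$ gives
  \begin{equation*}
\limsup_{\left\|g\right\|\to \infty}\frac{\left\|\zf^n(g)\right\|}{\left\|g\right\|}\le a.
  \end{equation*}

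Next I would take the $n$th root and let $n\to \infty$.  If $a=0$, then the inner limsup is $0$ for every $n$, so $\zr_\zf=0\le 1$.  If $a\ge 1$, then $a^{1/n}\to 1$, so
  \begin{equation*}
\zr_\zf=\limsup_{n\to \infty}\left(\limsup_{\left\|g\right\|\to \infty}\frac{\left\|\zf^n(g)\right\|}{\left\|g\right\|}\right)^{1/n}\le \limsup_{n\to \infty} a^{1/n}=1.
  \end{equation*}
In either case, $\zr_\zf\le 1$, which is the assertion of the corollary.

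There is no real obstacle here: the entire content was packaged into Theorem~\ref{thm:endo}, and the only subtlety to flag is that the length function coming from that theorem need not be the one one might otherwise pick, which is handled by citing Nekrashevych's independence-of-generating-set result. The linear-in-$n$ error term $bn$ is exactly what allows the $n$th root in the definition of $\zr_\zf$ to wash out the sublinear contribution, leaving only the multiplicative constant $a$, whose $n$th root tends to $1$.
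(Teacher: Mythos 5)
Your proof is correct and is exactly the deduction the paper intends (the paper states the corollary without proof, as an immediate consequence of Theorem~\ref{thm:endo}). Your explicit handling of the $a=0$ case and the citation of Lemma 2.11.10 of \cite{N} for independence of the generating set are both appropriate points of care.
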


In general, the term $bn$ in Theorem~\ref{thm:endo} tends to $\infty$
as $n\to \infty$.  This is undesireable behavior.  However, if $f$
maps some tile of level 1 to the tile of level 0 which contains it,
then we may take $b=0$.  The next result shows that this can be
achieved by passing to an iterate of $f$.

\begin{prop}\label{prop:bzero} Let $f\co S^2\to S^2$ be a Thurston map
which is the subdivision map of a finite subdivision rule $\cR$.  Then
there exists a positive integer $n$ such that $f^n$ maps some tile of
level $n$ to the tile of level 0 which contains it.
\end{prop}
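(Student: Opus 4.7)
The plan is to apply a pigeonhole argument to a finite directed graph built from $\cR$. Let $\mathcal{G}$ be the directed graph whose vertices are the level-0 tiles of $\SR$ and whose edges are as follows: for each level-1 tile $s$ in $\cR(\SR)$, include an edge labeled $s$ from the level-0 tile containing $s$ to $f(s)$ (which is a level-0 tile since $f$ is cellular from $\cR(\SR)$ to $\SR$). Every level-0 tile $T$ contains at least one level-1 subtile, because its interior is a nonempty open subset of the 2-dimensional CW complex $\SR$ and so must meet some open 2-cell of the refinement $\cR(\SR)$; hence every vertex of $\mathcal{G}$ has positive outdegree. Since $\SR$ has only finitely many level-0 tiles, pigeonhole applied to any forward trajectory yields a directed cycle of some length $n\ge 1$: vertices $V_0,V_1,\dotsc,V_n=V_0$ with edge labels $s_1,\dotsc,s_n$ such that $s_k\subseteq V_{k-1}$ and $f(s_k)=V_k$.

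Next I would convert this cycle into the desired tile by constructing, inductively, a nested sequence $V_0=t_0\supseteq t_1\supseteq\dotsc\supseteq t_n$, where $t_k$ is a level-$k$ tile with $f^k(t_k)=V_k$. The base case $k=0$ is immediate. For the inductive step, observe that $f^{k-1}$ is cellular as a map $\cR^k(\SR)\to\cR(\SR)$ and is a homeomorphism on each open cell; consequently its restriction $f^{k-1}|_{t_{k-1}}\co t_{k-1}\to V_{k-1}$ induces a bijection between the level-$k$ 2-cells lying inside $t_{k-1}$ and the level-1 2-cells lying inside $V_{k-1}$. Under this bijection, $s_k$ has a unique preimage $t_k\subseteq t_{k-1}$, a level-$k$ tile with $f^{k-1}(t_k)=s_k$, and therefore $f^k(t_k)=f(s_k)=V_k$. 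Taking $k=n$ gives $t_n\subseteq V_0$ with $f^n(t_n)=V_n=V_0$; since the level-0 tile containing $t_n$ is $V_0$, this is exactly the conclusion of the proposition.

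I do not foresee any real obstacle: existence of the cycle is pure pigeonhole in a finite graph, and the inductive construction of $t_k$ is formal once one extracts the bijection coming from cellularity of $f^{k-1}\co\cR^k(\SR)\to\cR(\SR)$ together with the fact that $f^{k-1}$ is a homeomorphism on each open 2-cell. An incidental bonus is that the $n$ produced this way can be taken to be at most the number of level-0 tiles in $\SR$.
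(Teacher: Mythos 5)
Your proof is correct and takes essentially the same approach as the paper: build a finite directed graph on level-0 tiles, find a directed cycle by pigeonhole, and pull the cycle back to a nested sequence of tiles. The paper states the last step ("if $t$ is a tile in this cycle, then $t$ contains a tile $s$ of level $n$ with $f^n(s)=t$") without detail; your inductive construction of $t_0\supseteq t_1\supseteq\cdots\supseteq t_n$ using the cellularity of $f^{k-1}$ is precisely the right way to justify it.
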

  \begin{proof} We construct a directed graph $\zG$.  The vertices of
$\zG$ are the tiles of level 0.  There exists a directed edge from
tile $t_1$ to tile $t_2$ if and only if $t_1$ contains a tile $s$ of
level 1 such that $f(s)=t_2$.  The graph $\zG$ has finitely many
vertices and at least one directed edge emanating from every vertex.
Hence it has a cycle, say of length $n$.  If $t$ is a tile in this
cycle, then $t$ contains a tile $s$ of level $n$ such that
$f^n(s)=t$.  This proves Proposition~\ref{prop:bzero}.

\end{proof}

Here is our second contraction theorem.

\begin{thm}\label{thm:cntrn} Let $f\co S^2\to S^2$ be a Thurston map
which is Thurston equivalent to the subdivision map of a finite
subdivision rule $\cR$.  Let $\zr_\zf$ be the contraction ratio of the
associated orbifold fundamental group virtual endomorphism $\zf$.
Then
\begin{enumerate}
  \item $\zr_\zf=1$ if $\cR$ is not contracting;
  \item $\zr_\zf<1$ if $\cR$ is contracting.
\end{enumerate} 
Thus $\zf$ is contracting if and only if $\cR$ is contracting.
\end{thm}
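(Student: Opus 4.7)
My approach is to treat the two directions separately, making systematic use of Lemma~\ref{lemma:funineqt} (which controls the behaviour of $d_0$ under $F^n$) together with the estimate implicit in the proof of Theorem~\ref{thm:endo}: if $\zs \in \text{Aut}(\zp)$ corresponds to $g \in G_p$, then $\|\zf^n(g)\|$ is controlled by $d_0(F^n(\widetilde p), F^n(\zs(\widetilde p)))$ up to an additive $O(bn)$. I will also use the comparisons $d_{-1}\le d_0\le a\,d_{-1}+a$, where $a$ is the number of tiles of $\cR^0(S^2)$, and the identity $\|\zs\|=d_{-1}(\widetilde p,\zs(\widetilde p))$ from the generator construction. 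Corollary~\ref{cor:endo} already gives $\zr_\zf\le 1$, so part~(1) reduces to showing $\zr_\zf\ge 1$ and part~(2) to producing uniform multiplicative contraction of some iterate of $\zf$.

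For part~(1), suppose $\cR$ is not contracting. For every $n$ and every large $L$ there is a taut $d_0$-geodesic $\zg$ in $S^2\setminus P_f$ of level~$0$ length $L$ whose level~$0$ and level~$n$ decompositions agree. Extend $\zg$ by two short fixed paths into a loop at $p$ and lift to $D$; this yields $\zs\in\text{Aut}(\zp)$ with $\|\zs\|\le L+O(1)$, and the middle of the lifted loop is a $\pi$-lift of $\zg$ with endpoints $u,v$ satisfying $d_0(u,v)=L$ and equal level~$0$/$n$ decompositions. Lemma~\ref{lemma:funineqt}(4) (or a direct adaptation of its proof using the cellular structure of $F^n$) gives $d_0(F^n(u),F^n(v))=L$. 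Combining this with $\zf^n(\zs)\circ F^n=F^n\circ\zs$ (Theorem~\ref{thm:fnleqn}), the $d_{-1}$-isometry property of $\zf^n(\zs)$, the triangle inequality, and $d_{-1}\ge d_0/a - O(1)$, one deduces $\|\zf^n(\zs)\|\ge L/a - O(1)$. Hence $\limsup_{\|g\|\to\infty}\|\zf^n(g)\|/\|g\|\ge 1/a$, so $\zr_\zf\ge (1/a)^{1/n}$ for every $n$; letting $n\to\infty$ forces $\zr_\zf=1$.

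For part~(2), let $M$ and $n$ be the contracting constants. Combining Lemma~\ref{lemma:funineqt}(3) with the contracting hypothesis yields a single-step strict reduction: if $x,y\in D$ are not vertices of $\cR^0(D^*)$ and $d_0(x,y)>M$, then $d_0(F^n(x),F^n(y))\le d_0(x,y)-1$. Iterating $F^n$ gives $d_0(F^{jn}(x),F^{jn}(y))\le\max(M,\,d_0(x,y)-j)$. To upgrade this additive reduction to a multiplicative one, subdivide a taut $d_0$-geodesic from $x$ to $y$ into consecutive sub-arcs of length $2M$ with endpoints $x=z_0,z_1,\ldots,z_K=y$ (these remain non-vertices throughout by tautness and cellularity of $F$). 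Applying the iterated single-step reduction $M$ times on each sub-arc gives $d_0(F^{Mn}(z_{i-1}),F^{Mn}(z_i))\le M$; summing via the triangle inequality,
\begin{equation*}
d_0(F^{Mn}(x),F^{Mn}(y))\le KM\le \tfrac12 d_0(x,y)+M.
\end{equation*}
Iterating this improvement $k$ times yields $d_0(F^{kMn}(x),F^{kMn}(y))\le 2^{-k} d_0(x,y)+O(M)$. Choose $k$ so that $2^{-k}a<1$; together with the bound from Theorem~\ref{thm:endo} and $d_0(\widetilde p,\zs(\widetilde p))\le a\|g\|+a$, this gives $\|\zf^{kMn}(g)\|\le c\|g\|+C$ with $c<1$ and $C$ constant, for all $g$ of sufficiently large word length. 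Hence $\limsup_{\|g\|\to\infty}\|\zf^{kMn}(g)\|/\|g\|\le c<1$ and $\zr_\zf\le c^{1/(kMn)}<1$.

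The main obstacle is the chunking-and-reiteration step in part~(2): upgrading Lemma~\ref{lemma:funineqt}'s single-step strict inequality into a genuine multiplicative contraction. The trick works because the additive error $O(M)$ from an incomplete final chunk is constant in $L$, and re-iterating dampens the multiplicative factor faster than any fixed overhead, eventually beating the geometric factor $a$ arising from the coarseness of the word metric relative to $d_0$. A secondary bookkeeping issue, resolved by tautness and cellularity of $F$, is to check that all intermediate chunk endpoints remain non-vertices of $\cR^0(D^*)$ so that Lemma~\ref{lemma:funineqt}(3) continues to apply at every stage of the iteration.
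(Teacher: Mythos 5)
Your Part 2 is a valid, if more circuitous, route to the paper's Lemma~\ref{lemma:gmcineqt}: the paper chunks a geodesic into pieces of length $N$ and applies a \emph{single} iterate of $F^n$, getting the multiplicative factor $c = 1 - \frac{1}{2N}$ directly, whereas you iterate $F^n$ on each chunk $M$ times to collapse it additively to length $M$, then re-chunk and repeat. The bookkeeping you wave off is real but manageable: one must check that chunk endpoints and their forward images stay off the vertices of $\cR^0(D^*)$ at every stage. This holds because $F^n$ is cellular from $\cR^0(D^*)$ to $\cR^n(D^*)$ and $\cR^n$ refines $\cR^0$, so $F^n$ maps open cells of $\cR^0(D^*)$ of positive dimension into open cells of $\cR^n(D^*)$ of positive dimension, which contain no $\cR^0$-vertices; you should say so rather than appeal vaguely to ``tautness and cellularity.''

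The genuine gap is in Part 1, where you apply Lemma~\ref{lemma:funineqt}(4) in the wrong direction. That statement's hypothesis places the special taut geodesic with equal level-$0$ and level-$n$ decompositions \emph{joining $F^n(x)$ and $F^n(y)$}, i.e., at the image of the $F^n$-lift, and concludes $d_0(F^n(x),F^n(y)) = d_0(x,y)$. You instead put the special curve $\widetilde\zg$ (endpoints $u,v$) at the source and claim $d_0(F^n(u),F^n(v)) = d_0(u,v)$. This does not follow: the proof of statement 4 uses the equal-decompositions hypothesis to force $d_0 = d_n$ at the image, then matches level-$0$ cells at the source to level-$n$ cells at the image along the $F^n$-lift; neither step applies with the hypotheses flipped. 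In fact $d_0(F^n(u),F^n(v))$ can strictly decrease, since consecutive level-$n$ tiles of $F^n(\widetilde\zg)$ may lie in one level-$0$ tile. The correct move is the paper's: set $\zh = f^n(\zg)$, a loop based at $p$ whose lift $\widetilde\zh$ satisfies $F^n(\widetilde\zh) = \widetilde\zg$, so that $\widetilde\zg$ sits at the image of $F^n$ and statement 4 applies with $x,y$ the endpoints of $\widetilde\zh$, giving $d_0(\widetilde p, \widetilde p\,') = d_0(\widetilde q, \widetilde q\,')$; one then compares $\|h\|$ with $\|g\|$. Arranging this for infinitely many $n$ further requires fixing a basepoint $p$ in a single level-$0$ tile $s$ with $s = f^n(t)$ for infinitely many $n$; the paper's pigeonhole and the conjugated endomorphisms $\zf_n$ do exactly this, a point your sketch does not address.
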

  \begin{proof} The last sentence follows from statements 1 and 2
together with Proposition 2.11.11 from Nekrashevych's book \cite{N},
using the fact that $\zf$ is level transitive.

Like Theorem~\ref{thm:endo}, this theorem is insensitive to Thurston
equivalence, so we assume that $f$ is the subdivision map of a finite
subdivision rule $\cR$.  We return to the setting of
Section~\ref{sec:endo} and the beginning of Section~\ref{sec:defns}. 

We next prove statement 1.  Corollary~\ref{cor:endo} shows that
$\zr_\zf\le 1$.  So it suffices to prove that $\zr_\zf\ge 1$.  

Suppose that $\cR$ is not contracting.  Let $n$ be a positive
integer.  Then for every positive integer $M$ there exists a taut
$d_0$-geodesic $\zg$ in $S^2\setminus P_f$ whose level 0 and level $n$
decompositions are equal and $\zg$ has more than $M$ segments.

In this paragraph we show that we may assume that $\zg$ is a closed
curve.  Let $\widetilde{\zg}$ be a lift of $\zg$ to $D$.  Let $\za$ be
an inner segment of $\widetilde{\zg}$.  So $\za$ joins two edges $e_1$
and $e_2$ of the tile $t$ of $\cR^n(D^*)$ which contains $\za$.  We
consider a homotopy of $\widetilde{\zg}$ which changes only $\za$ and
the segments of $\widetilde{\zg}$ immediately preceding and following
$\za$.  This homotopy moves $\za$ through a family of arcs in $t$
which join $e_1$ and $e_2$.  This homotopy moves the segments
immediately preceding and following $\za$ only near $e_1$ and $e_2$.
Projecting to $S^2$, such a homotopy takes $\zg$ to another curve in
$S^2\setminus P_f$ whose level 0 and level $n$ decompositions are
equal with the same number of segments as $\zg$.  So up to such
homotopies, $\zg$ is determined by the sequence of edges of
$\cR^n(S^2)$ which it crosses.  Since there are only finitely many
such ordered pairs of edges and $M$ is arbitrary, it follows that we
may assume that $\zg$ is a closed curve.

Since $\cR^n(S^2)$ contains only finitely many tiles, there is a tile
$t$ of $\cR^n(S^2)$ for which there are infinitely many values of $M$
for which the corresponding curve $\zg$ meets $t$.  Similarly, since
$f^n(t)$ is a level 0 tile, there are infinitely many values of $n$
such that $f^n(t)$ is a fixed level 0 tile $s$.  Let $p$ be a point in
the interior of $s$.

Now let $n$ be one of the infinitely many positive integers such that
$p\in f^n(t)$.  Choose $q\in t$ such that $f^n(q)=p$.  Because $p$ is
in the interior of $s$, it follows that $q$ is in the interior of $t$.
Since there are infinitely many choices for $M$, there exist
infinitely many elements $g\in G_q$ with representative taut
$d_0$-geodesic loops whose level $n$ and level 0 decompositions are
equal.  Now choose $g\in G_q$ with a representative taut
$d_0$-geodesic loop $\zg$ whose level $n$ and level 0 decompositions
are equal.  Let $\zh=f^n(\zg)$, a loop based at $p$.  In other words,
$\zh$ is a loop in $S^2\setminus P_f$ based at $p$ and $\zg$ lifts
$\zh$ via $f^n$.  See Figure~\ref{fig:cntrn}.  Let $\widetilde{\zg}$
be a lift of $\zg$ to $D$ via $\zp$, and let $\widetilde{\zh}$ be a
lift of $\widetilde{\zg}$ to $D$ via $F^n$.  Then $\widetilde{\zh}$ is
a lift of $\zh$ to $D$ via $\zp$.  Let $\widetilde{q}$ and
$\widetilde{q}\,'$ be the initial and terminal endpoints of
$\widetilde{\zg}$.  Let $\widetilde{p}$ and $\widetilde{p}\,'$ be the
initial and terminal endpoints of $\widetilde{\zh}$.

  \begin{figure}
\centerline{\includegraphics{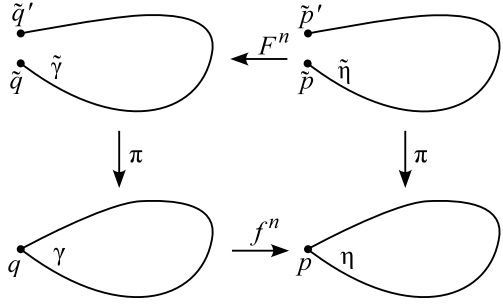}}\caption{Proving statement 1
of Theorem~\ref{thm:cntrn}}
\label{fig:cntrn}
  \end{figure}

Let $h\in G_p$ be the element represented by $\zh$.  We recall the
discussion of the iterates of $\zf$ in Section~\ref{sec:endo}.  We
begin with a path $\zb$ from $p$ to a point in $f^{-1}(p)$.  To
compute $\zf^n$, we take $n-1$ lifts of $\zb$, obtaining a point in
$f^{-n}(p)$.  If that point is $q$, then $h\in \text{dom}(\zf^n)$.  In
general, that point is not $q$.  Nonetheless, there exists a virtual
endomorphism $\zf_n\co G_p\dashrightarrow G_p$ with $h\in
\text{dom}(\zf_n)$ such that $\zf_n$ equals $\zf^n$ precomposed and
postcomposed with conjugations.  These conjugations and $\zf_n$ depend
only on $n$, not on $h$.  So the estimates which we obtain for $\zf_n$
will provide the needed estimates for $\zf^n$.

We seek a lower bound for $\left\|h\right\|$.  Let $a$ be the number
of tiles in $\cR^0(S^2)$.  Then
  \begin{equation*}
\left\|h\right\|=d_{-1}(\widetilde{p},\widetilde{p}\,')\ge 
\frac{1}{a}d_0(\widetilde{p},\widetilde{p}\,')\ge
\frac{1}{a}d_0(\widetilde{q},\widetilde{q}\,')\ge
\frac{1}{a}d_{-1}(\widetilde{q},\widetilde{q}\,')=\frac{1}{a}\left\|g\right\|,
  \end{equation*}
where the equations use the fact that $p$ and $q$ are not in the
1-skeleton of $\cR^{-1}(S^2)$ and the second inequality comes from
statement 1 of Lemma~\ref{lemma:funineqt}.  From this we conclude that
$\left\|h\right\|\to \infty$ as $\left\|g\right\|\to \infty$.

The previous paragraph provides a lower bound for $\left\|h\right\|$.
Now we obtain an upper bound as follows.
  \begin{equation*}
\begin{aligned}
\left\|h\right\| & =d_{-1}(\widetilde{p},\widetilde{p}\,')
&& \text{definitions of }\left\|\cdot \right\|\text{ and }d_{-1}\\
 & \le d_0(\widetilde{p},\widetilde{p}\,')
&& \cR^0(D^*)\text{ refines }\cR^{-1}(D^*)\\
 & = d_0(\widetilde{q},\widetilde{q}\,')
&& \text{statement 4 of Lemma~\ref{lemma:funineqt}}\\
 & \le ad_{-1}(\widetilde{q},\widetilde{q}\,')
&& \text{every level }-1\text{ tile contains }a\text{ level 0-tiles}\\
 & =a\left\|g\right\|
&& \text{definitions of }\left\|\cdot \right\|\text{ and }d_{-1}
\end{aligned}
  \end{equation*}
Moreover, $\left\|\zf_n(h)\right\|$ differs from $\left\|g\right\|$ by
at most an additive constant $K$ (for a fixed $n$) which arises from
the implicit path from $p$ to $q$.  Since there are infinitely many
choices for $g$, the lengths $\left\|g\right\|$ are unbounded.  Hence
the previous paragraph implies that the lengths $\left\|h\right\|$ are
unbounded.  So when we consider all elements $h\in \text{dom}(\zf_n)$,
we find that
  \begin{equation*}
\limsup_{\left\|h\right\|\to
\infty}\frac{\left\|\zf_n(h)\right\|}{\left\|h\right\|}\ge
\limsup_{\left\|g\right\|\to
\infty}\frac{\left\|g\right\|-K}{a\left\|g\right\|}= 
\frac{1}{a}.
  \end{equation*}
Hence
  \begin{equation*}
\limsup_{\left\|k\right\|\to
\infty}\frac{\left\|\zf^n(k)\right\|}{\left\|k\right\|}\ge
\frac{1}{a}.
  \end{equation*}
This is true for infinitely many positive integers $n$.  So taking the
limit supremum of $n$th roots, we find that $\zr_\zf\ge 1$.

This proves statement 1.

To prove statement 2, we assume that $\cR$ is contracting.  Statement
1 of Lemma~\ref{lemma:funineqt} implies that $F$ is distance
nonincreasing.  The following lemma shows that if $\cR$ is
contracting, then some iterate of $F$ is uniformly strictly distance
decreasing in the large.

\begin{lemma}\label{lemma:gmcineqt} If $\cR$ is contracting, then
there exist positive integers $n$ and $N$ and a real number $c$ with
$0<c<1$ such that
  \begin{equation*}
d_0(F^n(x),F^n(y))\le cd_0(x,y)
  \end{equation*}
for every $x,y\in D$ with $d_0(x,y)\ge N$.
\end{lemma}
  \begin{proof} Here is the only place in the proof of statement 2
that we use the fact that $\cR$ is contracting.  Because $\cR$ is
contracting, there exist positive integers $M$ and $n$ such that if
$\zg$ is a taut $d_0$-geodesic in $S^2\setminus P_f$ whose level 0 and
level $n$ decompositions are equal, then $\zg$ has at most $M$
segments.  Let $K$ be the largest valence of a vertex of $\cR^0(D^*)$
in $D$.  Let $N=M+2K$.  This determines the integers $n$ and $N$ in
the statement of the lemma.

Now let $x,y\in D$, and suppose that $d_0(F^n(x),F^n(y))\ge
d_0(x,y)$.  We will prove that $d_0(x,y)<N$.  

We first reduce to the case in which neither $x$ nor $y$ is a vertex
of $\cR^0(D^*)$.  Statement 1 of Lemma~\ref{lemma:funineqt} implies
that $d_0(F^n(x),F^n(y))=d_0(x,y)$.  Now statement 2 of
Lemma~\ref{lemma:funineqt} implies that there exist points $x'$ and
$y'$, which are not vertices of $\cR^0(D^*)$, in the open stars of $x$
and $y$ in $\cR^0(D^*)$ such that $d_0(F^n(x'),F^n(y'))=d_0(x',y')$.
So to prove that $d_0(x,y)<N$, it suffices to prove that
$d_0(x',y')<M$.

But statement 3 of Lemma~\ref{lemma:funineqt} implies that there
exists a taut $d_0$-geodesic joining $F^n(x')$ and $F^n(y')$ whose
level 0 and level $n$ decompositions are equal.  The assumption that
$\cR$ is contracting easily implies that this geodesic has at most $M$
segments.  Hence $d_0(x',y')<M$.

We have proved that if $d_0(x,y)=N$, then $d_0(F^n(x),F^n(y))<N$.  In
general, we choose a $d_0$-geodesic joining $x$ and $y$.  We decompose
it into as many initial segments of length $N$ as possible.  We obtain
this inequality for each such segment.  Statement 1 of
Lemma~\ref{lemma:funineqt} applies to the remainder, and so if
$d_0(x,y)\ge N$, then
  \begin{equation*}
\begin{aligned}
d_0(F^n(x),F^n(y)) & \le (N-1)\left\lfloor\frac{d_0(x,y)}{N}\right\rfloor+
d_0(x,y)-N\left\lfloor\frac{d_0(x,y)}{N}\right\rfloor\\
 & \le d_0(x,y)-\left\lfloor\frac{d_0(x,y)}{N}\right\rfloor
\le d_0(x,y)-\frac{1}{2N}d_0(x,y)\\
 & \le (1-\frac{1}{2N})d_0(x,y).
\end{aligned}
  \end{equation*}

This proves Lemma~\ref{lemma:gmcineqt} with $c=1-\frac{1}{2N}$.
     
\end{proof}

With Lemma~\ref{lemma:gmcineqt} in hand, we proceed as follows.  The
definition of the contraction ratio $\zr_\zf$ involves the limit
supremum of fractions of the form
$\left\|\zf^m(g)\right\|/\left\|g\right\|$ for positive integers $m$.
Write $m=kn+l$ with $n$ as in Lemma~\ref{lemma:gmcineqt} and
nonnegative integers $k$ and $l$ with $0\le l<n$.  Then
  \begin{equation*}
\frac{\left\|\zf^m(g)\right\|}{\left\|g\right\|}=
\frac{\left\|\zf^{kn}(\zf^l(g))\right\|}{\left\|\zf^l(g)\right\|}
\frac{\left\|\zf^l(g)\right\|}{\left\|g\right\|}.
  \end{equation*}
Using the definitions, Figure~\ref{fig:fnleqn}, statement 1 of
Lemma~\ref{lemma:funineqt} and the fact that there are only finitely
many possibilities for $l$, one sees that the last of these three
fractions is bounded for $g\in \text{dom}(\zf^m)$.  It follows that to
prove that $\zr_\zf<1$, we may replace $f$ by $f^n$.  So we have, as
in Lemma~\ref{lemma:gmcineqt}, that $d_0(F(x),F(y))\le cd_0(x,y)$ if
$d_0(x,y)\ge N$.

We choose a basepoint $p$ in the interior of some tile of
$\cR^0(S^2)$.  Let $\widetilde{p}$ be a lift of $p$ to $D$.  Instead
of working with the virtual endomorphism of $G_p$, we work with the
equivalent virtual endomorphism of $\text{Aut}(\zp)$.

Let $m$ be a positive integer.  Finally, set
$K=2d_0(F^m(\widetilde{p}),\widetilde{p})$ and choose $\zs\in
\text{Aut}(\zp)$ such that $\left\|\zf^m(\zs)\right\|\ge N+K$.  Then
we have the following, keeping in mind that when
Lemma~\ref{lemma:gmcineqt} is applied, the choice of $\zs$ implies
that the right side of the inequality is at least $N+K$.

  \begin{equation*}
\begin{aligned}
\left\|\zf^m(\zs)\right\| \hspace*{5pt}
& =&&\parbox{.8\linewidth}{$d_{-1}(\zf^m(\zs)(\widetilde{p}),\widetilde{p})$
   \hfill definitions of $\left\|\cdot \right\|$ and $d_{-1}$}\\
& \le&&\parbox{.8\linewidth}{$d_0(\zf^m(\zs)(\widetilde{p}),\widetilde{p})$
  \hfill $\cR^0(D^*)$ refines $\cR^{-1}(D^*)$}\\
& \le&&\parbox{.8\linewidth}{$d_0(\zf^m(\zs)(\widetilde{p}),
  \zf^m(\zs)(F^m(\widetilde{p})))+
  d_0(\zf^m(\zs)(F^m(\widetilde{p})),F^m(\widetilde{p}))$}\\
& &&\parbox{.8\linewidth}{$
  +\, d_0(F^m(\widetilde{p}),\widetilde{p})$
  \hfill triangle inequality}\\
& =&&\parbox{.8\linewidth}{$d_0(\zf^m(\zs)(F^m(\widetilde{p})),
  F^m(\widetilde{p}))+K$ \hfill $\zf^m(\zs)$ is a $d_0$-isometry}\\
& =&&\parbox{.8\linewidth}{$d_0(F(\zf^{m-1}(\zs)(F^{m-1}(\widetilde{p}))),
  F(F^{m-1}(\widetilde{p})))+K$ \hfill Theorem~\ref{thm:fnleqn}}\\
& \le&&\parbox{.8\linewidth}{$ cd_0(\zf^{m-1}(\zs)(F^{m-1}(\widetilde{p})),
  F^{m-1}(\widetilde{p}))+K$ \hfill choice of $\zs$,}\\
& &&\parbox{.8\linewidth}{\hfill statement 1 of
  Lemma~\ref{lemma:funineqt} and Lemma~\ref{lemma:gmcineqt}}\\
& \hspace*{2mm}\vdots &&\\
&\le&&\parbox{.8\linewidth}{$c^md_0(\zs(\widetilde{p}),\widetilde{p})+K$
  \hfill etc.}\\
& \le&&\parbox{.8\linewidth}{$ac^md_{-1}(\zs(\widetilde{p}),\widetilde{p})+
  K$ \hfill every level $-1$ tile}\\
& &&\parbox{.8\linewidth}{\hfill contains $a$ level 0 tiles}\\
& =&&\parbox{.8\linewidth}{$ac^m\left\|\zs\right\|+K$ \hfill
  definitions of $\left\|\cdot \right\|$ and $d_{-1}$}\\
\end{aligned}
  \end{equation*}
Thus
  \begin{equation*}
\limsup_{\left\|\zs\right\|\to \infty}
\frac{\left\|\zf^m(\zs)\right\|}{\left\|\zs\right\|}\le 
\limsup_{\left\|\zs\right\|\to \infty}\left(ac^m+
\frac{K}{\left\|\zs\right\|}\right)
= ac^m.
  \end{equation*}
Taking the limit supremum of $m$-th roots, we conclude that $\zr_\zf<1$.

This proves Theorem~\ref{thm:cntrn}.

\end{proof}

\begin{thm}\label{thm:ratcntrg} Let $\cR$ be a finite subdivision rule
whose subdivision map $f$ is a Thurston map which is Thurston
equivalent to a map $g$ with the following property.  As for $f$, let
$P_g^\infty$ denote the set of orbifold points for $g$ with orbifold
weight infinity.  There is a neighborhood $U$ of $P_g^\infty$ and a
complete length metric on $K:=S^2-U$ such that $g^{-1}(K) \subset K$
and inverse branches of $g$ uniformly decrease lengths of curves.
Then $\cR$ is contracting.

In particular, if $f$ is equivalent to a rational map, then $\cR$ is
contracting.
\end{thm}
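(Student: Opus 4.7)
My plan is to reduce, via Theorem~\ref{thm:cntrn}, to showing that the orbifold fundamental group virtual endomorphism $\zf$ is contracting. Since the contraction ratio $\zr_\zf$ is invariant under Thurston equivalence (the resulting virtual endomorphisms are conjugate up to inner automorphisms, and conjugation preserves the contraction ratio), I may assume $f=g$ throughout. The engine of the argument is to pull back the length metric on $K$ to the orbifold universal cover, observe that the lift $F$ of $g^{-1}$ strictly contracts the lifted metric, and translate this geometric contraction into word-length decay for $\zf$ via a Milnor--Svarc style quasi-isometry.

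Concretely, the length metric on $K$ pulls back to a complete length metric $\tilde d$ on $\zp^{-1}(K)\subset D$, on which $\text{Aut}(\zp)\cong G_p$ acts by isometries. The condition $g^{-1}(K)\subset K$ ensures that $F$ carries $\zp^{-1}(K)$ into itself, and the uniform contraction hypothesis on $g$ yields a constant $\zl\in(0,1)$ with $\tilde d(F(x),F(y))\le \zl\,\tilde d(x,y)$. Because $K$ is compact, the action of $\text{Aut}(\zp)$ on $\zp^{-1}(K)$ is properly discontinuous with compact quotient, so a Milnor--Svarc-type argument yields constants $A\ge 1$, $B\ge 0$ such that, choosing a basepoint $p\in K$ in the interior of a level $0$ tile and any lift $\tilde p\in\zp^{-1}(K)$,
\begin{equation*}
\tfrac{1}{A}\,\tilde d(\tilde p,\zs(\tilde p)) - B \;\le\; \left\|\zs\right\| \;\le\; A\,\tilde d(\tilde p,\zs(\tilde p)) + B, \qquad \zs\in \text{Aut}(\zp).
\end{equation*}

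Using the intertwining $\zf^n(\zs)\circ F^n = F^n\circ \zs$ from Theorem~\ref{thm:fnleqn} and the fact that $\zf^n(\zs)$ acts as a $\tilde d$-isometry, two applications of the triangle inequality give
\begin{equation*}
\tilde d(\tilde p,\zf^n(\zs)(\tilde p)) \;\le\; 2\,\tilde d(\tilde p,F^n(\tilde p)) + \zl^n\,\tilde d(\tilde p,\zs(\tilde p)).
\end{equation*}
The first term is uniformly bounded in $n$ by telescoping, $\tilde d(\tilde p,F^n(\tilde p))\le \tilde d(\tilde p,F(\tilde p))/(1-\zl)$. Combining with the Milnor--Svarc inequalities yields constants $C_1,C_2$ with $\left\|\zf^n(\zs)\right\|\le C_1\zl^n\left\|\zs\right\|+C_2$ for every $n$ and every $\zs\in \text{dom}(\zf^n)$. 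Hence $\zr_\zf\le \zl<1$, so $\zf$ is contracting by Proposition~2.11.11 of \cite{N}, and $\cR$ is contracting by Theorem~\ref{thm:cntrn}. The ``in particular'' assertion follows because an equivalent rational map carries a standard orbifold metric (hyperbolic when the orbifold is hyperbolic, Euclidean in the Latt\`es case) defined away from a neighborhood of the superattracting periodic cycles comprising $P_f^\infty$, under which inverse branches are uniformly contracting.

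The main obstacle I expect is the Milnor--Svarc step: one must verify that $\zp^{-1}(K)$ is connected, that the pulled-back metric is a proper geodesic length metric, and that the action of $\text{Aut}(\zp)$ remains properly discontinuous and cocompact even though it has nontrivial (cyclic) stabilizers at preimages of the finite-weight orbifold points in $P_f\setminus P_f^\infty$ that lie in $K$. These are standard facts for branched covers over compact surfaces with boundary, but they need to be set up carefully in the orbifold framework; once they are in place, the distance/length comparison and the iterate estimate above proceed without further subtlety.
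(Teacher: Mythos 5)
Your reduction to Theorem~\ref{thm:cntrn} is exactly what the paper does, but the paper disposes of the remaining step---contraction of $\zf$---in a single sentence by citing Theorem~6.4.4 of Nekrashevych's book \cite{N}, whereas you reprove that fact from scratch.  Your argument is correct in its essentials: pull back the length metric to $\zp^{-1}(K)\subset D$, note that $F$ strictly contracts it by some factor $\zl<1$, invoke Theorem~\ref{thm:fnleqn} to write $\zf^n(\zs)\circ F^n=F^n\circ \zs$, apply the triangle inequality and a telescoping bound on $\tilde d(\widetilde{p},F^n(\widetilde{p}))$, and translate back to word length via a Milnor--\v{S}varc comparison.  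This buys you a self-contained proof and makes visible where each hypothesis (the invariance $g^{-1}(K)\subset K$, the uniform contraction of inverse branches, the compactness of $K$) is used, at the cost of having to address the orbifold version of Milnor--\v{S}varc: the action of $\text{Aut}(\zp)$ on $\zp^{-1}(K)$ is properly discontinuous and cocompact but not free (finite cyclic stabilizers at lifts of the finite-weight orbifold points), and you need $\zp^{-1}(K)$ to be connected, which follows because the deformation retraction of $S^2\setminus P_g^\infty$ onto $K$ lifts through $\zp$.  You flag these issues yourself, and they are genuinely the places where a full write-up would need to expend the effort that the citation to \cite{N} sidesteps.  One small point to be explicit about: the inequality $\tilde d(F(x),F(y))\le \zl\,\tilde d(x,y)$ holds because $\tilde d$ is by definition the intrinsic length metric on $\zp^{-1}(K)$, so near-geodesics stay in $\zp^{-1}(K)$ and their $F$-images stay in $\zp^{-1}(g^{-1}(K))\subset\zp^{-1}(K)$; without that the pointwise contraction of $F$ would not follow from the hypothesis on curve lengths.
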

  \begin{proof} Theorem 6.4.4 of Nekrashevych's book \cite{N} implies
that the orbifold fundamental group virtual endomorphism of $f$ is
contracting.  Thus $\cR$ is contracting by Theorem~\ref{thm:cntrn}.

\end{proof}

\section{The fat path subdivision graph}\nosubsections
\label{sec:sdivngraph}

In this section we assume that $f\co S^2\to S^2$ is a Thurston map
which is the subdivision map of a finite subdivision rule $\cR$.  We
will define the fat path subdivision graph of $\cR$.  We will give a
condition on $\cR$ which is equivalent to the condition that this
graph is Gromov hyperbolic.  We will also describe the Gromov boundary
of this graph when it is hyperbolic.

As in Section~\ref{sec:defns}, we have cell complexes $\cR^{-1}(S^2)$,
$\cR^0(S^2)$, $\cR^1(S^2),\ldots$.  The \emph{fat path subdivision
graph} of $\cR$ is defined as follows.  It is a graph $\zG$ with a
vertex $v(t)$ for every tile $t$ of $\cR^n(S^2)$ for integers $n\ge
-1$.  We say that the vertex $v(t)$ \emph{represents} the tile $t$.
Let $t$ be a tile of $\cR^n(S^2)$ for some integer $n\ge -1$.  Then
$v(t)$ is joined by an edge of $\zG$ to $v(s)$ for every subtile $s$
of $t$ in $\cR^{n+1}(S^2)$.  These edges are said to be
\emph{vertical}.  The vertex $v(t)$ is also joined by an edge of $\zG$
to $v(s)$ for every tile $s\ne t$ of $\cR^n(S^2)$ which has an edge in
common with $t$.  These edges are said to be \emph{horizontal}.  The
\emph{skinny path subdivision graph} of $\cR$ is defined in the same
way except that if $s\ne t$ are tiles of $\cR^n(S^2)$, then $v(s)$ and
$v(t)$ are joined by an edge if and only if $s\cap t\ne \emptyset$.
Thus the only difference between these graphs is that in general the
fat path subdivision graph has fewer horizontal edges than the skinny
path subdivision graph.  These graphs are given metrics in the
straightforward way so that every edge has length 1.

For every integer $n\ge -1$, let $\zG_n$ be the subgraph of $\zG$
spanned by all vertices of the form $v(t)$, where $t$ is a tile of
$\cR^n(S^2)$.  Let $\zd_n$ be the path metric on $\zG_n$.  Because
$\zG_n$ might not be connected, $\zd_n$ might take the value $\infty$.
Following Rushton \cite{R}, we define a \emph{transition function}
$f_{m,n}\co \zG_n\to \zG_m$ for all integers $m,n\ge -1$ such that
$m\le n$.  Let $t$ be a tile of $\cR^n(S^2)$.  Then $f(v(t))=v(s)$,
where $s$ is the tile of $\cR^m(S^2)$ which contains $t$.  We extend
$f_{m,n}$ to the edges of $\zG_n$ in the straightforward way, so that
$f_{m,n}$ is a cellular map.

These graphs are called history graphs on page 100 of
\cite{downunder}, where they were briefly introduced.  We feel that
the terminology finite subdivision graph is a bit more descriptive and
it distinguishes our graphs from those studied by Rushton in \cite{R}.
Rushton's graphs depend not only on $\cR$ but also a choice of
$\cR$-subdivision complex $X$ and ideal cells (more about these
later).  For us $X$ is the 2-sphere $S^2$.  Our skinny path
subdivision graph is essentially Rushton's history graph for $X=S^2$
and no ideal cells.

We wish to apply two of Rushton's results, Theorems 5 and 6 of
\cite{R}.  The former gives necessary and sufficient conditions for
hyperbolicity of a graph.  The latter shows that the boundary is
homeomorphic to a certain quotient.  However, Rushton's results do not
apply directly to the fat path subdivision graph $\zG$ because Rushton
in effect works with skinny path subdivision graphs.  These
subdivision graphs are quasi-isometric if $\cR$ has bounded valence,
but they are not quasi-isometric if $\cR$ does not have bounded
valence.  To obviate this difficulty, we introduce a new finite
subdivision rule $\widehat{\cR}$ which is gotten from $\cR$ by
introducing ``ideal tiles'' at vertices of $\cR^0(S^2)$ whose valences
are unbounded under subdivision.  We will see that Rushton's skinny
path subdivision graph for $\widehat{\cR}$ with these ideal tiles is
quasi-isometric to $\zG$, so Rushton's results applied to
$\widehat{\cR}$ describe $\zG$.

In this paragraph we define the finite subdivision rule
$\widehat{\cR}$.  For this, let $v$ be a vertex of $\cR^0(S^2)$ whose
valences are unbounded under subdivision.  We call such vertices
\emph{ideal vertices} of $\cR^0(S^2)$.  The space $S^2\setminus \{v\}$
is homeomorphic to $S^2\setminus D$, where $D$ is a small closed
topological disk containing $v$ in its interior.  Using such a
homeomorphism, the cell structure of $\cR^0(S^2)$ together with the
boundary $\partial D$ of $D$ determine a cell structure on the closure
of $S^2\setminus D$.  The induced cell structure on $\partial D$
together with $v$ determine a cell structure on $D$ by joining every
vertex of $\partial D$ to $v$ with an edge.  We do this for every
ideal vertex $v$ of $\cR^0(S^2)$, in effect replacing $v$ by a closed
disk subdivided into $n$ sectors, where $n$ is the valence of $v$ in
$\cR^0(S^2)$.  One verifies that this determines a new finite
subdivision rule $\widehat{\cR}$.  See Figure~\ref{fig:rhat}.  We view
$\widehat{\cR}^n(S^2)$ as a subcomplex of $\cR^n(S^2)$ for every
nonnegative integer $n$.  Let $\widehat{f}\co S^2\to S^2$ be the
subdivision map of $\widehat{\cR}$.

  \begin{figure}
\centerline{\scalebox{.8}{\includegraphics{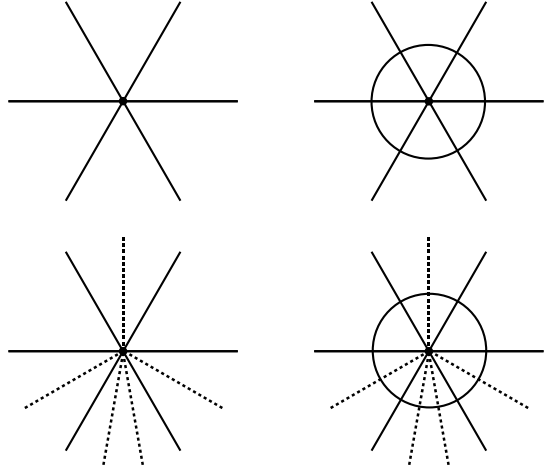}}} \caption{On the
left are $\cR^0(S^2)$ and $\cR^1(S^2)$ near $v$ and on the right are
$\widehat{\cR}^0(S^2)$ and $\widehat{\cR}^1(S^2)$ near $v$, a vertex
of $\cR^0(S^2)$ whose valences are unbounded under subdivision}
\label{fig:rhat}
  \end{figure}

Now we describe Rushton's history graph $\zG(\widehat{\cR},S^2)$.  It
has a base vertex corresponding to the vertex of $\zG_{-1}$.  It also
has one vertex for every closed cell $c$ of $\widehat{\cR}^n(S^2)$
such that $\widehat{f}^n(c)$ does not contain an ideal vertex of
$\cR^0(S^2)$.  A horizontal edge joins vertices representing 
distinct cells of $\widehat{\cR}^n(S^2)$ if and only if one cell is
contained in the other.  Containment also determines vertical edges
just as for $\zG$.  Thus there is a canonical map from the set of
vertices of $\zG$ to the set of vertices of $\zG(\widehat{\cR},S^2)$.
It is easy to see that this map extends to edges, yielding a
quasi-isometry.

Since $\zG$ and $\zG(\widehat{\cR},S^2)$ are quasi-isometric, Theorems
5 and 6 of \cite{R} applied to $\zG(\widehat{\cR},S^2)$ obtain
information about $\zG$, giving us Theorems~\ref{thm:rushtona} and
\ref{thm:rushtonb}.

\begin{thm}\label{thm:rushtona} Let $\cR$ be a finite subdivision rule
whose subdivision map is a Thurston map.  Then the fat path
subdivision graph $\zG$ of $\cR$ is Gromov hyperbolic if and only if
there exist positive integers $M$ and $n$ with the following property.
Let $u$ and $v$ be vertices of $\zG_m$ for some nonnegative integer
$m$ such that $\zd_m(u,v)\ge M$.  Recall that there is a transition
function $f_{m,m+n}\co \zG_{m+n}\to \zG_m$.  Let $u'$ and $v'$ be
vertices of $\zG_{m+n}$ such that $f_{m,m+n}(u')=u$ and
$f_{m,m+n}(v')=v$.  Then $\zd_{m+n}(u',v')>\zd_m(u,v)$.
\end{thm}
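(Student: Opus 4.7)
The plan is to reduce the theorem to Rushton's Theorem 5 of \cite{R}, applied to the history graph $\zG(\widehat{\cR},S^2)$ of the auxiliary finite subdivision rule $\widehat{\cR}$ that was just constructed. The whole point of introducing $\widehat{\cR}$ is precisely to place ourselves within Rushton's framework: in $\widehat{\cR}$, all vertices have bounded valence under subdivision, so Rushton's skinny path history graph is quasi-isometric to the fat path graph $\zG$ of $\cR$ (this quasi-isometry was already asserted in the paragraphs preceding the theorem). Since Gromov hyperbolicity is a quasi-isometry invariant, $\zG$ is hyperbolic if and only if $\zG(\widehat{\cR},S^2)$ is hyperbolic, and Rushton's Theorem 5 gives the latter a necessary and sufficient combinatorial criterion in terms of the transition functions on the level subgraphs.

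The work is then to show that Rushton's criterion, when phrased on $\widehat{\cR}$, is equivalent to the criterion stated here on $\cR$. First I would unpack Rushton's hypothesis: there exist $M,n$ such that for any $m$ and any two vertices $\widehat u,\widehat v$ at level $m$ of $\zG(\widehat{\cR},S^2)$ with level-$m$ distance $\geq M$, any choices of preimages $\widehat u',\widehat v'$ at level $m+n$ under the transition map have strictly larger level-$(m+n)$ distance. Then I would transport this statement across the quasi-isometry. The canonical vertex map from $\zG$ to $\zG(\widehat{\cR},S^2)$ intertwines the transition functions $f_{m,m+n}$ on each side, because both are defined by the geometric containment of a child tile inside its parent, and the ideal tiles added in $\widehat{\cR}$ do not affect which ordinary tile of $\cR^m(S^2)$ contains a given ordinary tile of $\cR^{m+n}(S^2)$. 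Therefore a violation of the criterion on the $\cR$-side corresponds, up to the uniform additive and multiplicative quasi-isometry constants, to a violation on the $\widehat{\cR}$-side, and conversely.

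To make that passage rigorous, I would note that the level subgraphs $\zG_m$ (of $\zG$) and the corresponding level subgraphs of $\zG(\widehat{\cR},S^2)$ differ only by the sector subdivisions near ideal vertices; edges between ordinary tiles sharing an edge of $\cR^m(S^2)$ appear in both graphs, and extra edges in $\zG_m$ coming from a large fan around an ideal vertex are exactly what the introduction of $\widehat{\cR}$ replaces by a uniformly bounded path through the added ideal tiles. Consequently the level-$m$ distance between two vertices representing ordinary tiles is the same in the two frameworks up to a universal additive error, and one absorbs this error by enlarging the constant $M$ and the integer $n$ in one direction, or by shrinking them in the other.

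The main obstacle is this last bookkeeping: checking that the quasi-isometry preserves the \emph{strict inequality} $\zd_{m+n}(u',v')>\zd_m(u,v)$ uniformly in $m$, not merely a coarse inequality. The standard trick is to replace $n$ by a multiple $kn$ of itself: iterating the strict inequality gives an arbitrarily large gain on the $\widehat{\cR}$-side, which is more than enough to dominate the bounded additive slippage introduced by the quasi-isometry, and then pulling back yields the desired strict inequality on the $\cR$-side (and vice versa). Once this calibration is done, Rushton's theorem does all the real work.
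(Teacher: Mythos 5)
Your proposal is correct and takes essentially the same route as the paper: the paper itself gives only a one-sentence proof, asserting that Theorem~\ref{thm:rushtona} follows from Rushton's Theorem 5 applied to $\zG(\widehat{\cR},S^2)$ together with the quasi-isometry between $\zG$ and $\zG(\widehat{\cR},S^2)$, and your proposal spells out the bookkeeping the paper leaves implicit (in particular, the iteration trick for transferring the strict inequality across the quasi-isometry).
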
 

To emphasize the similarity between the condition in
Theorem~\ref{thm:rushtona} and the condition of being contracting, we
restate the condition in Theorem~\ref{thm:rushtona} as follows.  In
our usual situation, we say that a curve $\zg$ in $S^2\setminus P_f$
is a $\zd_m$-geodesic for some nonnegative integer $m$ if the tiles of
$\cR^m(S^2)$ which meet $\zg$ are exactly the tiles represented by the
vertices of a $\zd_m$-geodesic in $\zG_m$.  The condition of
Theorem~\ref{thm:rushtona} can be restated as follows.  There exist
positive integers $M$ and $n$ with the following property.  If $\zg$
is a taut $\zd_m$-geodesic for some nonnegative integer $m$ with level
$m$ and level $m+n$ decompositions which are equal, then the number of
level $m$ segments in $\zg$ is at most $M$.  We say that $\cR$ is
\emph{graph hyperbolic} if it satisfies this condition.  The only
difference between $\cR$ being contracting and $\cR$ being graph
hyperbolic is that the former definition deals with $d_0$-geodesics
and the latter definition deals with $\zd_m$-geodesics.

We next make a definition to prepare for the next theorem.  Let $\cR$
be a finite subdivision rule whose subdivision map is a Thurston map.
Let $X$ be the topological space gotten by deleting from $S^2$ the
open star of every vertex of $\widehat{\cR}^n(S^2)$ whose valences are
unbounded under subdivision for every nonnegative integer $n$.  Now
define a relation $\sim$ on $X$ as follows.  Let $x,y\in X$.  Let
$s_n$ and $t_n$ be tiles of $\cR^n(S^2)$ which contain $x$ and $y$,
respectively, for every nonnegative integer $n$.  Let $v_n(x)$ and
$v_n(y)$ be the vertices of $\zG_n$ representing $s_n$ and $t_n$,
respectively.  Then $x\sim y$ if and only if the distances
$\zd_n(v_n(x),v_n(y))$ are bounded as $n$ varies over the nonnegative
integers.  The relation $\sim$ is an equivalence relation.

\begin{thm}\label{thm:rushtonb} Let $\cR$ be a graph hyperbolic finite 
subdivision rule whose subdivision map is a Thurston map.  Then the
boundary of $\zG$ is homeomorphic to the space $X$ defined immediately
above modulo the relation $\sim$.
\end{thm}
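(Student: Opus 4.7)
The plan is to deduce this theorem from Theorem 6 of Rushton \cite{R} applied to the history graph $\zG(\widehat{\cR},S^2)$, together with the quasi-isometry between $\zG$ and $\zG(\widehat{\cR},S^2)$ established immediately before the theorem statement. Since Gromov hyperbolicity and the Gromov boundary are quasi-isometry invariants, once we know that $\cR$ is graph hyperbolic (so $\zG$ is Gromov hyperbolic by Theorem~\ref{thm:rushtona}), the same holds for $\zG(\widehat{\cR},S^2)$, and we have a canonical homeomorphism $\partial \zG \cong \partial \zG(\widehat{\cR},S^2)$. The task reduces to matching Rushton's description of $\partial \zG(\widehat{\cR},S^2)$ with $X/{\sim}$.

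Next I would invoke Rushton's Theorem 6, which identifies the boundary of his history graph with the quotient of an appropriate subset of the underlying subdivision complex by a bounded-distance equivalence relation. For $\widehat{\cR}$, Rushton excludes from his vertex set precisely those cells whose images contain an ideal vertex, which corresponds to deleting the (nested) open stars of the vertices of $\widehat{\cR}^n(S^2)$ whose valences grow without bound under further subdivision. The resulting subset of $S^2$ is exactly the space $X$ appearing in the statement.

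The third step is to verify that Rushton's equivalence relation on $X$ coincides with our $\sim$. Given $x \in X$, the fact that $x$ lies outside all the deleted open stars means that for each $n$ there is a well-defined tile (or short list of tiles, if $x$ lies on an edge) of $\widehat{\cR}^n(S^2)$ containing $x$, and under the canonical map from vertices of $\zG$ to vertices of $\zG(\widehat{\cR},S^2)$ these correspond to the tiles $s_n$ of $\cR^n(S^2)$ containing $x$ used to define $\sim$. The quasi-isometry passes through the level-$n$ subgraphs with uniform constants, so $\zd_n(v_n(x),v_n(y))$ is bounded in $n$ if and only if the corresponding distances in Rushton's level-$n$ subgraphs are bounded. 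This gives $X/{\sim} \cong \partial \zG(\widehat{\cR},S^2) \cong \partial \zG$.

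The main obstacle I anticipate is the bookkeeping in the third step, specifically two points: first, checking that $\sim$ is well-defined independently of the choices of $s_n, t_n$ when $x$ or $y$ lies on lower-dimensional skeleta (this follows from the fact that the ambiguity contributes only a bounded additive error in $\zd_n$, since any two tiles of $\cR^n(S^2)$ containing a common point are joined by a single horizontal edge in $\zG_n$); and second, carefully translating between the horizontal-edge conventions in Rushton's history graph (containment of cells of possibly different dimensions in the same tile) and our fat-path graph (edge-adjacency of top-dimensional tiles), where the introduction of the ideal sectors in $\widehat{\cR}$ does most of the work. Once these compatibilities are confirmed, the theorem is immediate from Rushton's Theorem 6.
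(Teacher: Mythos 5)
Your proposal matches the paper's approach exactly: the paper derives Theorem~\ref{thm:rushtonb} by applying Theorem 6 of Rushton~\cite{R} to the history graph $\zG(\widehat{\cR},S^2)$ and transporting the result to $\zG$ via the quasi-isometry established just before the theorem statement. The paper states this in one sentence without elaboration, so the bookkeeping you sketch in your third step (matching $\sim$ with Rushton's equivalence relation and handling ambiguities on lower skeleta) is a reasonable fleshing-out of details the authors leave implicit rather than a departure from their argument.
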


The following lemma gives a slightly different interpretation of the
equivalence relation $\sim$.

\begin{lemma}\label{lemma:tilde} The equivalence relation $\sim$
is generated by the relation $\approx $ defined as follows.  Let
$x,y\in X$.  Then $x\approx y$ if and only if for every nonnegative
integer $n$ there exists a tile of $\cR^n(S^2)$ which contains both
$x$ and $y$.
\end{lemma}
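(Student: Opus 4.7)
The plan is to establish the two inclusions between the equivalence relations generated by $\approx$ and by $\sim$.

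The inclusion $\approx \subseteq \sim$ is immediate: if $x \approx y$, then for every $n$ we may take $s_n = t_n$ to be a common tile of $\cR^n(S^2)$ containing both $x$ and $y$, so $\zd_n(v_n(x), v_n(y)) = 0$ and hence $x \sim y$.  Since $\sim$ is already an equivalence relation, the equivalence relation generated by $\approx$ is contained in $\sim$.

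For the reverse inclusion, suppose $x \sim y$ and fix $K$ with $\zd_n(v_n(x), v_n(y)) \le K$ for all $n \ge 0$.  For each $n$, choose a $\zd_n$-geodesic $v_n(x) = v(T_{n,0}), v(T_{n,1}), \ldots, v(T_{n,K}) = v_n(y)$ in $\zG_n$, padding with repetitions if the true geodesic is shorter than $K$; consecutive distinct tiles $T_{n,i-1}, T_{n,i}$ then share an edge of $\cR^n(S^2)$.  For $1 \le i \le K$, pick $p_{n,i} \in T_{n,i-1} \cap T_{n,i}$ in the relative interior of the common edge, and set $p_{n,0} = x$, $p_{n,K+1} = y$.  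The key observation is that $p_{n,i}$ and $p_{n,i+1}$ both lie in the single tile $T_{n,i}$ for every $0 \le i \le K$.

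Next I would apply a compactness/diagonal argument.  Since $S^2$ is compact and there are only $K+2$ sequences $(p_{n,i})_n$, one may pass to a subsequence $n_k$ along which $p_{n_k, i} \to z_i \in S^2$ for each $i$, with $z_0 = x$ and $z_{K+1} = y$.  By a further (Cantor-style) diagonalization over the countable data of pairs $(m,i)$ with $m \ge 0$ and $0 \le i \le K$, one may assume that for every such $(m,i)$ the level-$m$ tile containing $T_{n_k, i}$ is eventually a fixed tile $T^{(m)}_i$ of $\cR^m(S^2)$, which is possible because each $\cR^m(S^2)$ has only finitely many tiles.  Then for every $m$ and $i$, $\{p_{n_k, i}, p_{n_k, i+1}\} \subseteq T_{n_k, i} \subseteq T^{(m)}_i$ for $k$ large, and passing to the limit in the closed tile gives $z_i, z_{i+1} \in T^{(m)}_i$.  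Thus for every level $m$ some tile of $\cR^m(S^2)$ contains both $z_i$ and $z_{i+1}$, which is precisely $z_i \approx z_{i+1}$, and concatenation yields the chain $x = z_0 \approx z_1 \approx \cdots \approx z_{K+1} = y$.

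The main obstacle is verifying that each intermediate $z_i$ lies in $X$, since $\approx$ is defined as a relation on $X$.  Choosing $p_{n,i}$ in the relative interior of a shared edge of $\cR^n$ ensures $p_{n,i} \in X$ at every finite stage, but the limit $z_i$ could a priori approach an ideal vertex $v$.  I would handle this by observing that if $z_i$ were to lie in the open star of $v$ in $\widehat{\cR}^m(S^2)$ for some $m$, then the nested sequence of common tiles $T^{(m')}_i$ for $m' \ge m$ would force $z_i$ to accumulate at $v$ itself; one then perturbs $z_i$ slightly to a point $z_i' \in X$ lying in each $T^{(m')}_i$ simultaneously (using that the sectored structure of $\widehat{\cR}$ near $v$ leaves nontrivial portions of $T^{(m')}_i$ outside the open star of $v$) and inserts an extra $\approx$-step between $z_i$ and $z_i'$, producing a chain that stays inside $X$.
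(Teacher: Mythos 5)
The paper's own ``proof'' is a single sentence (``This is an exercise in point set topology''), so there is no argument in the paper to compare against directly.  Your compactness-and-diagonalization scheme for the direction $\sim\ \Rightarrow\ (\text{chain of }\approx)$ is the natural argument and the core of it is sound: passing to a subsequence so that each $p_{n_k,i}\to z_i$, then diagonalizing so that for each fixed $(m,i)$ the level-$m$ ancestor of $T_{n_k,i}$ stabilizes to a tile $T^{(m)}_i$ (the $T^{(m)}_i$ are then automatically nested in $m$), and finally passing to the limit inside the closed tile $T^{(m)}_i$ to get $z_i,z_{i+1}\in T^{(m)}_i$ --- all of this is correct.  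The forward inclusion $\approx\ \subseteq\ \sim$ is also fine.

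The genuine gap is exactly where you flagged it: keeping the intermediate points in $X$.  You correctly notice that this is the crux, but your resolution does not hold up. First, the assertion that ``choosing $p_{n,i}$ in the relative interior of a shared edge of $\cR^n$ ensures $p_{n,i}\in X$'' is not justified and is in general false: $X$ is defined by removing, at \emph{every} level $m$, the open stars of the unbounded-valence vertices of $\widehat{\cR}^m(S^2)$, and a level-$n$ edge can lie entirely inside the level-$m$ open star of an ideal vertex for some $m<n$, so its interior points need not lie in $X$.  (Had $p_{n,i}\in X$ held, you could have closed the argument immediately, since $X$ is closed --- it is the complement of a union of open stars --- and limits of points of $X$ stay in $X$.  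That observation is worth making explicit in any case.)  Second, the ``accumulate at $v$ itself'' step is not a proof: nothing forces $z_i$ to equal $v$ just because $z_i$ lies in the open star of $v$ at some level and in all the $T^{(m)}_i$. Third, and most concretely, the proposed repair is structurally incomplete: if you replace the offending $z_i\notin X$ by some $z_i'\in X$, to keep the chain you need $z_{i-1}\approx z_i'\approx z_{i+1}$, which forces $z_i'\in T^{(m)}_{i-1}\cap T^{(m)}_i$ for every $m$, not merely $z_i'\in T^{(m)}_i$ as you require; and the phrase ``inserts an extra $\approx$-step between $z_i$ and $z_i'$'' cannot be carried out as stated, since $\approx$ is only defined on $X$ and $z_i\notin X$.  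Until you either (a) show the $p_{n,i}$ can actually be chosen in $X$ (so closedness of $X$ finishes it), or (b) prove that $\bigl(\bigcap_m T^{(m)}_{i-1}\bigr)\cap\bigl(\bigcap_m T^{(m)}_i\bigr)$ meets $X$ whenever $z_{i-1},z_{i+1}\in X$, the reverse inclusion is not established.
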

  \begin{proof} This is an exercise in point set topology.
\end{proof}

Let $\cR$ be a graph hyperbolic finite subdivision rule whose
subdivision map is a Thurston map.  Theorem~\ref{thm:rushtonb} and
Lemma~\ref{lemma:tilde} imply that the boundary of the fat path
subdivision graph of $\cR$ can be constructed in two steps.  In the
first step, we delete from $S^2$ an open topological disk about every
vertex of $\cR^n(S^2)$ whose valences are unbounded under subdivision
for every nonnegative integer $n$.  In the second step, we identify
two points of this space if (but not only if) some tile of
$\cR^n(S^2)$ contains both of them for every nonnegative integer $n$.
The first step is trivial if and only if $\cR$ has bounded valence.
The second step is trivial if and only if the mesh of $\cR$ approaches
0.

Using Theorem~\ref{thm:rushtona}, we next prove that if $\cR$ is
contracting, then $\cR$ is graph hyperbolic.

\begin{thm}\label{thm:cntrghpbc} Let $\cR$ be a finite subdivision
rule whose subdivision map $f$ is a Thurston map.  If $\cR$ is
contracting, then it is graph hyperbolic and hence its fat path
subdivision graph is Gromov hyperbolic.
\end{thm}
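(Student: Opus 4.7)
The plan is to verify graph hyperbolicity of $\cR$ directly by establishing the restated condition given immediately after Theorem~\ref{thm:rushtona}. The contracting hypothesis supplies constants $M$ and $n$ controlling taut $d_0$-geodesics in $S^2\setminus P_f$ whose level-$0$ and level-$n$ decompositions coincide. I will argue that these same constants witness graph hyperbolicity at every level $m\ge 0$, thereby obtaining Gromov hyperbolicity of the fat path subdivision graph via Theorem~\ref{thm:rushtona}. The main mechanism is the homeomorphism $F^m\co D\to F^m(D)\subseteq D$, which carries the level-$0$ cell structure on $D$ bijectively onto the level-$m$ cell structure restricted to $F^m(D)$, and hence lets one transport a level-$m$ situation in the orbifold universal cover into an equivalent level-$0$ situation where the contracting hypothesis applies.

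Concretely, fix $m\ge 0$ and let $\zg$ be a taut $\zd_m$-geodesic in $S^2\setminus P_f$ with level-$m$ and level-$(m+n)$ decompositions equal. Lift $\zg$ to $\widetilde{\zg}$ in $D$. A short projection argument yields that $\widetilde{\zg}$ is a $d_m$-geodesic in $D$ between its endpoints: any candidate $D$-curve projects to a candidate $S^2$-curve with the same level-$m$ tile count, forcing the $D$-infimum to equal the $\zd_m$-length of $\zg$. Using the $\text{Aut}(\zp)$-action, I would choose a deck transformation $\zs$ so that $\zs\widetilde{\zg}\subseteq F^m(D)$, and set $\widetilde{\alpha}=F^{-m}(\zs\widetilde{\zg})$. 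Under the cellular bijection $F^m$, levels $0$ through $n$ of the cell structure on $D$ correspond to levels $m$ through $m+n$ of the cell structure on $F^m(D)$, so $\widetilde{\alpha}$ is a $d_0$-geodesic in $D$ whose level-$0$ and level-$n$ decompositions coincide and whose number of level-$0$ tiles equals the number of level-$m$ tiles of $\widetilde{\zg}$. Projecting, the curve $\alpha=\zp(\widetilde{\alpha})=f^m(\zg)$ is a taut $d_0$-geodesic in $S^2\setminus P_f$ whose level-$0$ and level-$n$ decompositions are equal. The contracting hypothesis then bounds the level-$0$ segment count of $\alpha$ by $M$, so $\zg$ has at most $M$ level-$m$ segments.

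The expected main obstacle is arranging $\zs\widetilde{\zg}\subseteq F^m(D)$ with a single deck transformation. Since $F^m(D)$ contains exactly one level-$m$ subtile inside each level-$0$ tile of $D$, a single $\zs$ may not simultaneously route all level-$m$ tiles of $\widetilde{\zg}$ into $F^m(D)$. I plan to handle this by exploiting the equality of the level-$m$ and level-$(m+n)$ decompositions of $\zg$, which forces $\widetilde{\zg}$ to cross only genuine level-$m$ edges and not finer subdivisions of them, together with the $\text{Aut}(\zp)$-equivariance of the cell structures, constructing $\widetilde{\alpha}$ piece by piece across consecutive level-$m$ tiles and then verifying, with the help of Lemma~\ref{lemma:funineqt}, that the assembled curve is a genuine $d_0$-geodesic with the required decomposition property. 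Once the level-$m$ statement is established with these uniform constants, $\cR$ is graph hyperbolic and Theorem~\ref{thm:rushtona} yields Gromov hyperbolicity of the fat path subdivision graph, completing the proof.
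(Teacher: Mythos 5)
Your central idea is the same as the paper's: transport a level-$m$ situation into a level-$0$ situation inside the orbifold universal cover using the lift $F$ of $f^{-1}$, then invoke the contracting hypothesis at level $0$.  So the strategy is sound, and your observation that the bijection $F^m$ carries the level-$0$ cell structure of $D^*$ to the level-$m$ cell structure on its image is precisely the right mechanism.

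However, there are two points where the execution as written doesn't quite hold together, and the paper's proof shows a cleaner route around both.  First, the attempt to find a single deck transformation $\zs$ with $\zs\widetilde{\zg}\subseteq F^m(D)$ is, as you already suspected, doomed in general, and the ``piece by piece'' fix you sketch is essentially a re-derivation of the standard path-lifting property for the map $\zp\circ F^m\co D\to S^2$.  That composite is a covering map over $S^2\setminus P_f$ (it satisfies $f^m\circ(\zp\circ F^m)=\zp$, and the branch set is carried to $P_f$), so a taut curve $\zg$ at level $m$ lifts in one step to a curve $\widetilde{\alpha}$ in $D$, with no assembly required.  The paper lifts the level-$m$ taut arc via $\zp\circ F^m$ directly and thereby skips both the intermediate lift $\widetilde{\zg}$ and the $F^{-m}$ pull-back; the extra structure you invoke (equality of level-$m$ and level-$(m+n)$ decompositions forcing crossings through genuine level-$m$ edges) is not actually needed for the lifting to exist, only later when applying the contracting hypothesis.

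Second, the verification that $\widetilde{\alpha}$ is a $d_0$-geodesic is not naturally an application of Lemma~\ref{lemma:funineqt}.  The paper's argument is a direct covering-projection count: since $F^m$ is a cellular bijection between level-$0$ tiles and their level-$m$ images, $N+1$ level-$m$ tiles meeting the original arc become $N+1$ level-$0$ tiles meeting $\widetilde{\alpha}$, so $d_0(\widetilde{p},\widetilde{q})\le N$; and equality must hold, else a shorter level-$0$ curve in $D^*$ would push down via $\zp\circ F^m$ to a level-$m$ curve in $S^2$ meeting fewer than $N+1$ tiles, contradicting $\zd_m(u,v)=N$.  None of the four statements of Lemma~\ref{lemma:funineqt}, which compare $d_0$ with $d_0\circ(F^n\times F^n)$, directly address geodesicity under the covering $\zp\circ F^m$, so that appeal would need to be replaced by the projection argument.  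With those two substitutions your outline becomes the paper's proof.
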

  \begin{proof} Theorem~\ref{thm:rushtona} implies that it suffices to
prove that $\cR$ is graph hyperbolic.  Let $\zG$ be the fat path
subdivision graph of $\cR$.  

Because $\cR$ is contracting, there exist positive integers $M$ and
$n$ such that the following condition is satisfied.  If $\zg$ is a
taut $d_0$-geodesic in $S^2\setminus P_f$ with level 0 and level $n$
decompositions which are equal, then the number of level 0 segments in
$\zg$ is at most $M$.

Now we begin to verify the condition of Theorem~\ref{thm:rushtona} for
these values of $M$ and $n$.  Let $u$ and $v$ be vertices of $\zG_m$
for some nonnegative integer $m$ such that $\zd_m(u,v)\ge M$.  Set
$N=\zd_m(u,v)$.  Let $u'$ and $v'$ be vertices of $\zG_{m+n}$ such
that $f_{m,m+n}(u')=u$ and $f_{m,m+n}(v')=v$.  The vertices $u$ and
$v$ represent tiles $s$ and $t$ of level $m$, and $u'$ and $v'$
represent tiles $s'$ and $t'$ of level $m+n$ with $s'\subseteq s$ and
$t'\subseteq t$.  Let $p$ and $q$ be points in the interiors of $s'$
and $t'$, respectively.  Because $\zd_m(u,v)=N$, there exists a taut
arc $\za$ in $S^2\setminus P_f$ joining $p$ and $q$ with a level $m$
decomposition containing $N+1$ segments.

Let $\widetilde{\za}$ be a curve in $D$ which lifts $\za$ via
$\zp\circ F^m\co D\to S^2$.  Let $\widetilde{p}$ and $\widetilde{q}$
be the endpoints of $\widetilde{\za}$.  Because $N+1$ tiles of
$\cR^m(S^2)$ cover $\za$, it follows that $N+1$ tiles of $\cR^0(D^*)$
cover $\widetilde{\za}$.  So $d_0(\widetilde{p},\widetilde{q})\le N$.
This inequality cannot be strict because otherwise there exists a
curve in $\cR^0(D^*)$ joining $\widetilde{p}$ and $\widetilde{q}$
covered by at most $N$ tiles of $\cR^0(D^*)$.  This curve maps by
$\zp\circ F^m$ to a curve in $\cR^m(S^2)$ joining $p$ and $q$.  It
would follow that $\zd_m(u,v)<N$, which is not true.  Hence
$d_0(\widetilde{p},\widetilde{q})=N$ and $\widetilde{\za}$ is a taut
$d_0$-geodesic.

Let $\zg=\zp(\widetilde{\za})$.  Then $\zg$ is a taut $d_0$-geodesic
in $S^2\setminus P_f$ whose level 0 decomposition has $N+1$ segments.
Because $N\ge M$, the contraction condition which $\cR$ satisfies
implies that the level $n$ decomposition of $\zg$ does not equal the
level 0 decomposition of $\zg$.  In other words, the level $n$
decomposition of $\zg$ has more than $N+1$ segments.  In turn, the
level $m+n$ decomposition of $\za$ has more than $N+1$ segments.  This
implies that $p$ and $q$ cannot be joined by a curve whose level $m+n$
decomposition has at most $N+1$ segments.  Thus
$\zd_m(u',v')>N=\zd_m(u,v)$.

This proves Theorem~\ref{thm:cntrghpbc}.

\end{proof}

See the discussion after Theorem~\ref{thm:levy} for examples which
show that it is possible for a finite subdivision rule to be graph
hyperbolic without being contracting.

\begin{ex}\label{ex:nhpbc} In practice, finite subdivision rules which
one encounters whose subdivision maps are Thurston maps are almost all
graph hyperbolic.  Here is an example of a Thurston map $f$ which is the
subdivision map of a finite subdivision rule $\cR$ which is not graph
hyperbolic.  The 1-skeleton of $\cR^0(S^2)$ is a simple closed curve
decomposed into four edges $a$, $b$, $c$, $d$.  So $\cR^0(S^2)$ has
two tiles, $t_1$ and $t_2$.  The subdivisions of $t_1$ and $t_2$,
which are reflections of each other, are shown in
Figure~\ref{fig:nhpbc}.

  \begin{figure} \centerline{\includegraphics{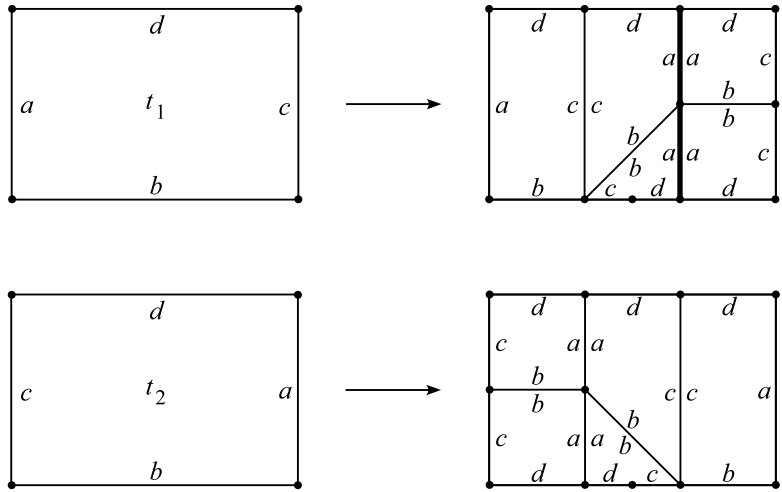}}
\caption{A finite subdivision rule which is not graph hyperbolic}
\label{fig:nhpbc}
  \end{figure}

In this paragraph we define a sequence $\za_1$, $\za_2$,
$\za_3,\ldots$ of arcs in $t_1$.  Two edges which $f$ maps to edge $a$
in the subdivision of $t_1$are drawn with thick line segments.  Let
$\za_1$ be the arc which is the union of these two thick edges in
$t_1$.  We inductively see that for every positive integer $n$ there
exists an arc $\za_n$ joining the top and bottom of $t_1$ consisting
of $2^n$ edges of $\cR^n(t_1)$.  Each of these $2^n$ edges maps to
edge $a$ under $f^n$.  Each of these arcs is to the right of the
previous one.  The open star $S_n$ of $\za_n$ in $\cR^n(t_1)$ is
combinatorially a rectangle tiled by $2^n$ rows and 2 columns of
squares.

The fat path distance between the endpoints of $\za_n$ in $S_n$ is
$2^n+1$.  However, a fat path geodesic joining the endpoints of
$\za_n$ in $\cR^n(t_1)$ might leave $S_n$, and the fat path distance
in $\cR^n(t_1)$ between these points might be less than $2^n+1$.  So
let $m$ be a nonnegative integer, and consider $\cR^m(S_n)$.  Using
the fact that the tiles in both $\cR(t_1)$ and $\cR(t_2)$ can be
organized into three columns, we see that the tiles of $\cR^m(S_n)$
can be organized into $2\cdot 3^m$ columns.  A fat path in
$\cR^{n+m}(t_1)$ starting at an endpoint of $\za_n$ must traverse at
least $3^m$ tiles to leave $\cR^m(S_n)$.  It follows that if $m$ is
large enough, then the fat path distance between the endpoints of
$\za_n$ in $\cR^{n+m}(t_1)$ is exactly $2^n+1$.  The same holds for
$\cR^{n+m}(t_2)$.  So the fat path distance between these points in
$\cR^{n+m}(S^2)$ is independent of $m$ and greater than $2^n$ if $m$
is large enough.  Thus $\cR$ is not graph hyperbolic, since the fat
path subdivision graph will contain arbitrarily large embedded
geodesic square grids.
\end{ex}

\noindent{\bf Remarks:}
\begin{enumerate}

\item Example 7.5 may be generalized by thinking of it as a local
obstruction. If the fsr $\cR$ of this example is a sub-fsr of another
fsr $\cR'$, then the same argument shows that the subdivision complex
for $\cR'$ will not be hyperbolic either.

\item The vertical curve joining sides $b, d$ of Example 7.5 forms a
Thurston obstruction whose corresponding eigenvalue is $1+1/2+1/2=2$.
Obstructions to hyperbolicity are not detectable just by looking at
such eigenvalues. The map $(x,y) \mapsto (2x, y)$ descends to an
affine map which is the subdivision map of a subdivision rule
$\cR'$ whose tiles are images of the usual integral unit squares under
the natural projection. This map also has a Thurston obstruction with
eigenvalue $2$. However, $\cR'$ is contracting and the fat path
subdivision complex is hyperbolic.

\end{enumerate}

\section{The selfsimilarity complex}
\label{sec:selfsim}\nosubsections

This section is devoted to relating the fat path subdivision graph
$\zG$ of $\cR$ with a selfsimilarity complex $\zS$ of $f$.  We recall
the definition of $\zS$.

As in Section~\ref{sec:endo}, we choose a basepoint $p\in S^2\setminus
P_f$.  Let $A$, which we view as an alphabet, be a finite set with the
same cardinality as $f^{-1}(p)$.  For every nonnegative integer $n$,
let $A^n$ be the set of words of length $n$ in $A$, where
$A^0=\{\emptyset\}$.  Set $A^*=\cup _nA^n$.

We next define a bijection $\zL\co A^*\to \cup _n(f^{-n}(p)\times
\{n\})$ so that $\zL(A^n)=f^{-n}(p)\times \{n\}$ for every nonnegative
integer $n$.  The action of $\zL$ on $A^0$ is forced.  Define $\zL$ on
$A=A^1$ to be any bijection to $f^{-1}(p)\times \{1\}$.  For each
$x\in A$, let $\zl_x$ be an arc in $S^2\setminus P_f$ from $p$ to the
first component of $\zL(x)$.  Now let $n$ be a positive integer for
which $\zL$ is defined on $A^n$.  Let $x\in A$ and $w\in A^n$.  Then
$\zL(xw)=(q,n+1)\in f^{-(n+1)}(p)\times \{n+1\}$, where $q$ is the
terminal endpoint of the $f^n$-lift of $\zl_x$ whose initial endpoint
is $\zL(w)$.  This defines $\zL$.

Let $G_p$ be the orbifold fundamental group of $f$ as in
Section~\ref{sec:endo}.  Let $g\in G_p$\,, and let $w\in A^n$ for some
nonnegative integer $n$.  We define $g.w$ as follows.  We choose a
loop $\zg$ in $S^2\setminus P_f$ at $p$ representing $g$.  Then $g.w$
is the element of $A^n$ such that the first component of $\zL(g.w)$ is
the terminal endpoint of the $f^n$-lift of $\zg$ whose initial
endpoint is the first component of $\zL(w)$.  Let $S$ be a generating
set for $G_p$.

Now we define the selfsimilarity complex $\zS$.  It is a graph whose
vertex set is $A^*$.  Given $w\in A^*$, a horizontal edge joins $w$
and $s.w$ for every $s\in S$.  If we have $x\in A$ in addition to
$w\in A^*$, then a vertical edge joins $w$ and $xw$.  We define a
metric on $\zS$ in the straightforward way so that every edge has
length 1.  This defines $\zS$.  Different choices of $p$, $S$ and arcs
$\zl_x$ obtain quasi-isometric graphs.  Let $\zS_n$ denote the
subgraph of $\zS$ spanned by $A^n$ for every nonnegative integer $n$.

We discuss Figure~\ref{fig:selfsim} in this paragraph.  We prefer to
work with special choices for $p$ and $S$.  Let $t$ be the tile of
$\cR^{-1}(S^2)$ as in Section~\ref{sec:defns}.  Let $p$ be a point in
the interior of $t$, and let $S$ be the generating set of $G_p$
determined by $t$ as in Section~\ref{sec:defns}.
Figure~\ref{fig:selfsim} gives a schematic diagram of a portion of the
resulting selfsimilarity complex $\zS$.  The tile $t$ is represented
by a parallelogram at the bottom, disregarding identification of
edges.  The arcs $\zl_x$ are not drawn.  In Figure~\ref{fig:selfsim},
the map $f$ has degree 4 and $t$ lifts to four parallelograms.  The
tilings of $S^2$ obtained by pulling back $t$ under $f$ and $f^2$ need
not be subdivisions of $t$.  These tilings are drawn with gray line
segments, and the edges of $\zS$ are drawn with black line segments.
A prominent property of $\zS$ illustrated by Figure~\ref{fig:selfsim}
is that edges of $\zS_n$ correspond exactly to pairs of lifts of $t$
via $f^n$ which have an edge in common.  So $\zS_n$ is the graph dual
to the 1-skeleton of $f^{-n}(t)$ for every integer $n$.

  \begin{figure} \centerline{\includegraphics{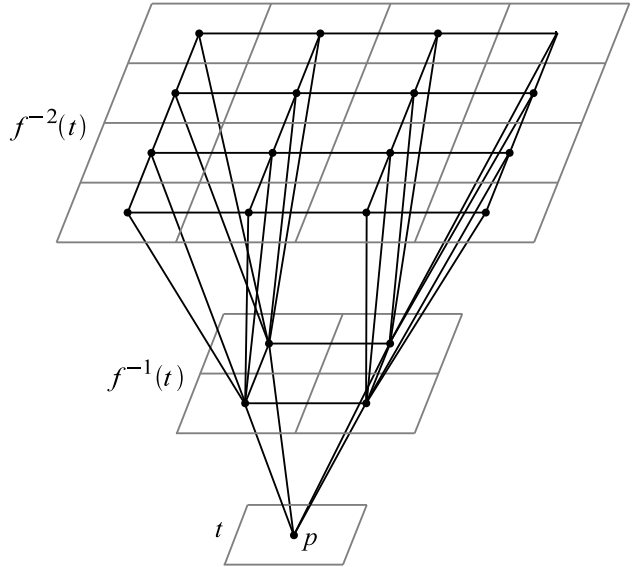}}
\caption{The basepoint $p$ in the tile $t$ of $\cR^{-1}(S^2)$, the
tilings of $S^2$ which pullback $t$ under $f$ and $f^2$ and a portion
of the selfsimilarity complex $\zS$}
\label{fig:selfsim}
  \end{figure}

Here is our result which relates selfsimilarity complexes and
subdivision graphs.

\begin{thm}\label{thm:selfsim} Let $\cR$ be a finite subdivision
rule whose subdivision map $f$ is a Thurston map.  Then every
selfsimilarity complex of $f$ is quasi-isometric to the fat path
subdivision graph of $\cR$.
\end{thm}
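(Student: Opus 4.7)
\medskip

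\noindent\textbf{Proof proposal.}
The plan is to set up a quasi-isometry based on the observation, noted in the discussion of Figure~\ref{fig:selfsim}, that vertices of $\zS_n$ correspond naturally to \emph{level-$n$ lifts of $t$}, that is, to connected components of $S^2\setminus f^{-n}(\text{tree})$, where tree denotes the 1-skeleton of $\cR^{-1}(S^2)$. Since $f$ maps this tree into itself, the level-$(n+1)$ partition of $S^2$ refines the level-$n$ partition, so each level-$(n+1)$ lift is contained in a unique level-$n$ lift. Moreover $f^n$ restricts to a homeomorphism from each level-$n$ lift onto the interior of $t$, which is a disk subdivided by $\cR^0$ into $a$ tiles (where $a$ is the number of level-$0$ tiles); hence each level-$n$ lift contains exactly $a$ tiles of $\cR^n(S^2)$. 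The bijection $A^n\leftrightarrow\{\text{level-}n\text{ lifts of }t\}$ is $w\mapsto L(w)$, where $L(w)$ is the component containing $\zL(w)$.

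With this setup I would define $\zJ\co \zG\to \zS$ on vertices by $\zJ(v(T))=w$, where $T$ is a tile of $\cR^n(S^2)$ and $L(w)\supseteq T$; at level $-1$ and level $0$ of $\zG$ all vertices go to the empty word. Each fiber of $\zJ$ has cardinality at most $a+1$ and spans a $\zG$-connected set of bounded diameter, so $\zJ$ is coarsely surjective with uniformly bounded multiplicity. Next I would show $\zJ$ is coarsely Lipschitz. A horizontal edge of $\zG_n$ joins two tiles which are either in the same lift (so $\zJ$ collapses them) or in two lifts $L,L'$ sharing an edge of $f^{-n}(\text{tree})$; in the latter case $L,L'$ differ by a single generator $s\in S$, whence the images are joined by a horizontal edge in $\zS_n$. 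A vertical edge of $\zG$ joins a tile $T$ to a subtile $T'\subseteq T$; the level-$n$ lift containing $T$ contains the level-$(n+1)$ lift containing $T'$, so the images are joined by a vertical edge in $\zS$.

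To produce a coarse inverse, I would let $\zv\co \zS\to \zG$ send $w\in A^n$ to the unique level-$n$ tile in $L(w)$ that maps homeomorphically via $f^n$ onto the level-$0$ tile containing $p$. Then $\zJ\circ\zv=\id$ on vertices and $\zv\circ\zJ$ moves each vertex of $\zG$ only within its level-$n$ lift, hence by at most $a-1$ horizontal $\zG$-edges. To see $\zv$ is coarsely Lipschitz: for a horizontal edge $w\leftrightarrow s.w$, the lifts $L(w)$ and $L(s.w)$ share an edge, so one may pick tiles $T\subseteq L(w)$ and $T'\subseteq L(s.w)$ adjacent to that shared edge, joined by a single horizontal $\zG_n$-edge, while $\zv(w)$ and $T$ are within at most $a-1$ horizontal steps inside $L(w)$ (and similarly for $\zv(s.w)$ and $T'$); for a vertical edge $w\leftrightarrow xw$, the tile $\zv(xw)$ sits inside a level-$n$ tile $T^*\subseteq L(w)$ reachable from $\zv(w)$ by at most $a-1$ horizontal $\zG_n$-edges, followed by one vertical edge to $\zv(xw)$.

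The hard part will be verifying the claim used twice above: that edge-adjacency of distinct level-$n$ lifts of $t$ in $S^2$ corresponds to the action of a single generator $s\in S$ on $A^n$, rather than to a longer word. This requires unpacking the definition of $\widetilde{S}$ from Section~\ref{sec:defns} as the set of deck transformations of $\zp\co D^*\to S^2$ pairing boundary edges of a single fixed lift $\widetilde{t}$; one must check that two lifts of $t$ in $S^2$ sharing a boundary edge lift (after choosing suitable base lifts in $D^*$) to two copies of $\widetilde{t}$ differing by exactly one element of $\widetilde{S}$, which under the identifications $A^n\leftrightarrow f^{-n}(p)$ and $G_p\cong \text{Aut}(\zp)$ translates to a single horizontal edge in $\zS_n$. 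Once this identification is in hand, combining the coarse-Lipschitz bounds with the fact that $\zv$ and $\zJ$ are coarse inverses yields the quasi-isometry.
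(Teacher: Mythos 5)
Your overall strategy matches the paper's: identify vertices of $\zS_n$ with the lifts of the tile $t$ of $\cR^{-1}(S^2)$ under $f^n$, send a tile of $\cR^n(S^2)$ to the lift of $t$ containing it, and estimate distances using the dual-graph description of $\zS_n$. The paper also uses this map (there called $\zv$, going from $V^*(\zG)$ to $V(\zS)$) and the same kinds of counting bounds. Your framing via an explicit coarse inverse rather than ``surjective with bounded fibers'' is a cosmetic difference.

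However, there is a genuine gap, and it is not the one you flagged. You write ``Since $f$ maps this tree into itself, the level-$(n+1)$ partition of $S^2$ refines the level-$n$ partition, so each level-$(n+1)$ lift is contained in a unique level-$n$ lift.'' Neither half of this is correct. The tree is a subtree of the $1$-skeleton of $\cR^0(S^2)$ containing $P_f$, and $f$ maps the $1$-skeleton of $\cR^1(S^2)$ into that of $\cR^0(S^2)$; nothing forces $f(\text{tree}) \subseteq \text{tree}$. Consequently $f^{-n}(\text{tree})$ is not in general contained in $f^{-(n+1)}(\text{tree})$, and the lifts of $t$ via $f^{n+1}$ do not refine those via $f^n$. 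The paper says this explicitly in the paragraph discussing Figure~\ref{fig:selfsim}: ``The tilings of $S^2$ obtained by pulling back $t$ under $f$ and $f^2$ need not be subdivisions of $t$.'' Your argument that a vertical $\zG$-edge $T\supseteq T'$ is sent by $\zJ$ to a vertical $\zS$-edge collapses here: the level-$(n+1)$ lift $L(w')$ containing $T'$ can cross the boundary of the level-$n$ lift $L(w)$ containing $T$, so $\zL(w')$ need not lie in $L(w)$, and $w'$ need not be any of the words $xw$. (And even if $L(w')\subseteq L(w)$ held, that containment alone would not make $w'$ a child of $w$ in $\zS$.) The paper avoids this by a counting argument: a level-$(n-1)$ lift of $t$ contains exactly $L$ tiles of $\cR^n(S^2)$, where $L$ is the number of tiles of $\cR^1(S^2)$, and each of these lies in a unique level-$n$ lift of $t$; hence the level-$(n-1)$ lift is covered by at most $L$ level-$n$ lifts, which form a connected cluster in $\zS_n$. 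Running the arc $\zl_x$ through this cluster gives $d_\zS(\zJ(v_1),\zJ(v_2))\le L$ for a vertical $\zG$-edge, a bound rather than an equality. You need something like this; the refinement shortcut is not available. By contrast, the ``hard part'' you flagged --- that edge-adjacency of level-$n$ lifts of $t$ corresponds to the action of a single generator --- is indeed the dual-graph property the paper states and uses, and it does hold; the paper treats it as visible from its Figure~\ref{fig:selfsim}, so you are not behind on that point. The actual missing piece is the vertical-edge estimate.
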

  \begin{proof} We define a selfsimilarity complex $\zS$ of $f$ as
above with the following choices.  Let $t$ be the tile of
$\cR^{-1}(S^2)$.  Let $p$ be a point of $S^2$ not in the 1-skeleton of
$\cR^0(S^2)$.  Hence $p$ is in the interior of $t$.  Let $S$ be the
generating set of $G_p$ determined by $t$.  Because $p$ is not in the
1-skeleton of $\cR^0(S^2)$, neither are the elements of $f^{-1}(p)$.
So we may choose arcs $\zl_x$ to lie in the interior of $t$.  To prove
the theorem, it suffices to prove that the fat path subdivision graph
$\zG$ of $\cR$ is quasi-isometric to $\zS$.  In turn, it suffices to
prove that the set $V^*(\zG)$ of vertices of
$\bigcup_{n=0}^{\infty}\zG_n$ is quasi-isometric to the set $V(\zS)$
of vertices of $\zS$.

The vertex set $V(\zS)$ of $\zS$ comes with a bijection from it to
$\bigcup_{n=0}^{\infty}(f^{-n}(p)\times \{n\})$.  Every element of
$f^{-n}(p)$ is contained in a unique lift of $t$ via $f^n$ for every
nonnegative integer $n$.  Thus for every element $v\in V(\zS)$, we
have a tile $\zt(v)$, which is a lift of $t$ via $f^n$ for some
nonnegative integer $n$.  Similarly, for every $v\in V^*(\zG)$, there
exists a tile $\zt(v)$ in some subdivision of $\cR^0(S^2)$.

Let $d_\zG$ and $d_\zS$ be the metrics on $\zG$ and $\zS$,
respectively.  Let $K$ be the number of tiles in $\cR^0(S^2)$, and let
$L$ be the number of tiles in $\cR^1(S^2)$.  

We define a map $\zv\co V^*(\zG)\to V(\zS)$ as follows.  Let $v\in
V^*(\zG)$.  Let $s=\zt(v)$, a tile of $\cR^n(S^2)$ for some
nonnegative integer $n$.  The interior of the tile $f^n(s)$ of
$\cR^0(S^2)$ is contained in the interior of $t$.  Hence $s$ is
contained in some lift $\widetilde{t}$ of $t$ via $f^n$.  There exists
a unique vertex $w\in \zS_n$ such that $\zt(w)=\widetilde{t}$.  We set
$\zv(v)=w$.  This defines $\zv$.

Now we begin to verify that $\zv$ is a quasi-isometry.  Let
$v_1,v_2\in V^*(\zG)$ be the endpoints of a horizontal edge of $\zG$.
Then $s_1=\zt(v_1)$ and $s_2=\zt(v_2)$ are tiles of $\cR^n(S^2)$ for
some nonnegative integer $n$.  There exist lifts $t_1$ and $t_2$ of
$t$ via $f^n$ such that $s_1\subseteq t_1$ and $s_1\subseteq t_2$.
Because $v_1$ and $v_2$ are the endpoints of a horizontal edge,
$s_1\cap s_2$ contains an edge of $\cR^n(S^2)$.  So $t_1\cap t_2$
contains an edge of $f^{-n}(t)$.  Thus the vertices $\zv(v_1)$ and
$\zv(v_2)$ of $\zS$, where $\zt(\zv(v_1))=t_1$ and
$\zt(\zv(v_2))=t_2$, are either equal or they are the endpoints of an
edge.  This proves that if $v_1$ and $v_2$ are the endpoints of a
horizontal edge of $\zG$, then $d_\zS(\zv(v_1),\zv(v_2))\le 1$.

Next suppose that $v_1,v_2\in V^*(\zG)$ are the endpoints of a
vertical edge of $\zG$.  Then after interchanging $v_1$ and $v_2$ if
necessary, we have that $s_1=\zt(v_1)$ is a tile of $\cR^{n-1}(S^2)$
and $s_2=\zt(v_2)$ is a tile of $\cR^n(S^2)$ for some positive integer
$n$.  Moreover, $s_2\subseteq s_1$.  Also, $t_1=\zt(\zv(v_1))$ is the
lift of $t$ via $f^{n-1}$ such that $s_1\subseteq t_1$ and
$t_2=\zt(\zv(v_2))$ is the lift of $t$ via $f^n$ such that
$s_2\subseteq t_2$.  Set Figure~\ref{fig:rectanglesa}.  Now choose any
arc $\zl_x$ in the definition of $\zS$, and let $\widetilde{\zl}_x$ be
the lift of $\zl_x$ to $t_1$ via $f^{n-1}$.  Let $s_3$ be a tile of
$\cR^n(S^2)$ which contains the terminal endpoint of
$\widetilde{\zl}_x$.  Let $t_3$ be the lift of $t$ via $f^n$ such that
$s_3\subseteq t_3$.  Because $t$ contains $L$ tiles of $\cR^1(S^2)$,
the tile $t_1$ contains $L$ tiles of $\cR^n(S^2)$.  Each of these
tiles is contained in a unique lift of $t$ via $f^n$.  Hence $t_1$ is
covered by at most $L$ lifts of $t$ via $f^n$.  Since $s_2\subseteq
s_1\subseteq t_1$, $s_2\subseteq t_2$, $s_3\subseteq t_1$ and
$s_3\subseteq t_3$, the vertices $w_3$ and $w_2$ of $\zS_n$ such that
$\zt(w_3)=t_3$ and $\zt(w_2)=t_2$ are connected by a horizontal edge
path which contains at most $L$ vertices.  The vertices $w_1$ and
$w_3$, where $\zt(w_1)=t_1$, are the endpoints of a vertical edge of
$\zS$.  So
  \begin{equation*}
d_\zS(\zv(v_1),\zv(v_2))  =d_\zS(w_1,w_2)\le
d_\zS(w_1,w_3)+d_\zS(w_3,w_2)\le L.
  \end{equation*}

  \begin{figure}
\centerline{\includegraphics{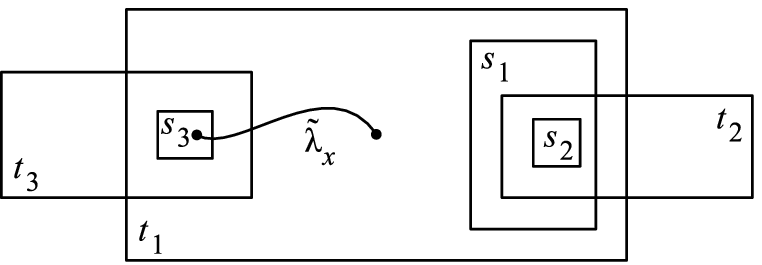}}  \caption{Proving that
$d_\zS(\zv(v_1),\zv(v_2))\le L$}
\label{fig:rectanglesa}
  \end{figure}

We have proved that if $v_1,v_2\in V^*(\zG)$ are the endpoints of an
edge, then $d_\zS(\zv(v_1),\zv(v_2))\le L=Ld_\zG(v_1,v_2)$.  It follows
that if $u,v\in V^*(\zG)$, then $d_\zS(\zv(u),\zv(v))\le
Ld_\zG(u,v)$.  This is one half of what is needed to prove that $\zv$
is a quasi-isometry.

To begin the other half of the proof, let $v_1\in V(\zS)$.  We
consider the set $\zv^{-1}(v_1)$.  The tile $t_1=\zt(v_1)$ is a lift
of $t$ via $f^n$ for some nonnegative integer $n$.  By definition,
$\zv^{-1}(v_1)$ consists of all vertices $v$ of $\zG_n$ such that
$\zt(v)$ is a tile of $\cR^n(S^2)$ which is contained in $t_1$.  In
particular, $\zv$ is surjective.  Moreover, the $d_\zG$-distance
between such vertices of $\zG_n$ is at most $K$.

Now let $v_1,v_2\in V^*(\zG)$ such that $\zv(v_1)$ and $\zv(v_2)$ are
the endpoints of a horizontal edge of $\zS$.  Let $n$ be the
nonnegative integer and let $t_1$ and $t_2$ be the lifts of $t$ via
$f^n$ such that $t_1=\zt(\zv(v_1))$ and $t_2=\zt(\zv(v_2))$.  Then
$s_1=\zt(v_1)$ and $s_2=\zt(v_2)$ are tiles of $\cR^n(S^2)$ such that
$s_1\subseteq t_1$ and $s_2\subseteq t_2$.  Because $\zv(v_1)$ and
$\zv(v_2)$ are the endpoints of a horizontal edge, $t_1\cap t_2$
contains an edge of $f^{-n}(t)$.  It easily follows that
$d_\zG(v_1,v_2)\le 2K\le K+L$.

Finally, let $v_1,v_2\in V^*(\zG)$ such that $\zv(v_1)$ and $\zv(v_2)$
are the endpoints of a vertical edge of $\zS$.  Then after
interchanging $v_1$ and $v_2$ if necessary, we have that
$s_1=\zt(v_1)$ is a tile of $\cR^{n-1}(S^2)$ and $s_2=\zt(v_2)$ is a
tile of $\cR^n(S^2)$ for some positive integer $n$.  Let
$t_1=\zt(\zv(v_1))$ and $t_2=\zt(\zv(v_2))$.  Then $s_1\subseteq t_1$
and $s_2\subseteq t_2$.  See Figure~\ref{fig:rectanglesb}.  Because
$\zv(v_1)$ and $\zv(v_2)$ are the endpoints of a vertical edge of
$\zS$, there exists an arc $\zl_x$ whose lift $\widetilde{\zl}_x$ to
$t_1$ via $f^{n-1}$ has an endpoint in $t_2$.  Let $s_3$ be a tile of
$\cR^n(S^2)$ which contains this endpoint of $\widetilde{\zl}_x$, and
let $v_3$ be the vertex of $\zG_n$ such that $\zt(v_3)=s_3$.  Let
$s_4$ be a tile of $\cR^n(S^2)$ contained in $s_1$, and let $v_4$ be
the vertex of $\zG_n$ such that $\zt(v_4)=s_4$.  Then $v_1$ and $v_4$
are the vertices of a vertical edge of $\zG$, and so
$d_\zG(v_1,v_4)=1$.  Since $s_4\subseteq t_1$ and $s_3\subseteq t_1$,
as in the argument which involves Figure~\ref{fig:rectanglesa}, we
have that $d_\zG(v_4,v_3)<L$.  Since $s_3\subseteq t_2$ and
$s_2\subseteq t_2$, we have that $d_\zG(v_3,v_2)<K$.  So the triangle
inequality implies that $d_\zG(v_1,v_2)\le K+L$.

  \begin{figure} 
\centerline{\includegraphics{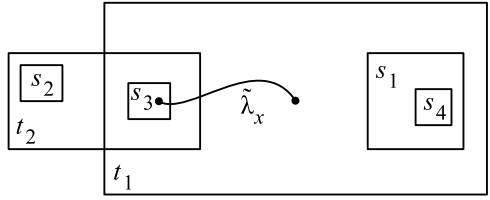}}  \caption{Proving that
$d_\zG(v_1,v_2)\le K+L$}
\label{fig:rectanglesb}
  \end{figure}

Combining the last three paragraphs with the triangle inequality shows
that if $u,v\in V^*(\zG)$, then $d_\zG(u,v)\le
(K+L)d_\zS(\zv(u),\zv(v))+K$.  This completes the proof of
Theorem~\ref{thm:selfsim}.

\end{proof}

\begin{thm}\label{thm:julia} Suppose that $f$ is a rational map which
is the subdivision map of a finite subdivision rule $\cR$.  Then $\cR$
is graph hyperbolic, and the boundary of its fat path subdivision
graph is homeomorphic to the Julia set of $f$.
\end{thm}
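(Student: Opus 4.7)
The plan has two parts, corresponding to the two conclusions. For the first---that $\cR$ is graph hyperbolic---everything has already been prepared: since $f$ is rational, Theorem \ref{thm:ratcntrg} yields that $\cR$ is contracting, and then Theorem \ref{thm:cntrghpbc} gives that $\cR$ is graph hyperbolic and hence that the fat path subdivision graph $\zG$ is Gromov hyperbolic.

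For the second conclusion, the identification $\partial\zG \cong J_f$, my plan is to route through the selfsimilarity complex. By Theorem \ref{thm:selfsim}, $\zG$ is quasi-isometric to any selfsimilarity complex $\zS$ of $f$. Both are Gromov hyperbolic, so by quasi-isometry invariance of the Gromov boundary, $\partial\zG \cong \partial\zS$. This reduces matters to identifying $\partial\zS$ with $J_f$. Since $f$ is a PCF rational map, the orbifold fundamental group virtual endomorphism $\zf$ is contracting (by Theorem \ref{thm:cntrn} together with Theorem \ref{thm:ratcntrg}, or directly by Theorem 6.4.4 of \cite{N}). Nekrashevych's theory then identifies $\partial\zS$ with the limit space of the iterated monodromy group of $f$ and identifies this limit space canonically with the Julia set $J_f$.

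The main obstacle is this last identification: one must carefully invoke Nekrashevych's machinery so that the orbifold weights, and the cusps adjoined in constructing $D^*$, do not disrupt the canonical homeomorphism with $J_f$. An alternative route that avoids citing a deep external result is to apply Theorem \ref{thm:rushtonb} directly, so that $\partial\zG \cong X/\sim$, and then check by hand that for a PCF rational map the removed open stars of ideal vertices in $\widehat{\cR}^n(S^2)$ (across all levels $n$) exhaust precisely the periodic super-attracting Fatou components---centered at the branch points in $P_f^\infty$---while the equivalence relation $\sim$ further collapses each strictly preperiodic Fatou component (which by postcritical finiteness maps under some iterate of $f$ onto a periodic component) to a single point of the boundary of $J_f$. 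Under this analysis the quotient $X/\sim$ is naturally identified with the complement of the Fatou set, namely $J_f$. The delicate technical step in this alternative route is controlling tile behavior uniformly over the entire Fatou set, not merely in a neighborhood of the periodic branch points; the contracting hypothesis on $\cR$, together with standard local-connectivity results for Julia sets of PCF rational maps, is what makes this control possible.
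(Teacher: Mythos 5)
Your primary route matches the paper's proof exactly: first Theorem~\ref{thm:ratcntrg} then Theorem~\ref{thm:cntrghpbc} give graph hyperbolicity, and then Theorem~\ref{thm:selfsim} together with Theorem 6.4.4 of \cite{N} identifies $\partial\zG$ with the Julia set via the selfsimilarity complex. The paper simply cites Nekrashevych's Theorem 6.4.4 for the final identification without the caveats you raise, and it does not pursue your alternative route through Theorem~\ref{thm:rushtonb}.
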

  \begin{proof} Theorem~\ref{thm:ratcntrg} implies that $\cR$ is
contracting.  So Theorem~\ref{thm:cntrghpbc} implies that $\cR$ is
graph hyperbolic.  Theorem~\ref{thm:selfsim} implies that the fat path
subdivision graph of $\cR$ is quasi-isometric to the selfsimilarity
complex $\zS$ of $f$.  Finally, Theorem 6.4.4 of Nekrashevych's book
\cite{N} implies that the boundary of $\zS$ is homeomorphic to the
Julia set of $f$.

\end{proof}

\begin{ex}\label{ex:carpet} In this example we construct a Thurston
map which is not rational such that its orbifold fundamental group
virtual endomorphism is contracting and the boundary of its
selfsimilarity complex is a Sierpinski carpet.  We begin with a
general discussion.

Let $\cR$ be a combinatorially expanding finite subdivision rule whose
subdivision map $f$ is a Thurston map.  Statement 3 of
Theorem~\ref{thm:expsep} shows that $\cR$ is weakly isomorphic to a
finite subdivision rule whose mesh approaches 0.  Replacing $\cR$ by a
weakly isomorphic finite subdivision rule does not change the isometry
type of it fat path subdivision graph $\zG$ or the homeomorphism type
of $\partial \zG$.  So we assume that the mesh of $\cR$ approaches 0.
Corollary~\ref{cor:cmblexpn} implies that $\cR$ is contracting.  This
and Theorem~\ref{thm:cntrghpbc} imply that $\zG$ is Gromov hyperbolic.
The discussion after Lemma~\ref{lemma:tilde} describes $\partial \zG$.
So $\partial \zG$ is a subspace of $S^2$ gotten by deleting an open
topological disk about every vertex of $\cR^n(S^2)$ whose valences are
unbounded under subdivision for every nonnegative integer $n$.
Finally, Theorem~\ref{thm:selfsim} shows that the selfsimilarity
complex $\zS$ of $f$ is quasi-isometric to $\zG$.  So the above
describes $\partial \zS$.

We are interested in the case in which $\cR$ does not have bounded
valence.  The assumption of unbounded valence and the fact that the
mesh of $\cR$ approaches 0 imply that $\partial \zG$ and, hence
$\partial \zS$, is the complement in $S^2$ of the union of infinitely
many disjoint open topological disks.  Because the mesh of $\cR$
approaches 0, the diameters of these open disks tend to 0.  By a
theorem of Whyburn \cite{Wh}, any two such spaces are ambiently
homeomorphic by an orientation-preserving homeomorphism of $S^2$.
This means that $\partial \zS$ is a Sierpinski carpet.

Now we construct such a finite subdivision rule $\cR$ and Thurston map
$f$.  Figure~\ref{fig:blowupa} describes $\cR$.  The 1-skeleton of
$\cR^0(S^2)$ is a simple closed curve subdivided into four edges $a$,
$b$, $c$, $d$.  Hence $\cR^0(S^2)$ has two quadrilateral tiles, $t_1$
and $t_2$.  One verifies that Figure~\ref{fig:blowupa} does indeed
describe a finite subdivision rule.  It is easy to see that $\cR$ is
combinatorially expanding.  So Corollary~\ref{cor:cmblexpn} and
Theorem~\ref{thm:cntrn} imply that the orbifold fundamental group
virtual endomorphism of $f$ is contracting.  The four vertices of
$\cR^0(S^2)$ form the postcritical set of $f$, and they all map to the
vertex in the lower left corner of $t_1$.  Moreover, this vertex is a
critical point of $f$.  So its valences are unbounded under
subdivision. It follows that the orbifold of $f$ is hyperbolic.  A
simple closed curve in $S^2\setminus P_f$ which is vertical relative
to the drawing of $t_1$ and $t_2$ in Figure~\ref{fig:blowupa} is a
Thurston obstruction.  So $f$ is not rational.

  \begin{figure}
\centerline{\includegraphics{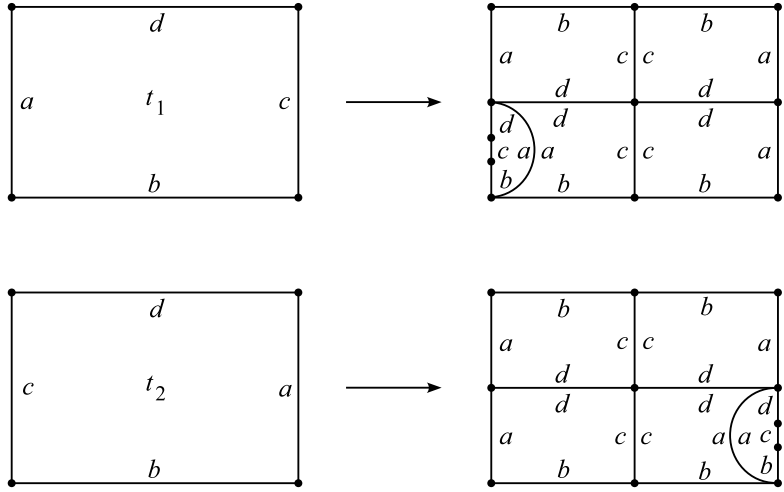}}  \caption{The finite
subdivision rule of Example~\ref{ex:carpet}}
\label{fig:blowupa}
  \end{figure}

In conclusion, $f$ is a Thurston map which is not equivalent to a
rational map, its orbifold fundamental group virtual endomorphism is
contracting and the boundary of its selfsimilarity complex is a
Sierpinski carpet.

\end{ex}

\section{Levy obstructions}\nosubsections
\label{sec:levy}

A \defn{Levy cycle} for a Thurston map $f$ is a multicurve
$\{\gamma_1,\dots,\gamma_k\}$ such that for each $i\in \{1,\dots,k\}$
$\gamma_i$ is homotopic rel $P_f$ to a component of
$f^{-1}(\gamma_{i+1})$ (where the subscripts are modulo $k$) which
maps to $\gamma_{i+1}$ with degree one. In \cite{L}, Levy proves that
if a Thurston map has degree two and has a Thurston obstruction, then
it has a Levy cycle. In \cite{BFH}, Bielefeld, Fisher, and Hubbard
prove that if a topological polynomial has a Thurston obstruction,
then the Thurston obstruction contains a Levy cycle. If a Thurston map
$f$ has a Levy cycle, then its fundamental group virtual endomorphism
$\zf$ is not contracting.

One can generalize the definition of a Levy cycle as follows and still
get an obstruction to the contraction of $\zf$.  A \defn{Levy
obstruction} for a Thurston map $f$ is an essential, nonperipheral,
simple closed curve $\gamma$ in $S^2 \setminus P_f$ for which there
exists a positive integer $n$ such that there is a degree 1 lift of
$\zg$ via $f^n$ to a simple closed curve which is homotopic to $\zg$
rel $P_f$.  Note that every element of a Levy cycle is a Levy
obstruction.

The following theorem motivates our interest in Levy
obstructions.

\begin{thm}\label{thm:levy} Let $f\co S^2\to S^2$ be a Thurston
map with orbifold fundamental group virtual endomorphism $\zf$.  If
$f$ admits a Levy obstruction, then $\zf$ is not contracting.
\end{thm}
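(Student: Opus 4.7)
The plan is to exhibit a single element of $G_p$ whose $\zf^n$-iterate is a conjugate of itself in $G_p$, and then to leverage this to force $\zr_\zf\ge 1$. Concretely, I would base at a point $p$ lying on the Levy obstruction curve $\zg$, let $g\in G_p$ be the class represented by $\zg$, and choose the auxiliary data defining $\zf$ so that the iterated endpoint $r$ of $\zb_1\cdots \zb_n$ lies on the distinguished degree-$1$ lift $\widetilde{\zg}$. With these choices the lift of $\zg$ based at $r$ via $f^n$ is exactly $\widetilde{\zg}$, which is closed; hence $g\in\text{dom}(\zf^n)$, and by the explicit formula for iterates of $\zf$ recalled in Section~\ref{sec:endo}, $\zf^n(g)$ is represented by the loop $\zb_1\cdots \zb_n\cdot \widetilde{\zg}\cdot \zb_n^{-1}\cdots \zb_1^{-1}$ at $p$.

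The hypothesis that $\widetilde{\zg}$ is homotopic to $\zg$ rel $P_f$ is precisely a free homotopy of unbased closed curves in $S^2\setminus P_f$, so conjugating $\widetilde{\zg}$ into a loop at $p$ by the path $\zb_1\cdots \zb_n$ produces a class conjugate in $G_p$ to $g^{\pm 1}$; thus $\zf^n(g)=kg^{\pm 1}k^{-1}$ for some $k\in G_p$. Since $\zg$ is essential and nonperipheral, $g$ is not conjugate in $G_p$ to a peripheral or inessential element, so by the torsion characterization for the $F$-group $G_p$ cited in Section~\ref{sec:endo} (Proposition~6.2 of \cite{LS}), $g$ has infinite order, and in particular $\|g^m\|\to\infty$. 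Because $\zf^n(g^m)=(\zf^n(g))^m=kg^{\pm m}k^{-1}$, the triangle inequality yields $\|\zf^n(g^m)\|\ge \|g^m\|-2\|k\|$, and therefore the inner limit supremum $L_n:=\limsup_{\|h\|\to\infty}\|\zf^n(h)\|/\|h\|$ satisfies $L_n\ge 1$.

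To pass from $L_n\ge 1$ for this single $n$ to $\zr_\zf\ge 1$, I would argue that Levy obstructions propagate to all multiples of $n$. Since $[\widetilde{\zg}]=[\zg]$ as isotopy classes of simple closed curves in $S^2\setminus P_f$, iteratively pulling back through $f^n$ along the degree-$1$ correspondence produces, for each $j\ge 1$, a degree-$1$ lift of $\zg$ via $f^{jn}$ freely homotopic to $\zg$ rel $P_f$. Running the argument of the previous two paragraphs for the exponent $jn$ gives $L_{jn}\ge 1$, so $L_{jn}^{1/(jn)}\ge 1$ for every $j$; taking the limit supremum yields $\zr_\zf\ge 1$, and hence $\zf$ is not contracting by Proposition~2.11.11 of~\cite{N}. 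The trickiest step to justify carefully will be the second one---translating the free homotopy between $\widetilde{\zg}$ and $\zg$ into the precise conjugacy statement $\zf^n(g)=kg^{\pm 1}k^{-1}$, while tracking basepoints and a possible orientation reversal, and confirming that iteration of the degree-$1$ self-correspondence indeed composes to a degree-$1$ self-correspondence under $f^{jn}$ for every $j$.
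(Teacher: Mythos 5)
Your argument is correct, and its core idea matches the paper's: the Levy curve class $g\in G_p$ has infinite order and is essentially fixed by $\zf^n$, so the lengths of the powers $g^m$ defeat contraction. But the packaging is genuinely different. The paper replaces $f$ by $f^n$ at the outset, places the basepoint $p$ on $\zg$ and the marked preimage $q$ on the degree-one lift $\zd$, and lifts the free homotopy from $\zd$ to $\zg$ to the universal cover of $S^2\setminus P_f$, showing that a single deck transformation carries the endpoints of both lifted curves. That yields the exact identity $\zf(g)=g$ with no conjugating element, from which $\zf^m(g)=g$ for every $m$ is automatic and $\zr_\zf\ge 1$ follows immediately. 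Your route stops at the conjugacy $\zf^n(g)=kg^{\pm 1}k^{-1}$, which only controls the inner $\limsup$ at the single level $n$, and you compensate by propagating the degree-one self-correspondence geometrically to every $f^{jn}$ (lifting the isotopy between $\zd$ and $\zg$ through the covering $f^n$), giving the needed bound along the subsequence $jn$. This works, but two points you flag as tricky do need to be spelled out for the argument to close cleanly: for each $j$ you must re-choose the auxiliary data $(q,\zb)$ so that the iterated endpoint $r$ lands on the $j$th Levy curve, and you must then invoke invariance of the inner $\limsup$ under pre- and post-composing $\zf^{jn}$ with inner automorphisms---precisely the maneuver the paper makes explicit in the proof of Theorem~\ref{thm:cntrn}. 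The trade-off: the paper's one careful lifting argument eliminates the conjugating element once and for all; your version avoids that lifting at the cost of the propagation step and the invariance bookkeeping.
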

  \begin{proof} Suppose that $f$ admits a Levy obstruction.  Then
there exists a simple closed curve $\zg$ in $S^2\setminus P_f$ such
that $\zg$ is neither inessential nor peripheral and there exists a
positive integer $n$ such that there is a degree 1 lift of $\zg$ via
$f^n$ to a simple closed curve $\zd$ which is homotopic to $\zg$ rel
$P_f$.  To prove that $\zf$ is not contracting, it suffices to prove
that the orbifold fundamental group virtual endomorphism of $f^n$ is
not contracting.  This is what we do.  In effect, we replace $f$ by
$f^n$, so that $f(\zd)=\zg$.  We choose $p\in \zg$, $q\in \zd$ with
$f(q)=p$ and a path $\zb$ from $p$ to $q$, the data used to define
$\zf$.  

Let $X$ be the universal covering space of $S^2\setminus P_f$.  We
lift to $X$.  Let $\widetilde{p}$ be a lift of $p$.  Let
$\widetilde{\zg}$ and $\widetilde{\zb}$ be lifts of $\zg$ and $\zb$ to
paths based at $\widetilde{p}$.  Then $\widetilde{\zb}$ ends at a
point $\widetilde{q}$ which lifts $q$.  Let $\widetilde{\zd}$ be a
lift of $\zd$ based at $\widetilde{q}$.

A homotopy $H\co [0,1]\times [0,1]\to S^2\setminus P_f$ from $\zd$ to
$\zg$ also lifts to a homotopy $\widetilde{H}$ from $\widetilde{\zd}$
to $\widetilde{\zg}$.  The homotopy $H$ takes $\zd$ to $\zg$ through a
family of closed curves.  If $\widetilde{\za}$ is a lift of such a
closed curve $\za$, then there exists a deck transformation which
takes the initial endpoint of $\widetilde{\za}$ to its terminal
endpoint.  Because the universal covering map is a local
homeomorphism, these deck transformations are equal for sufficiently
close curves.  This and a compactness argument imply that there exists
a deck transformation $\zs$ such that $\zs(\widetilde{p})$ is the
terminal endpoint of $\widetilde{\zg}$ and $\zs(\widetilde{q})$ is the
terminal endpoint of $\widetilde{\zd}$.  So $\zs\circ \widetilde{\zb}$
is a lift of $\zb$ from the terminal endpoint of $\widetilde{\zg}$ to
the terminal endpoint of $\widetilde{\zd}$.  So
$\widetilde{\zb}\widetilde{\zd}(\zs\circ
\widetilde{\zb})^{-1}\widetilde{\zg}^{-1}$ lifts $\zb \zd
\zb^{-1}\zg^{-1}$ and shows that $\zb \zd \zb^{-1}\zg^{-1}$ is
homotopic to the constant path at $p$ in $S^2\setminus P_f$.

Hence if $g\in G_p$ is the element represented by $\zg$, then $g\in
\text{dom}(\zf)$ and $\zf(g)=g$.  So $g\in \text{dom}(\zf^n)$ and
$\zf^n(g)=g$ for every positive integer $n$.  Since $\zg$ is neither
inessential nor peripheral, $g$ has infinite order in $G_p$.  So the
lengths of the powers of $g$ become arbitrarily large relative to any
finite generating set of $G_p$.  Since they are all fixed by all
powers of $\zf^n$, it follows that $\zf$ is not contracting.

\end{proof}

Consider the finite subdivision rule $\cR_3$ of
Example~\ref{ex:quad15}.  It was observed there that a horizontal
curve gives a Thurston obstruction.  This Thurston obstruction is even
a Levy obstruction.  So Theorems~\ref{thm:cntrn} and \ref{thm:levy}
imply that $\cR_3$ is not contracting.  On the other hand, it is not
hard to verify that $\cR_3$ is graph hyperbolic.  In particular, if a
curve in $S^2\setminus P_f$ is a taut $\zd_m$-geodesic for some
nonnegative integer $m$ with more than 4 segments, then it cannot have
equal level $m$ and level $m+1$ decompositions.

A simpler example is shown in Figure~\ref{fig:blowupb}.  In this case
a vertical curve gives a Levy obstruction.

  \begin{figure}
\centerline{\includegraphics{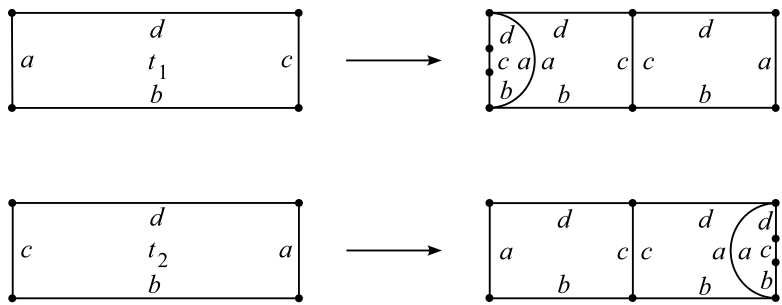}}  \caption{A finite
subdivision rule which is graph hyperbolic but not contracting.}
\label{fig:blowupb}
  \end{figure}

Now suppose that $\cR$ is a finite subdivision rule whose subdivision
map is a Thurston map.  In practice it is easy to check whether $\cR$
is contracting or graph hyperbolic if only because the finite
subdivision rules one encounters in practice are so simple.  However, the 
existence and effectiveness of general algorithms is as yet unclear. 

For general Thurston maps $f$, systematic enumeration of curves will
detect a Levy obstruction, if one exists.  Also, systematic
enumeration of candidate nuclei for associated wreath recursions will
detect if $\phi_f$ is contracting \cite{N}.  But as of this writing,
we do not know if absence of Levy obstructions (and still assuming the
map is not one of the exceptional Latt\`es examples) is sufficient to
prove $\phi_f$ is contracting. In the present context of subdivision
maps of finite subdivision rules, we may ask similar questions.

\begin{enumerate}
  \item Is there an effective algorithm which begins with a finite
subdivision rule $\cR$ associated to a Thurston map and determines
whether or not it is contracting?
  \item Is there an effective algorithm which begins with a finite
subdivision rule $\cR$ associated to a Thurston map and determines
whether or not it is graph hyperbolic?
  \item Is there a Levy obstruction if $\cR$ is not contracting?
  \item Is there a Levy obstruction if $\cR$ is not graph hyperbolic?
\end{enumerate}

\end{document}